\numberwithin{equation}{section}
\newtheorem{theorem}{Theorem}
\newtheorem{proposition}[theorem]{Proposition}
\newtheorem{corollary}[theorem]{Corollary}
\newtheorem{lemma}[theorem]{Lemma}
\theoremstyle{definition}
\newtheorem{remark}[theorem]{Remark}
\def\beq{\begin{equation}}
\def\beql#1{\beq\label{#1}}
\def\eeq{\end{equation}}
\def\beqa{\beq\begin{aligned}}
\def\beqal#1{\beql{#1}\begin{aligned}}
\def\eeqa{\end{aligned}\eeq}
\def\bseq{\begin{subequations}}
\def\bseql#1{\begin{subequations}\label{#1}}
\def\eseq{\end{subequations}}
\def\deq#1{(\ref{#1})}
\def\rb#1{\left(#1\right)}
\def\frb#1{\!\rb{#1}}
\def\brb#1{\big(#1\big)}
\def\Brb#1{\Big(#1\Big)}
\def\curb#1{\left\{#1\right\}}
\def\abs#1{\left|#1\right|}
\def\angb#1{\left\langle #1 \right\rangle}
\def\veps{\varepsilon}
\def\G{\Gamma}
\def\g{\gamma}
\def\lam{\lambda}
\def\Om{\Omega}
\def\del{\delta}
\def\vphi{\varphi}
\def\wt{\widetilde}
\def\ol{\overline}
\def\R{\mathbb{R}}
\def\cB{\mathcal{B}}
\def\cH{\mathcal{H}}
\def\cJ{\mathcal{J}}
\def\cL{\mathcal{L}}
\def\cT{\mathcal{T}}
\def\cV{\mathcal{V}}
\def\sbt{\subset}
\def\p{\partial}
\def\what{\widehat}
\def\dvg{\operatorname{\nabla\cdot}}
\def\Span{\mathop{\mathrm{span}}}
\def\dim{\operatorname{dim}}
\newcommand{\asserRef}[1]{\noindent\ref{#1}.\ \ }
\newcommand{\dist}{\operatorname{dist}}
\newcommand{\argmin}{\operatorname*{argmin}}
\newcommand{\uTotal}{u}            
\newcommand{\uTotalMin}{\uTotal_*}            
\newcommand{\uMedium}{\widetilde{u}}                
\newcommand{\uBG}{u^0}                  
\newcommand{\y}{y}                      
\newcommand{\yObs}{y^{\text{obs}}}      
\newcommand{\supp}[1]{\operatorname{supp}\!\left(#1\right)}
\newcommand{\PsiDim}{J}
\begin{document}


\title{Adaptive Spectral Decompositions \\ For Inverse Medium Problems}
%
\author{%
		Daniel H.\ Baffet\,%
			\footnotemark[1]\, \footnotemark[2]\qquad
		Marcus J.\ Grote\,%
			\footnotemark[1]\, \footnotemark[3]\qquad
		Jet Hoe Tang\,%
			\footnotemark[1]\, \footnotemark[4]
		}


\renewcommand{\thefootnote}{\fnsymbol{footnote}}
\footnotetext[1]{Department of Mathematics and Computer Science,
		University of Basel, Basel, Switzerland}
\footnotetext[2]{daniel.baffet@unibas.ch}
\footnotetext[3]{marcus.grote@unibas.ch}
\footnotetext[4]{jet-hoe.tang@univ-grenoble-alpes.fr}

\maketitle
\markboth{}
	{Adaptive Spectral Decompositions For Inverse Medium Problems}

\begin{abstract}
Inverse medium problems involve the reconstruction of a spatially varying unknown medium from available observations by exploring a restricted search space of possible solutions.
Standard grid-based representations are very general but all too often computationally prohibitive due to the high dimension of the search space.
{\it Adaptive spectral (AS) decompositions} instead expand the unknown medium in a basis of eigenfunctions of a judicious elliptic operator, which depends itself on the medium.
Here the AS decomposition is combined with a standard inexact Newton-type method for the solution of time-harmonic scattering problems governed by the Helmholtz equation.
By repeatedly adapting both the eigenfunction basis and its dimension, the resulting adaptive spectral inversion (ASI) method substantially reduces the dimension of the search space during the nonlinear optimization.
Rigorous estimates of the AS decomposition are proved for a general piecewise constant medium.
Numerical results illustrate the accuracy and efficiency of the ASI method for time-harmonic inverse scattering problems, including a salt dome model from geophysics.

\bigskip
\noindent
\textbf{Keywords:} {Adaptive eigenspace inversion, AEI, full waveform inversion, inverse scattering problem, total variation regularization}
\end{abstract}



%
%


%
%
%
%
%
%
%
%
%
%
%
%
%
%
%
%
\section{Introduction}
Inverse medium problems occur in a wide range of applications such as
medical imaging, geophysical exploration and non-destructive
testing. Given observations of a physical state variable, $y$, from the boundary
of a bounded region $\Omega$, one seeks to reconstruct (unknown)
spatially varying medium properties, $u(x)$, inside $\Omega$. In inverse scattering
problems, for instance, $u$ characterizes the location, shape or physical properties of the scatterer, typically a collection of bounded or penetrable inclusions, 
while the scattered wave field $y$ satisfies the governing  (time-dependent or time-harmonic) wave equation. In seismic imaging, in particular, 
full-waveform inversion reconstructs high-resolution subsurface models of the medium parameters $u$ (e.g. spatially varying sound speed) from reflected seismic waves at the Earth's surface  \cite{OV2009}.
To determine $u(x)$ from boundary measurements of $y$, the
inverse medium problem is usually reformulated as a PDE-constrained optimization
problem for a cost functional $\cJ[u]$, which measures the misfit
between the simulated and the observed data \cite{Tarantola:1943:ISR,Haber:2000:OOT}.

Two difficulties one must address when solving an inverse medium problem numerically are the well-posedness of the problem and the size of the set of candidate functions for $u$.
It is well-known that the problem of minimizing the misfit over $L^\infty(\Om)$ is, in general, ill-posed.
To obtain a well-posed problem, the misfit functional is typically modified by adding a Tikhonov-type regularization term \cite{Tikhonov:1943:SIP,EHN2000}.
Then the resulting formulation may be discretized and solved numerically \cite{Metivier:2013:FWI,Grote:2014:IIP}. 
Although Tikhonov regularization generally improves the stability of the inversion,
it does not address the cost of solving an optimization problem in a subspace of high dimension, still determined by the number of degrees of freedom of the spatial numerical discretization. 

When a parametrization of the unknown medium $u(x)$ is explicitly known a priori, 
the inversion can easily be limited to a reduced set of unknown parameters. Moreover, 
if the search space is of sufficiently low dimension, the discretization itself can have a regularizing effect \cite{Chavent,KO2012}. In general, however, such a low-dimensional representation
is not explicitly known a priori. 

To resolve the vexing dilemma of remaining sufficiently general while keeping the dimension of the search space sufficiently small, various sparsity promoting strategies were proposed in recent years. 
In \cite{DDM2004}, Daubechies, Defrise and De Mol considered linear inverse problems yet 
replaced the usual quadratic regularizing penalty by weighted $\ell_p$-penalties on the coefficients 
to promote a sparse expansion of $u$ with respect to an orthonormal basis. 
Later, Loris et al. \cite{LDNDR2010} successfully applied $\ell_1$-norm
soft-thresholding to promote a sparse wavelet representation of $u$ for seismic tomography. 
Similarly, a truncated wavelet representation of the acoustic velocity and 
mass density was used for seismic FWI in \cite{LAH2012}. 
In \cite{HH2008}, a curvelet-based representation of the wave field $y$ was used 
for seismic data recovery from a regularly sampled grid with traces missing.
Recently, Kadu, van Leeuwen and Mulder \cite{KLM2017}
combined a level-set representation, constructed from radial basis functions, with a Gauss-Newton approximation to regularize FWI in the presence of salt bodies.

%

In \cite{BO2010}, de Buhan and Osses proposed to restrict the search space to the span 
of a small basis of eigenfunctions of a judicious elliptic operator,
 repeatedly adapted during the nonlinear iteration. Their approach relies on a decomposition
\begin{equation}\label{eq:wExpansion}
	w=\vphi_0+\sum_{k=1}^\infty \beta_k\vphi_k ,
\end{equation}
for functions $w\in W^{1,\infty}(\Om)$.
Here $\vphi_0$ satisfies the elliptic boundary-value problem
\begin{equation}\label{eq:phi0BVP}
	L_\veps[w]\vphi_0 =0
	\quad \text{in $\Omega$,}
	\qquad
        \vphi_0 = w
	\quad \text{on $\partial\Omega$,}
\end{equation}
and for $k\ge 1$, each $\vphi_k$ is an eigenfunction of a $w$-dependent, linear, symmetric, and elliptic operator $L_\veps[w]$, that is, $\vphi_k$ satisfies
\begin{equation}\label{eq:eigenValProb}
	L_\veps[w]\vphi_k =\lam_k \vphi_k
	\quad \text{in $\Omega$,}
	\qquad
	\vphi_k =0
        \quad \text{on $\partial\Omega$,}
\end{equation}
for an eigenvalue $\lam_k\in\R$. The eigenvalues  $(\lam_k)_{k\ge 1}$ form a nondecreasing sequence, with each eigenvalue repeated according to its multiplicity,
and the set $(\vphi_k)_{k\ge 1}$ is an orthonormal basis of $L^2(\Om)$.
%
Henceforth we shall refer to \deq{eq:wExpansion} as the \emph{adaptive spectral (AS) expansion or decomposition}.

The AS decomposition has been used in various iterative Newton-like inversion algorithms \cite{BK2013,GKN2017,GN2019} as follows:
Given an approximation of the medium, $u^{(m-1)}$, from the previous iteration, the approximation
 $u^{(m)}$ at the current iteration is set as the minimizer of the misfit functional $\cJ[u]$ in the affine space $\vphi_0 + \Span(\vphi_k)_{k=1}^K$, where $\vphi_0$ and $\vphi_k$, $k=1,\ldots,K$, satisfy \deq{eq:phi0BVP} and \deq{eq:eigenValProb} with $w=u^{(m-1)}$, respectively. As the approximation $u^{(m)}$ changes from
 one iteration to the next, so does the affine search space used for the subsequent minimization.

Clearly, the choice of  $L_\veps[w]$ is crucial in obtaining an efficient approximation of the medium
with as few basis functions as possible. The particular choice for $L_\veps[w]$ used in \cite{BO2010}, which
essentially coincides with the linearization of the gradient of the penalized total variation (TV) functional~\cite{GKN2017}, yields a remarkably efficient approximation for piecewise constant functions.
Although $L_\veps[\uTotal]$ is not well-defined for piecewise constant $\uTotal$, numerical discretizations
of  \deq{eq:phi0BVP} or \deq{eq:eigenValProb} can be viewed as approximations with $w$ replaced by a more regular approximation of $\uTotal$, such as a projection of $\uTotal$ on an $H^1$-conforming finite element (FE) space. The AS decomposition also bears a striking similarity to 
nonlinear eigenproblems for the (subdifferential of the) TV functional and, more generally, for one homogeneous functionals used in image processing -- see \cite{BGM2016,BCEGM2016,BCN2002} and the references therein. Depending on a priori available information about the smoothness or spatial anisotropy of the medium, different choices for $L_\veps[w]$ will
yield more or less efficient AS representations \cite{GN2019}.

By combining the adaptive inversion process with the TRAC (time reversed absorbing condition) approach, de Buhan and Kray~\cite{BK2013} developed an effective solution strategy for time-dependent inverse scattering problems.
In~\cite{GKN2017}, Grote, Kray and Nahum proposed the AEI (adaptive eigenspace inversion) algorithm for inverse scattering problems in the frequency domain. It combines the AS decomposition 
with frequency stepping and truncated inexact Newton-like methods \cite{Pratt:1998:GNF,Metivier:2013:FWI}, and may also be used without Tikhonov regularization by progressively increasing the dimension $K$ of the spectral basis with frequency. In \cite{GN2019}, the AEI algorithm was extended to multi-parameter inverse medium
problems, including the well-known layered Marmousi subsurface model from geosciences. 
Recently, it was extended to electromagnetic inverse scattering problems at fixed frequency \cite{BD2017} and also to time-dependent
inverse scattering problems when the illuminating source is unknown~\cite{GGNA2019}.

So far, the remarkable efficiency and accuracy of the AS decomposition for the approximation of piecewise constant functions is only justified via numerical evidence. Although previous inversion algorithms based on the AS decomposition iteratively adapt the basis functions $\vphi_k$, they do not provide any criteria for adapting the dimension $K$ of the search space.
Here, we precisely address these two open questions. 
In Section \ref{sec:InverseProblem}, we present a strategy for adapting the dimension $K$ of the search space,
by solving a small quadratically constrained quadratic minimization problem to filter basis functions while preserving important features. 
The resulting new ASI (adaptive spectral inversion) algorithm is listed in Section 2.2.
In Section \ref{sec:AE_analysis}, we derive rigorous error estimates for $\vphi_0$ and for the first eigenvalues and eigenfunctions of the elliptic operator associated with a piecewise constant function. In particular, we prove that $\vphi_0$ and the first eigenfunctions $\vphi_k$ are ``almost'' piecewise constant in the sense that their gradients are small outside a neighborhood of internal discontinuities.
We also provide a numerical example which illustrates the theory.
In Section \ref{sec:NumRes}, we apply the ASI Algorithm to two inverse medium problems, 
the first where the medium is composed of five simple geometric inclusions, and the second, where 
the medium corresponds to a two-dimensional model of a salt dome from geophysics.
Finally, we conclude with some remarks in Section \ref{sec:conc}.

%
%
%
%
%
%
%
%
%
%
%
%
%
%
%
%
\section{Inverse scattering problem\label{sec:InverseProblem}}
First, we consider a time-harmonic inverse scattering problem and reformulate it as a PDE-constrained optimization problem. Then we present the adaptive spectral inversion (ASI)  method, list the full ASI Algorithm and discuss in further detail the individual steps in adapting both the basis and its dimension during the nonlinear
iteration. 

\subsection{Inverse scattering problem}
We consider a time-harmonic inverse scattering problem where the scattered wave field, $y(x)$, satisfies the Helmholtz equation:
\begin{subequations}
  \label{eq:HE}
  \begin{alignat}{4}
  -\dvg\rb{ \uTotal(x) \nabla y(x)} \ - \ \omega^2\ y(x) &\ = \ f(x)      ,&&\qquad  x\in\Omega,
  \label{eq:HE_PDE}
  \\
  \frac{\partial y}{\partial n}(x) \ -\  i \frac{\omega}{\sqrt{\uTotal(x)}}\ y(x) &\ = \ g(x) ,&&\qquad x\in\partial \Omega.
  \label{eq:HE_ABC}
   \end{alignat}
\end{subequations}
Here, $\Omega\subset \mathbb{R}^d$ is a bounded domain with Lipschitz boundary $\partial \Omega$
and outward unit normal $n$,  $f\in L^2(\Omega)$ and $g \in L^2(\partial \Omega)$ are known sources,
and $\omega=2\pi\nu$ is the angular frequency corresponding to the (regular) frequency $\nu>0$.
The squared wave speed, $\uTotal(x)$, of the (unknown) medium satisfies $\uTotal(x)\ge u_{\min}>0$
 throughout  $\Omega$
and is assumed known on the boundary $\partial\Om$.

Given observations $\yObs_\ell$ on a subset $\G$ of $\p\Om$ of the scattered fields, $\y_\ell=\y_\ell[u]$,  due
to known sources $f=f_\ell$ and $g=g_\ell$, $\ell=1,\ldots,N_s$,
we wish to recover the medium $\uTotal(x)$ inside $\Omega$ by minimizing the cost functional
$\cJ:L^\infty(\Omega)\rightarrow \mathbb{R}$,
\begin{equation}
	\label{eq:cost_functional}
	\cJ[w] = \frac{1}{2}\sum_{\ell=1}^{N_S}\|\y_\ell[w]-\yObs_\ell \|_{L^2(\Gamma)}^2.
\end{equation}
Hence, we consider the PDE-constrained optimization problem,
\begin{equation}
	\label{eq:optimization-problem}
	\uTotalMin = \argmin_{w\in W} \ \cJ[w] ,
\end{equation}
where $W$ is a space of candidate media 
and  $y_\ell$ solves \deq{eq:HE} with $f=f_\ell$ and $g=g_\ell$.
We regard $W$ as a subspace of $L^2(\Om)$ with its standard inner product and norm, denoted by $\angb{\cdot,\cdot}$ and $\|\cdot\|$, respectively.

\subsection{Adaptive spectral inversion \label{sec:AS_algorithm}}


The adaptive spectral inversion (ASI) algorithm minimizes $\cJ$ in (\ref{eq:cost_functional}) 
over a finite-dimensional subspace of $W$ by building and repeatedly adapting a finite basis as follows.
Suppose we have, at iteration $m$, a function $\vphi^{(m)}_0\in W$, which coincides with the true medium $u$ on the boundary, and a subspace $\Psi_{}^{(m)} \subset W$ spanned by an orthonormal set
\begin{equation}
    \Psi_{}^{(m)} = \Span\{\psi_1^{(m)},\ldots,\psi_{\PsiDim_{m}}^{(m)}\} 
\end{equation}
of functions that vanish on $\partial \Omega$.
Then, we determine a (local) minimizer $u^{(m)}$ of $\cJ$ in the $\PsiDim_{m}$-dimensional affine space $\vphi_0^{(m)}+\Psi_{}^{(m)}$, i.e.,
\begin{equation}
	\label{eq:optimization-problem_m}
	u^{(m)} = \argmin_{v \in \vphi_0^{(m)}+\Psi_{}^{(m)}} \cJ [v] ,
\end{equation}
using any standard Newton or quasi-Newton method; the smaller  $\PsiDim_{m}$, the cheaper the
numerical solution of the nonlinear optimization problem \eqref{eq:optimization-problem_m}.

%
%

Once we have determined $u^{(m)}$, we must update the search space needed at the next iteration, that is, determine $\vphi_{0}^{(m+1)}\in W$ and $\Psi^{(m+1)}\sbt W$.
First, we compute $\vphi_{0}^{(m+1)}$, by solving
\begin{equation}\label{eq:phi0BVP_alg_m}
	L_\veps\big[u^{(m)}\big]\vphi_0^{(m+1)} =0
	\quad \text{in $\Omega$,}
	\qquad
        \vphi_0^{(m+1)} = u
	\quad \text{on $\partial\Omega$,}
\end{equation}
where $L_\veps[w]$ is a $w$-dependent, symmetric and elliptic operator; the particular form used here is specified below, but other choices are possible -- see Section \ref{sec:AE_analysis} and \cite{GN2019}.
Recall that $u$ is assumed known on the boundary $\partial\Omega$. 
Then, we compute the first $K_{m+1}$ (orthonormal) eigenfunctions $\vphi_{1}^{(m+1)},\ldots,\vphi_{K_{m+1}}^{(m+1)}$ of $L_\veps[ u^{(m)}-\vphi_0^{(m+1)} ]$ (or possibly $L_\veps[ u^{(m)}]$, cf. \cite{GKN2017,GN2019}) by solving the eigenvalue problem
\begin{equation}\label{eq:eigenValProb_alg_m}
	L_\veps\big[u^{(m)}-\vphi_0^{(m+1)} \big]\vphi_k^{(m+1)} =\lam_k \vphi_k^{(m+1)}
	\quad \text{in $\Omega$,}
	\qquad
	\vphi_k^{(m+1)} =0
        \quad \text{on $\partial\Omega$,}
\end{equation}
 for the smallest, not necessarily distinct eigenvalues $0<\lam_1\le \cdots\le\lam_{K_{m+1}}$.
The eigenfunctions $\vphi_1^{(m+1)},\ldots,\vphi_{K_{m+1}}^{(m+1)}$ should enable
an efficient representation of the current iterate $u^{(m)}$ but also enrich the search space by introducing new potential candidate media for further minimizing  $\cJ$ at the next iteration.

Next, we merge the current search space $\Psi^{(m)}$ with the space spanned by the new eigenfunctions as
\begin{equation}
    \label{eq:merged-space}
	\wt{\Psi}^{(m+1)} = \Psi^{(m)} +
		\Span\big\{\vphi_0^{(m+1)}-\vphi_0^{(m)},\vphi_1^{(m+1)},\ldots,\vphi_{K_{m+1}}^{(m+1)} \big\} 
\end{equation}
to ensure that the current solution $u^{(m)} \in \vphi_0^{(m)}+\Psi_{}^{(m)}$ also belongs to 
$\vphi_0^{(m+1)} + \wt{\Psi}^{(m+1)}$. By a standard Gram-Schmidt procedure, we 
compute an $L^2$-orthonormal basis
for the merged space,
\[
	\wt{\Psi}^{(m+1)} =\Span\big\{ \psi_1^{(m+1)},\ldots,\psi_{\wt{\PsiDim}_{m+1}}^{(m+1)} \big\}.
\]

In the process of adapting the search space, it is crucial to also adapt its new dimension, 
which not only directly impacts the computational cost but also acts as regularization.
Hence, we shall reduce $\wt{\Psi}^{(m+1)}$ and thus
obtain the new search space $\Psi^{(m+1)} \subset \wt{\Psi}^{(m+1)}$
of dimension $\PsiDim_{m+1}$ while imposing some regularity on the solution.
To decide which basis functions of  $\wt{\Psi}^{(m+1)}$ to keep and which to discard, we compute an indicator $\widetilde{u}^{(m+1)}\in \wt{\Psi}^{(m+1)}$ as a filtered approximation of $u^{(m)}-\vphi_0^{(m+1)}$ in $\wt{\Psi}^{(m+1)}$. Then, we reorder the basis functions $\psi^{(m+1)}_k$ in decreasing order of its Fourier coefficients
\begin{equation}\label{eq:FourierCoeff}
	\g_k = \angb{\widetilde{u}^{(m+1)}, \psi^{(m+1)}_k} ,
\end{equation}
and discard any $\psi^{(m+1)}_k$ associated with small $\g_k$. This
yields the new search space $\Psi^{(m+1)}$ 
spanned by the remaining $\PsiDim_{m+1}$ basis functions,
\begin{equation}
	\label{eq:reduced-and-trimmed-space}
	\Psi^{(m+1)} = \Span\big\{ \psi_1^{(m+1)},\ldots,\psi_{\PsiDim_{m+1}}^{(m+1)} \big\} .
\end{equation}
Below we summarize the entire adaptive spectral inversion (ASI) Algorithm:
\begin{algorithm}\caption{ASI Algorithm}\label{algo:ASI}
\DontPrintSemicolon
\KwIn{initial guess $u^{(0)}$ of $u$, affine space $\vphi_0^{(1)} + \Psi^{(1)}$}
\KwOut{reconstruction $u^{(m)}$ of $u$}
\For{$m=1,2,\ldots$}{
	Determine (local) minimizer $\uTotal^{(m)}$ of $\cJ[v]$ in the $\PsiDim_{m}$-dimensional affine space $\vphi_0^{(m)} + \Psi^{(m)}$ by solving \eqref{eq:optimization-problem_m}.
	\;\BlankLine
	\label{algo:STEP-1}

	\uIf{$\|u^{(m)}-u^{(m-1)}\|<\varepsilon_{tol}$}
	{
		{\bf  return} $u^{(m)}$
	}
	\BlankLine

	Compute $\vphi_0^{(m+1)}$ and the first eigenfunctions $\vphi_1^{(m+1)},\ldots,\vphi_{K_{m+1}}^{(m+1)}$ as in \eqref{eq:phi0BVP_alg_m}, \eqref{eq:eigenValProb_alg_m} for the current medium $u^{(m)}$.
	\;\BlankLine
	\label{algo:STEP-2}

	\textit{Merge:}\ \ Compute an orthonormal basis $\{\psi_k^{(m+1)}\}_{k=1}^{\wt{\PsiDim}_{m+1}}$ for the merged space $\wt{\Psi}^{(m+1)}$ given by \deq{eq:merged-space}. 
	\;\BlankLine
	\label{algo:STEP-3}

	\textit{Filter:}\ \ Compute the indicator $\widetilde{u}^{(m+1)}$ as a filtered (regularized) approximation of $u^{(m)}-\vphi_0^{(m+1)}$ in $\wt{\Psi}^{(m+1)}$. 
	\;\BlankLine
	\label{algo:STEP-4}

	\textit{Truncate:}\ \ Reduce the dimension of $\wt{\Psi}^{(m+1)}$ by discarding those $\psi_i^{(m+1)}$ with smallest Fourier coefficients \deq{eq:FourierCoeff}. 
	This yields $\Psi^{(m+1)}$ of dimension $\PsiDim_{m+1}$.
	\;
	\label{algo:STEP-5}

}
\end{algorithm}
In the above ASI Algorithm, most of the computational work occurs in Step \ref{algo:STEP-1}, which involves multiple numerical solutions of the (forward) problem (\ref{eq:HE}). 
For \eqref{eq:phi0BVP_alg_m} and \eqref{eq:eigenValProb_alg_m}
in Step \ref{algo:STEP-2}, we always use $L_\veps[w]$ of the form
\begin{equation}\label{eq:linear_op}
	L_\veps[w]v=-\dvg\rb{\mu_{\veps}[w]\nabla v} ,
\end{equation}
where the $w$-dependent weight function $\mu_\veps[w]$ is given by
\begin{equation}\label{eq:mutwo}
	\mu_{\veps}[w]=\frac{1}{\sqrt{|\nabla w|^2+\veps^2}} \, ,
\end{equation}
with $\veps>0$ a small parameter; in our computations, we always set  $\veps = 10^{-8}$.
Note that the theoretical properties of the adaptive spectral decomposition proved in Section \ref{sec:AE_analysis} also apply to more general $\mu_\veps[w]$. We now describe in more detail the individual steps of the above ASI Algorithm. 

To initialize the ASI Algorithm, we require an initial guess $u^{(0)}$,
an approximation of the background $\vphi_0^{(1)}$ that coincides with the medium on the boundary,
and a finite dimensional space $\Psi_{}^{(1)}$ given as the span of an orthonormal basis
\[
	\Psi_{}^{(1)} = \Span\big\{\psi_1^{(1)},\ldots,\psi_{\PsiDim_1}^{(1)} \big\} .
\]
The initial background $\vphi_{0}^{(1)}$ and orthonormal basis of $\Psi_{}^{(1)}$ can
be determined by solving (\ref{eq:phi0BVP_alg_m}) and (\ref{eq:eigenValProb_alg_m}) with $m=0$ and setting $\psi^{(1)}_k=\vphi_k^{(1)}$. For instance, if $u^{(0)}$ is constant throughout $\Omega$ and equal to $u$ on the boundary, then $\vphi_{0}^{(1)} = u^{(0)}$ whereas $\vphi_1^{(1)}, \vphi_2^{(1)}, \dots$  correspond to the first eigenfunctions of the Laplacian.
Alternatively, one may choose $\vphi_{0}^{(1)}$ or the basis of $\Psi_{}^{(1)}$ a priori,
independently of $u^{(0)}$.

For the minimization of (\ref{eq:optimization-problem_m}) in Step \ref{algo:STEP-1}, we opt for a quasi-Newton method with rank-two updates (BFGS, \cite{NW2006}), but other choices are possible. As initial guess
for $m>1$, one can use the $L^2$-projection onto $\vphi_0^{(m)}+\Psi^{(m)}$ either of $u^{(m-1)}$ or its filtered approximation $\vphi_0^{(m)}+\widetilde{u}^{(m)}$.

In Step \ref{algo:STEP-2}, the number of new eigenfunctions $K_{m+1}$ is somewhat arbitrary,
since the dimension of the new basis is anyway adapted subsequently; 
here, we typically take $K_{m+1}= \PsiDim_{m}$ with $J_1=100$.
To obtain the $L^2$-orthonormal basis $\{\psi_i^{(m+1)}\}_{i=1}^{\wt{\PsiDim}_{m+1}}$ for $\wt{\Psi}^{(m+1)}$ in Step \ref{algo:STEP-3}, we apply the standard modified Gram-Schmidt algorithm to the ordered set
\[
	\Big(\vphi_1^{(m+1)},\ldots,\vphi_{K_{m+1}}^{(m+1)},\vphi_0^{(m+1)}-\vphi_0^{(m)},
		\psi_1^{(m)},\ldots,\psi_{\PsiDim_{m}}^{(m)} \Big)
\]
spanning $\wt{\Psi}^{(m+1)}$.

In Step \ref{algo:STEP-4}, we compute the indicator $\widetilde{u}^{(m+1)}$ used subsequently for truncating the space $\wt{\Psi}^{(m+1)}$ as follows.
The basis of the truncated new search space $\Psi^{(m+1)}$ ought to preserve edges in the medium but also suppress noise. Hence, we seek an indicator which is close to $u^{(m)}-\vphi_0^{(m+1)}$, yet with minimal TV,
by considering the  constrained minimization problem
\begin{equation}
\begin{aligned}
	\label{eq:ConsMinTV}
	\min_{v\in \wt{\Psi}^{(m+1)}}\ & \int_{\Om} |{\nabla v}|,
			\quad \text{subject to} \\
		& \|v-(u^{(m)}-\vphi_0^{(m+1)})\|^2 \le \veps_\Psi^2 \|u^{(m)}-\vphi_0^{(m+1)}\|^2 ,
\end{aligned}
\end{equation}
for a prescribed tolerance $\varepsilon_{\Psi}>0$.
Since the solution of \eqref{eq:ConsMinTV} is expensive, we replace 
the TV functional
\[
	\int_{\Om} |{\nabla v}| ,
\]
by the approximation \cite{GKN2017,GN2019}
\begin{equation}\label{eq:AppReg_quadForm}
	\int_{\Om}\mu_\veps[u^{(m)}-\vphi_0^{(m+1)}] |{\nabla v}|^2.
\end{equation}
Hence, we compute the indicator $\widetilde{u}^{(m+1)}$ by solving the quadratically constrained quadratic minimization problem for fixed $w =u^{(m)}-\vphi_0^{(m+1)}$,
\begin{equation}
\begin{aligned}\label{eq:Filtered_u}
	\widetilde{u}^{(m+1)} =\argmin_{v\in \wt{\Psi}^{(m+1)}}\
		& \int_{\Om}\mu_\veps[w] |{\nabla v}|^2 ,
			\quad \text{subject to} \\
		& \|v-w\|^2 \le \veps_\Psi^2 \|w\|^2 .
\end{aligned}
\end{equation}
which is cheap.

Once $\widetilde{u}^{(m+1)}$ has been computed,
we truncate  $\wt{\Psi}^{(m+1)}$ in Step \ref{algo:STEP-5} as follows.
Given the Fourier coefficients $\gamma_\ell$ of $\widetilde{u}^{(m+1)}$ arranged in decreasing order, with $\gamma_\ell$ as in \deq{eq:FourierCoeff}, we remove the maximal number of 
basis functions with corresponding smallest Fourier coefficients, setting
\begin{equation}
	\label{eq:truncation-criterion}
	\PsiDim_{m+1}^0 = \min
  		\bigg\{ J\in\{1,\ldots,\wt{\PsiDim}_{m+1}\}\ :\
			\sum_{\ell=\PsiDim+1}^{\wt{\PsiDim}_{m+1}}{\gamma_\ell^2}
				\leq \varepsilon_{\Psi}^2 |\gamma|^2
		\bigg\}, \qquad \rho = \frac{\PsiDim_{m+1}^0}{\PsiDim_{m}}
\end{equation}
for a given tolerance  $\varepsilon_{\Psi}$.
To avoid abrupt changes in the number of basis functions from one iteration to the next, 
we accept the value $\PsiDim_{m+1}=\PsiDim_{m+1}^0$ without further change  if 
\begin{equation}
	\label{eq:truncation-criterion-lowerbound}
	\rho \in [\rho_0, \rho_1] ,
\end{equation}
for some prescribed $0\le \rho_0 \le 1 \le \rho_1$. Otherwise if
$\rho > \rho_1$, we still set $\PsiDim_{m+1}=\PsiDim_{m+1}^0$ but also increase $\varepsilon_\Psi$, e.g., by doubling it, to avoid such an overly large increase in dimension at the next iteration. 
On the other hand if $\rho< \rho_0$, we set $\PsiDim_{m+1} = \left \lceil{\rho_0 \PsiDim_{m}}\right \rceil$ 
to avoid an overly rapid drop in dimension and thus lose important information; 
moreover, we decrease $\varepsilon_\Psi$ in (\ref{eq:truncation-criterion}), e.g., by halving it.
Typically, we choose $\rho_0=0.9$, $\rho_1=1.1$, and $\varepsilon_\Psi=10^{-3}$.

%

Finally, we embed the ASI algorithm in a standard frequency stepping approach, where the inverse problem is solved at increasingly higher frequencies $\nu$.
Whenever two consecutive iterates satisfy
\begin{equation}
    \label{eq:frequency-stepping-criterion}
    \|u^{(m)}-u^{(m-1)}\|<\varepsilon_{\nu} ,
\end{equation}
for a given tolerance $\varepsilon_{\nu}$, we proceed to the observations obtained at a higher frequency.

%
%
%
%
%
\section{Analysis of adaptive spectral decompositions}
\label{sec:AE_analysis}

In this section we derive estimates for $\vphi_0$ and the eigenvalues $(\lam_k)_k$ and eigenfunctions $(\vphi_k)_k$ of the linear operator $L_\veps[u_\del]$ defined in \deq{eq:linear_op} with $u_\del$ an $H^1$-regular approximation of a given piecewise constant function $u$.
Typically, the medium-dependent weight function $\mu_\veps[w]$ which characterizes the AS decomposition has either the form
\begin{equation}\label{eq:muq}
	\mu_\veps[w](x) =\ \frac{1}{(|\nabla w(x)|^q+\veps^q)^{1/q}} \, ,
\end{equation}
for some $q\in[1,\infty)$, or
\begin{equation}\label{eq:muinf}
	\mu_\veps[w](x)  =\ \frac{1}{\max(|\nabla w(x)|,\, \veps )} \, .
\end{equation}
For the analysis below, we allow $\mu_\veps[w]$ to have a more general form.

\subsection{Assumptions and definitions}
\subsubsection{Medium dependent weight function}

Let $\Om\sbt \R^d$ be a bounded and connected Lipschitz domain.
We assume that for $w:\Om\to\R$, with $\nabla w\in L^{\infty}(\Om)$, $\mu_\veps[w]$ is given by
\begin{equation}\label{eq:mu_hatmu}
	\mu_\veps[w](x)=\hat\mu_\veps(|\nabla w(x)|) ,
	\qquad
	x\in\Om ,
\end{equation}
where $\hat\mu_\veps:[0,\infty)\to\R$ is a non-increasing function that satisfies
\begin{equation}\label{eq:mu_cond}
	\hat\mu_\veps(0)=\veps^{-1} ,
	\qquad
	0<\hat\mu_\veps(t) ,
	\quad
	t \hat\mu_\veps(t)\le 1 ,
	\quad
	t\ge 0 .
\end{equation}
In particular, these assumptions yield
\begin{equation}\label{eq:REstMu}
	\mu_\veps[w](x)|\nabla w(x)| \le 1 ,
	\qquad
	\text{a.e. $x\in\Om$,}
\end{equation}
and
\begin{equation}\label{eq:LEstMu}
	0 < \hat\mu_\veps(\|\nabla w\|_{L^\infty(\Om)}) \le \mu_\veps[w](x)
	\qquad
	\text{a.e. $x\in\Om$.}
\end{equation}
Although this framework encompasses weight functions as in \deq{eq:muq} or \deq{eq:muinf}, it does not include weight functions, associated with the Lorentzian or Gaussian penalty terms \cite{GN2019}, for instance, which do not satisfy the last inequality of \deq{eq:mu_cond}.
However, the above framework easily extends to more general weight functions that satisfy an estimate
\begin{equation}\label{eq:mu_cond_r}
	t^r \hat\mu_\veps(t)\le 1 ,
	\qquad
	t\ge 0 ,
\end{equation}
for some $r\in[1,2]$.
This extension, indeed, addresses the Lorentzian and Gaussian weight functions which satisfy \deq{eq:mu_cond_r} with $r=2$.
For the analysis, we assume \deq{eq:mu_cond}, i.e., \deq{eq:mu_cond_r} with $r=1$, and comment on the extension to the more general case $r\in[1,2]$ in Remark \ref{rem:generalization} below.

\subsubsection{Elliptic boundary value problems}

To include FE approximations in the analysis, we formulate the boundary value problems \deq{eq:phi0BVP} and \deq{eq:eigenValProb} in closed subspaces $\cV^\del\sbt H^1(\Om)$ and $\cV^\del_0=\cV^\del \cap H^1_0(\Om)$, respectively.
We say that $\vphi_0\in\cV^\del$ satisfies
\begin{equation}\label{eq:phi0BVP_analysis}
	L_\veps[u_\del]\vphi_0 =0
	\quad \text{in $\Omega$,}
	\qquad
        \vphi_0 = u_\del
	\quad \text{on $\partial\Omega$}
\end{equation}
in $\cV^\del_0$, if
\bseql{eq:phi0BVP_weakForm}
\begin{align}
	B[\vphi_0,v] &\ = \ 0 , \qquad \forall \, v\in\cV^\del_0 \\
	\vphi_0      &\ = \ u_\del , \qquad \text{on $\partial\Omega$,}
\end{align}
\eseq
where $B[\cdot,\cdot]$ is the bilinear form given by
\begin{equation}\label{eq:eigenValProb_bilinearform}
	B[v,w]
	\ = \ \angb{\mu_{\veps}[u_\del] \nabla v, \nabla w},
\end{equation}
with $\angb{\cdot,\cdot}$ denoting the standard $L^2(\Om)$ inner product.
We say that $\lam\in\R$ is an eigenvalue of $L_\veps[u_\del]$ in $\cV^\del_0$, if there exists $0\ne\vphi\in\cV^\del_0$ such that
\begin{equation}\label{eq:eigenValProb_analysis}
	L_\veps[u_\del]\vphi =\lam \vphi
	\quad \text{in $\Omega$,}
	\qquad
	\vphi =0
        \quad \text{on $\partial\Omega$,}
\end{equation}
in $\cV^\del_0$, that is, if 
\begin{equation}\label{eq:eigenValProb_weakForm}
	B[\vphi,v]
	\ = \ \lam\angb{\vphi,v},
	\qquad
	\forall \, v\in \cV^\del_0 .
\end{equation}

As shown below, the operator $L_\veps[u_\del]$ is symmetric and uniformly elliptic.
Thus, we let $(\lam_k)$ be the nondecreasing (perhaps finite) sequence of (real positive) eigenvalues of $L_\veps[u_\del]$ in $\cV^\del_0$, with each eigenvalue repeated according to its multiplicity, and $(\vphi_k)$ be a basis of $\cV^\del_0$ of corresponding eigenfunctions, orthonormal with respect to the $L^2$ inner product.
Finally, we denote by $\|\cdot\|$ the norm of $L^2(\Om)$, and by $|\cdot|$ the $\ell^2$-norm.

\subsubsection{Piecewise constant medium}

\begin{figure}
\centering
	\includegraphics[width=11cm]{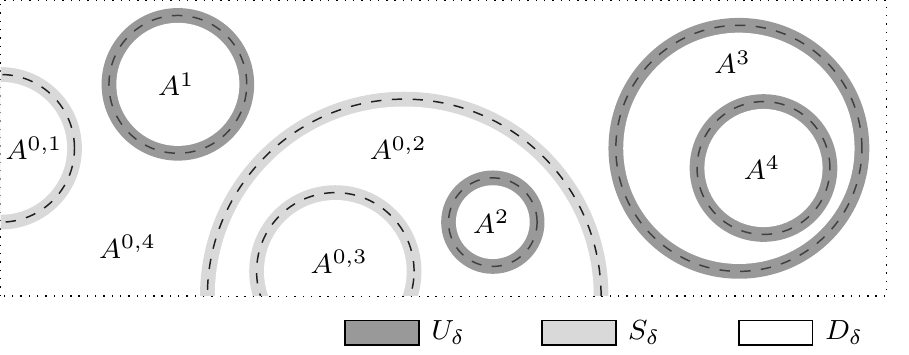}
	\caption{Typical arrangement in two dimensions of the sets $A^k$, $A^{0,m}$ composing the piecewise constant medium $u$, and $U_\del$, $S_\del$, and $D_\del$ given by \deq{eq:setU}, \deq{eq:setS_del}, and \deq{eq:setD}, respectively.}
    \label{fig:1D-Illustration}
\end{figure}

Let the medium $u:\Om\to\R$ be piecewise constant admitting a decomposition
\begin{equation}\label{eq:u_decomp}
	u(x)
	\ = \ u^0(x)+\wt{u}(x) ,
	\qquad
	x\in\ol{\Om}
\end{equation}
into a piecewise constant background $u^0$, composed of features reaching the boundary $\p\Om$, and a piecewise constant perturbation $\wt u$, composed of a finite number of inclusions separated from the boundary.
More precisely, we assume
\begin{equation}
	u^0
	\ = \ \sum_{m=1}^{M} \hat u^{0,m}\chi^{0,m}
\end{equation}
where $\hat u^{0,m}\in\R$, and $\chi^{0,1},\ldots,\chi^{0,M}$ are the characteristic functions of mutually disjoint connected open sets $A^{0,1},\ldots,A^{0,M}$ such that
\begin{equation}
	\ol{\Om} \sbt \ol{\bigcup_{m=1}^M A^{0,m} } ,
	\qquad
	\cH^{d-1}\brb{\p A^{0,m}\cap \p\Om}>0 ,
\end{equation}
where $\cH^{d-1}$ denotes the $(d-1)$-dimensional Hausdorff measure.
Although the coefficients $\hat u^{0,k}$ must satisfy $\hat u^{0,k}>0$ for the Helmholtz equation, they may take any real value in the present analysis.
We further assume that
\begin{equation}\label{eq:wtu}
	\wt{u}
	\ = \ \sum_{k=1}^{K} \hat u^{k}\chi^{k} ,
	\qquad
	\hat u^{k}\ne0 ,
\end{equation}
where for each $k=1,\ldots,K$, $\chi^k$ is the characteristic function of a connected open set $A^k\sbt\sbt\Om\setminus S$, where
\begin{equation}\label{eq:setS}
	S
	\ = \ \rb{\bigcup_{m=1}^{M} \p A^{0,m}}\setminus \p\Om
\end{equation}
is the set of jump discontinuities, or interfaces, in the background $u^0$.
Moreover, we assume that the sets $A^1,\ldots,A^K$ have mutually disjoint boundaries; the full inclusion of one set inside another, however, is allowed.

For $\del>0$, let $S_\del$ denote the $\del$-neighborhood of $S$,
\begin{equation}\label{eq:setS_del}
	S_\del
	\ = \ \curb{x\in\Om\, |\ \dist(x,S)<\del} ,
\end{equation}
and, similarly, let $U^k_\del$ denote the $\del$-neighborhood of $\p A^k$,
\begin{equation}\label{eq:setU}
	U^k_\del
	\ = \ \curb{x\in\Om\, |\ \dist(x,\p A^k)<\del} ,
	\quad\text{and}\quad
	U_\del
	\ = \ \bigcup_{k=1}^M U^k_\del .
\end{equation}
Then we define the open complement $D_\del$ as
\begin{equation}\label{eq:setD}
	D_\del
	\ = \ \Om\setminus \rb{\ol{U_\del\cup S_\del}} ,
\end{equation}
the $\del$-interior $A^k_\del$ of $A^k$ as
\begin{equation}\label{eq:setA}
	A^k_\del
	\ = \ \rb{\Om\setminus \ol{U^k_\del}}\cap A^k ,
	\quad\text{and}\quad
	A_\del
	\ = \ \bigcup_{k=1}^K A^k_\del .
\end{equation}
Figure \ref{fig:1D-Illustration} shows a typical arrangement in two dimensions.

Let $u_\del$ be an approximation of $u$ obtained by a linear method.
For example, $u_\del$ may be the interpolant of $u$ in an $H^1$-conforming FE space;
then, the parameter $\del$ corresponds to the mesh size.
We assume that for each $m=1,\ldots,M$, the approximations $\chi^{0,m}_\del\ \in \cV^\del$ of $\chi^{0,m}$ satisfy $\lim_{\del\to0}\chi_\del^{0,m}=\chi^{0,m}$ in $L^2(\Om)$, 
and similarly, for each $k=1,\ldots,K$, that the approximations $\chi^{k}_\del\ \in \cV^\del_0$ of $\chi^{k}$ satisfy $\lim_{\del\to0}\chi_\del^{k}=\chi^{k}$ in $L^2(\Om)$.
For each $\del>0$, the $H^1$-regular approximation $u_\del$ is thus given by
\begin{equation}\label{eq:u_del_decomp}
	u_\del \ = \ u^0_\del+\wt{u}_\del ,
\end{equation}
where
\begin{equation}
	u^0_\del
	\ = \ \sum_{m=1}^{M} \hat u^{0,m}\chi^{0,m}_\del \in\cV^\del ,
	\qquad
	\wt{u}_\del
	\ = \ \sum_{k=1}^{K} \hat u^{k}\chi^{k}_\del \in\cV^\del_0 .
\end{equation}

\subsection{Statement of main results and discussion}

To simplify the presentation, we include in this section only short proofs and proofs of the main results; the remaining proofs are provided in Section \ref{sec:MainProofs}.
The main result, given by Theorem \ref{thm:main}, provides estimates for the approximation $\vphi_0$ of the background $u^0$, and for the first $K$ eigenvalues $\lam_k$ and eigenfunctions $\vphi_k$ of $L_\veps[u_\del]$ in \deq{eq:linear_op}.
From Theorem \ref{thm:main}, we deduce Corollary \ref{cor:main} which provides similar estimates for finite element formulations or for $u_\del$ obtained from the convolution of $u$ with a mollifier.

Theorem \ref{thm:main} relies on Lemmas \ref{lem:background_estimation} and \ref{lem:eigVal_est} which require the approximation $\{\chi_\del\}_\del$ of each characteristic function $\chi=\chi^k$, and $\chi=\chi^{0,m}$, to satisfy
\begin{equation}\label{eq:TV_Bounded}
	\|\nabla \chi_\del\|_{L^1(\Om)}\le C ,
\end{equation}
for every $\del>0$ sufficiently small.
Note that since $\chi_\del$ converges to a function $\chi$ with jump discontinuities, the gradients of $\chi_\del$ need not be bounded uniformly with respect to $\del$, for $\del$ close to zero.
Whether \deq{eq:TV_Bounded} is satisfied depends on geometric properties of the method by which $\chi_\del$ is obtained, as well as on properties of the set $A=\chi^{-1}(\{1\})$.
The following lemma provides sufficient conditions for \deq{eq:TV_Bounded} to hold.

\begin{lemma}\label{lem:SuffCondTVEst}
Let $A\sbt\R^d$ be a bounded Lipschitz domain,
\[
	U_\del
	\ = \ \curb{x\in\R^d\, |\ \dist(x,\p A)<\del}
\]
with $\del\in(0,\eta]$, for $\eta>0$, and  $\cL$ the Lebesgue measure.
Then, the following assertions hold:
\begin{enumerate}
\item\label{lem:SuffCondTVEst_L}
There exists a constant $C>0$ such that $\del^{-1}\cL(U_\del)<C$, for every $\del\in(0,\eta]$.

\item\label{lem:SuffCondTVEst_g}
If $\{g_\del\}_{\del\in(0,\eta]}\sbt L^p(\R^d)$ such that $\supp{g_\del} \sbt \ol{U_\del}$ for all $\del\in(0,\eta]$, and if
\begin{equation}\label{eq:CondLpEst}
	\del^{1-1/p}\|g_\del\|_{L^p(\R^d)}\le C_1 ,
	\qquad
	\forall \, \del\in(0,\eta]
\end{equation}
for some $p\in[1,\infty]$ (with the usual convention $1/\infty:=0$), then there exists a constant $C>0$, such that for every $\del\in(0,\eta]$,
\begin{equation}
	\|g_\del\|_{L^1(\R^d)}\le C .
\end{equation}
\end{enumerate}
\end{lemma}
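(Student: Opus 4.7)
\textbf{Plan for the proof of Lemma \ref{lem:SuffCondTVEst}.}

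For assertion \ref{lem:SuffCondTVEst_L}, the plan is to exploit the Lipschitz regularity of $\p A$. Since $A$ is a bounded Lipschitz domain, $\p A$ is compact and admits a finite cover by open sets $V_1,\ldots,V_N$ in each of which, after a rigid motion, $\p A$ coincides with the graph of a Lipschitz function $h_i:B_i\sbt\R^{d-1}\to\R$ with Lipschitz constant $L_i$, and $A\cap V_i$ lies on one side of the graph. In such local coordinates, the $\del$-neighborhood of the graph is contained in the set of points $(x',x_d)$ with $x'\in B_i$ and $|x_d-h_i(x')|\le \del\sqrt{1+L_i^2}$, whose $d$-dimensional Lebesgue measure is at most $2\del\sqrt{1+L_i^2}\,\cL^{d-1}(B_i)$. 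Summing over $i$ and enlarging the cover slightly to absorb the parts of $U_\del$ near the ``seams,'' one gets $\cL(U_\del)\le C\del$ uniformly for $\del\in(0,\eta]$, where $\eta$ is chosen small enough that $U_\del$ is still covered by the $V_i$. The only mildly delicate point is ensuring the cover remains valid up to $\del=\eta$, which is handled by taking $\eta$ sufficiently small from the outset.

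For assertion \ref{lem:SuffCondTVEst_g}, the plan is a direct application of H\"older's inequality combined with assertion \ref{lem:SuffCondTVEst_L}. Since $\supp{g_\del}\sbt\ol{U_\del}$, for $p\in(1,\infty)$ with conjugate exponent $p'=p/(p-1)$,
\begin{equation*}
	\|g_\del\|_{L^1(\R^d)}
	\ = \ \int_{U_\del} |g_\del|
	\ \le \ \|g_\del\|_{L^p(\R^d)} \cL(U_\del)^{1/p'}
	\ \le \ \|g_\del\|_{L^p(\R^d)} (C\del)^{1-1/p},
\end{equation*}
and the hypothesis \deq{eq:CondLpEst} then yields $\|g_\del\|_{L^1(\R^d)}\le C^{1-1/p}C_1$. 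The endpoint cases are handled separately but analogously: for $p=1$ the hypothesis is already the conclusion, while for $p=\infty$ one writes $\|g_\del\|_{L^1(\R^d)}\le \|g_\del\|_{L^\infty(\R^d)}\cL(U_\del)\le (C_1/\del)(C\del)=CC_1$.

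The only real work is in assertion \ref{lem:SuffCondTVEst_L}; assertion \ref{lem:SuffCondTVEst_g} is a one-line consequence. If a more compact presentation is preferred, one can simply cite the estimate $\cL(U_\del)\le C\del$ as a standard property of Lipschitz boundaries (it is essentially equivalent to the $(d-1)$-Minkowski content of $\p A$ being finite) and devote the proof to the short H\"older calculation above.
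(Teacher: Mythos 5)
Your proposal is correct, and assertion \ref{lem:SuffCondTVEst_g} is proved exactly as in the paper (the same H\"older computation against $\cL(U_\del)^{1-1/p}$; the paper likewise notes that $p=1$ is trivial, and your explicit endpoint cases are fine). For assertion \ref{lem:SuffCondTVEst_L}, however, you take a genuinely different and more elementary route. The paper does not estimate $\cL(U_\del)$ by hand: it shows that the function $\psi(\del)=\del^{-1}\cL(U_\del)$ extends continuously to the compact interval $[0,\eta]$, using two pieces of geometric measure theory --- Federer's theorem that the $(d-1)$-dimensional Minkowski content of the $(d-1)$-rectifiable set $\p A$ equals $2\cH^{d-1}(\p A)$, which gives continuity at $\del=0$ with $\psi(0)=2\cH^{d-1}(\p A)$, and the co-area formula applied to the distance function $\rho(x)=\dist(x,\p A)$ (which satisfies $|\nabla\rho|=1$ a.e.), which gives continuity of $\del\mapsto\cL(U_\del)$ on $(0,\eta]$ --- and then concludes boundedness by compactness. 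Your chart-by-chart tube estimate, with the correct vertical-distance factor $\sqrt{1+L_i^2}$, avoids this machinery entirely and yields $\cL(U_\del)\le C\del$ directly; the seam issue you flag is real but standard, handled by a Lebesgue-number argument after shrinking $\eta$. What each approach buys: yours is self-contained and elementary, needing only the Lipschitz graph structure; the paper's identifies the sharp limiting constant $2\cH^{d-1}(\p A)$, which is not needed for the lemma itself but underlies the later discussion (Remark \ref{remark:MainThm}) relating $\limsup_{\rho\to0^+}\|\nabla\chi_\rho\|_{L^1(\Om)}$ to the TV eigenvalue $\cH^{d-1}(\p A)/\cL(A)$, a connection your cruder uniform bound would not capture.
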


In particular, for $p=\infty$, \deq{eq:CondLpEst} reduces to $\del |g_\del|\le C_1$ a.e.\ in $\R^d$, for all $\del\in(0,\eta]$.
Also note that for $p=1$, the conclusion of the lemma is trivial.

In the following, we assume $\eta>0$ sufficiently small such that
\begin{equation}\label{eq:USetsSep}
	A^k_\eta \ne \emptyset
		\quad
		\forall\, k ,
	\qquad
	\ol{S_\eta}\cap \ol{U_\eta} =\emptyset ,
	\qquad
	\ol{U^k_\eta}\cap \ol{U^j_\eta} = \emptyset
		\quad
		\forall\, k\ne j ,
\end{equation}
and for each connected component $E_\eta$ of $D_\eta\setminus A_\eta$, there holds $\cH^{d-1}(\p E_\eta \cap \p\Om)>0$.
In other words, we assume for $\del\in(0,\eta]$, that all $\del$-interiors $A^k_\del$ of $A^k$ are non-empty, that all $\del$-neighborhoods of the interfaces of the medium, $S_\del$ and $U^k_\del$, do not intersect, and that the only parts of the open complement, $D_\del$, isolated from the boundary $\p\Om$ are the $\del$-interiors $A^k_\del$ of the inclusions $A^k$.
Since the boundaries of $A^1,\ldots,A^K$ are mutually disjoint, and $A^k\sbt\sbt\Om\setminus S$, $k=1,\ldots,K$, such an $\eta>0$ exists.
We further suppose that for each $m=1,\ldots,M$,
\begin{subequations}\label{eq:approx_reg0_reg}
\begin{equation}\label{eq:approx_reg0}
	\nabla\chi^{0,m}_\del\in L^\infty(\Om),
	\quad
	\supp{\nabla\chi^{0,m}_\del} \sbt \ol{S_\del},
	\qquad
	\forall\, \del\in(0,\eta],
\end{equation}
and that for each $k=1,\ldots,K$,
\begin{equation}\label{eq:approx_reg}
	\nabla\chi^k_\del\in L^\infty(\Om),
	\quad
	\supp{\nabla\chi^k_\del} \sbt \ol{U^k_\del},
	\qquad
	\forall\, \del\in(0,\eta].
\end{equation}
\end{subequations}
These assumptions imply that
\begin{equation}\label{eq:grad_u0_tildeu_supp}
	\supp{\nabla u^{0}_\del} \sbt \ol{S_\del},
	\qquad
	\supp{\nabla \wt{u}_\del} \sbt \ol{U_\del},
\end{equation}
and 
\begin{equation}\label{eq:mu_eps_D_del}
	\mu_\veps[u_\del]
	\ = \ \veps^{-1}
	\qquad\text{a.e.\ in $D_\del$.}
\end{equation}
By \deq{eq:approx_reg0_reg}, for each $\del\in(0,\eta]$, $\nabla u_\del\in L^\infty(\Om)$, and thus, due to \deq{eq:LEstMu} with $w=u_\del$, the operator $L_\veps[u_\del]$ is uniformly elliptic in $\Om$, for $\veps>0$.
A simple but useful conclusion we can draw from \deq{eq:mu_eps_D_del} is
\begin{equation}\label{eq:grad_D_del_B_est}
	\|\nabla \vphi\|_{L^2(D_\del)}^2 \le \veps B[\vphi,\vphi]
	\qquad
	\forall\, \vphi\in H^1(\Om).
\end{equation}

For the approximation $\vphi_0$ of the background $u^0$, we have the following estimate.

\begin{lemma}\label{lem:background_estimation}
For every $\veps>0$ and $\del\in(0,\eta]$, there holds
\begin{equation}
    \|\nabla \vphi_0\|_{L^2(D_{\delta})}^2
    \ \leq\   \varepsilon\, \|\nabla u^0_\del\|_{L^1(\Om)} .
\end{equation}
\end{lemma}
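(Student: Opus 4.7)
The plan is to exploit the Dirichlet energy characterization of $\vphi_0$ together with a carefully chosen competitor, then invoke the pointwise bound $\mu_\veps[u_\del]|\nabla u_\del|\le 1$ on the support of $\nabla u^0_\del$.

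First, I would establish that $\vphi_0$ is the $B$-energy minimizer among admissible functions. Given any $w\in\cV^\del$ with $w=u_\del$ on $\p\Om$, the difference $w-\vphi_0$ lies in $\cV^\del_0$, so the weak formulation \deq{eq:phi0BVP_weakForm} gives $B[\vphi_0,w-\vphi_0]=0$, hence $B[\vphi_0,\vphi_0]=B[\vphi_0,w]$. Since $B$ is a symmetric positive semidefinite bilinear form, the Cauchy--Schwarz inequality applied to $B$ then yields
\[
    B[\vphi_0,\vphi_0]\ \le\ B[w,w].
\]

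The natural competitor is $w=u^0_\del$: by the decomposition \deq{eq:u_del_decomp} and the fact that $\wt u_\del\in\cV^\del_0$ vanishes on $\p\Om$, we have $u^0_\del=u_\del$ on $\p\Om$, so $u^0_\del$ is admissible. Next I would bound $B[u^0_\del,u^0_\del]=\angb{\mu_\veps[u_\del]\nabla u^0_\del,\nabla u^0_\del}$ pointwise. By \deq{eq:grad_u0_tildeu_supp}, $\supp(\nabla u^0_\del)\sbt\ol{S_\del}$ and $\supp(\nabla\wt u_\del)\sbt\ol{U_\del}$, and \deq{eq:USetsSep} guarantees $\ol{S_\del}\cap\ol{U_\del}=\emptyset$ for $\del\in(0,\eta]$. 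Consequently $\nabla\wt u_\del$ vanishes on $\supp(\nabla u^0_\del)$, so $\nabla u_\del=\nabla u^0_\del$ there, and the key bound \deq{eq:REstMu} gives $\mu_\veps[u_\del]\,|\nabla u^0_\del|=\mu_\veps[u_\del]\,|\nabla u_\del|\le 1$ on that set. Therefore
\[
    B[u^0_\del,u^0_\del]\ =\ \int_{\supp(\nabla u^0_\del)}\mu_\veps[u_\del]\,|\nabla u^0_\del|^2\,dx\ \le\ \int_\Om |\nabla u^0_\del|\,dx\ =\ \|\nabla u^0_\del\|_{L^1(\Om)}.
\]

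Finally I would combine this with the inequality \deq{eq:grad_D_del_B_est}, which follows from $\mu_\veps[u_\del]=\veps^{-1}$ on $D_\del$, applied to $\vphi=\vphi_0$:
\[
    \|\nabla\vphi_0\|_{L^2(D_\del)}^2\ \le\ \veps\,B[\vphi_0,\vphi_0]\ \le\ \veps\,B[u^0_\del,u^0_\del]\ \le\ \veps\,\|\nabla u^0_\del\|_{L^1(\Om)},
\]
yielding the claim. There is no real obstacle here; the only point that needs attention is verifying that the two gradient supports are geometrically separated so that the pointwise product bound \deq{eq:REstMu}, which only controls $\mu_\veps[u_\del]\,|\nabla u_\del|$, can be transferred to $\mu_\veps[u_\del]\,|\nabla u^0_\del|$.
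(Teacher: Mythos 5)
Your proof is correct and follows essentially the same route as the paper's: energy minimization of $B$ over the affine space $u_\del+\cV^\del_0$ with competitor $u^0_\del$ (admissible since $\wt u_\del\in\cV^\del_0$), the support separation $\ol{S_\del}\cap\ol{U_\del}=\emptyset$ from \deq{eq:USetsSep} to identify $\nabla u_\del=\nabla u^0_\del$ where it matters, the pointwise bound \deq{eq:REstMu}, and finally \deq{eq:grad_D_del_B_est}. The only cosmetic differences are that you derive the minimization property via Galerkin orthogonality and Cauchy--Schwarz instead of citing Dirichlet's principle, and you apply \deq{eq:REstMu} with $w=u_\del$ directly rather than first rewriting $\mu_\veps[u_\del]=\mu_\veps[u^0_\del]$ on $S_\del$ as the paper does.
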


The estimates for the eigenfunctions $\vphi_j$ are based on the following simple result.

\begin{proposition}\label{prop:eigFunc_est0}
For every $\veps>0$ and $\del\in(0,\eta]$ there holds
\begin{equation}
	\|\nabla \vphi_j\|_{L^2(D_\del)}^2
	\ \leq\  \lam_j(\veps,\del)\, \veps
	\qquad
	\forall\, j\ge 1 .
\end{equation}
\end{proposition}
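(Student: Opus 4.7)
The plan is to combine two facts already available in the excerpt: the lower bound \deq{eq:grad_D_del_B_est}, which says that on $D_\del$ the gradient is controlled by the bilinear form $B$ because $\mu_\veps[u_\del]=\veps^{-1}$ there, together with the variational characterization of the eigenfunctions $\vphi_j$ through \deq{eq:eigenValProb_weakForm}. Since $\vphi_j\in\cV^\del_0\sbt H^1(\Om)$, estimate \deq{eq:grad_D_del_B_est} applies verbatim and yields
\[
    \|\nabla \vphi_j\|_{L^2(D_\del)}^2 \ \le\ \veps\, B[\vphi_j,\vphi_j].
\]

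Next I would evaluate $B[\vphi_j,\vphi_j]$ by testing the weak eigenvalue equation \deq{eq:eigenValProb_weakForm} against $v=\vphi_j\in\cV^\del_0$, which gives
\[
    B[\vphi_j,\vphi_j] \ =\ \lam_j\angb{\vphi_j,\vphi_j} \ =\ \lam_j\|\vphi_j\|^2.
\]
Because the basis $(\vphi_k)$ is $L^2$-orthonormal, $\|\vphi_j\|=1$, so $B[\vphi_j,\vphi_j]=\lam_j(\veps,\del)$. Substituting this into the previous inequality yields the claimed bound
\[
    \|\nabla \vphi_j\|_{L^2(D_\del)}^2 \ \le\ \lam_j(\veps,\del)\,\veps.
\]

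There is essentially no obstacle here: the two ingredients are already in place, and the argument is a one-line application of the eigenvalue identity followed by the coercivity estimate on $D_\del$. The only things to verify in passing are that $\vphi_j$ is admissible as a test function in \deq{eq:eigenValProb_weakForm} (true since $\vphi_j\in\cV^\del_0$) and that \deq{eq:grad_D_del_B_est} is applicable (true since $\cV^\del_0\sbt H^1(\Om)$). The proof is complete in two short lines.
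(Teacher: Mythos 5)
Your proof is correct and coincides with the paper's own argument: both combine the coercivity estimate \deq{eq:grad_D_del_B_est} on $D_\del$ with the weak eigenvalue relation \deq{eq:eigenValProb_weakForm} tested at $v=\vphi_j$, using $\|\vphi_j\|=1$ to conclude $B[\vphi_j,\vphi_j]=\lam_j$. No gaps; the admissibility checks you note in passing are exactly the right ones.
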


\begin{proof}
Since for each $j$, $\|\vphi_j\|=1$, the proposition follows from \deq{eq:grad_D_del_B_est} and \deq{eq:eigenValProb_weakForm}.
\end{proof}

Thus, to show that an eigenfunction $\vphi_k$ is ``almost'' piecewise constant, we need to estimate the corresponding eigenvalue $\lam_k$ of $L_\veps[u_\del]$ in $\cV^\del_0$, 
for which we rely on the following lemma.

\begin{lemma}\label{lem:eigVal_est}
There exists a constant $C$, independent of $\hat u^1,\ldots,\hat u^K$ and $\hat u^{0,1},\ldots,\hat u^{0,M}$ such that for every $\veps>0$, $\del\in(0,\eta]$, and $k=1,\ldots,K$, there holds
\begin{equation}\label{eq:ThmEst}
	\lam_k\ \le\  \frac{C\, |\tau|}{\min_{j} |\hat u^j|} ,
	\quad
	\tau=(\tau_k)\in\R^K ,
	\quad
	\tau_k \ =\ \tau_k(\del) \ =\  \|\nabla \chi^k_\del\|_{L^1(\Om)} .
\end{equation}
%
%
\end{lemma}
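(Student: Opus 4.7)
\medskip

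\textbf{Plan.} The natural route is to apply the Courant--Fischer min--max principle,
\[
    \lam_k \ =\ \min_{V\sbt \cV^\del_0,\ \dim V=k} \ \max_{v\in V\setminus\{0\}} \frac{B[v,v]}{\|v\|^2},
\]
and since $\lam_k\le \lam_K$ for $k\le K$, it suffices to exhibit a single $K$-dimensional test subspace of $\cV^\del_0$ on which the Rayleigh quotient is controlled by $C|\tau|/\min_j|\hat u^j|$. The obvious candidate is $V_\del=\Span\{\chi^1_\del,\ldots,\chi^K_\del\}\sbt\cV^\del_0$, i.e.\ the finite-element surrogates of the characteristic functions of the inclusions. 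So the plan is first to check $\dim V_\del=K$ for $\del$ small, then to estimate the numerator using the explicit form of $\mu_\veps[u_\del]$ on the supports $\ol{U^k_\del}$, and finally to bound the denominator from below via the Gram matrix.

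\textbf{Numerator.} For $v=\sum_{j=1}^K c_j\chi^j_\del$, since by \deq{eq:USetsSep} the supports $\ol{U^j_\del}$ are mutually disjoint and also disjoint from $\ol{S_\del}$, the cross-terms in $|\nabla v|^2$ vanish and
\[
    B[v,v]\ =\ \sum_{j=1}^K c_j^2 \int_{U^j_\del} \mu_\veps[u_\del]\,|\nabla\chi^j_\del|^2.
\]
On $U^j_\del$ the only contribution to $\nabla u_\del$ comes from $\hat u^{j}\nabla\chi^j_\del$, so $|\nabla u_\del|=|\hat u^j|\,|\nabla\chi^j_\del|$ there; combined with \deq{eq:REstMu} applied to $w=u_\del$ this gives
\[
    \mu_\veps[u_\del](x)\,|\nabla\chi^j_\del(x)| \ \le\ |\hat u^j|^{-1} \qquad \text{a.e.\ in } U^j_\del.
\]
Hence $\int_{U^j_\del} \mu_\veps[u_\del]|\nabla\chi^j_\del|^2 \le \tau_j/|\hat u^j|$, and therefore
\[
    B[v,v]\ \le\ \sum_{j=1}^K \frac{\tau_j}{|\hat u^j|}\,c_j^2 \ \le\ \frac{|\tau|}{\min_j|\hat u^j|}\,|c|^2,
\]
after using $\max_j\tau_j\le|\tau|$ (the $\ell^2$-norm).

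\textbf{Denominator and conclusion.} Because $\chi^j_\del\to\chi^j$ in $L^2(\Om)$, the Gram matrix $G^\del_{ij}=\angb{\chi^i_\del,\chi^j_\del}$ converges entrywise to $G^0_{ij}=\cL(A^i\cap A^j)$. The key geometric observation is that $G^0$ is positive definite: the sets $A^1,\ldots,A^K$ have mutually disjoint boundaries, so their Boolean algebra decomposes $\bigcup A^j$ into finitely many atoms of positive measure, and a standard hierarchical argument (resolving nested inclusions inward-out) shows that $\sum c_j\chi^j=0$ a.e.\ forces all $c_j=0$. Hence, after possibly shrinking $\eta$, we obtain a uniform lower bound $\sig_{\min}(G^\del)\ge c_0>0$ for all $\del\in(0,\eta]$, which yields $\|v\|^2\ge c_0|c|^2$. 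Combining with the numerator estimate gives the Rayleigh quotient bound and, via min--max, the claimed bound on $\lam_k$ with $C=1/c_0$ depending only on the geometry of $A^1,\ldots,A^K$ and not on the amplitudes $\hat u^j$ or $\hat u^{0,m}$.

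\textbf{Main obstacle.} The only subtle point is the \emph{uniform} positivity of $\sig_{\min}(G^\del)$ over the whole range $\del\in(0,\eta]$. Pointwise convergence $G^\del\to G^0$ gives this only in the limit $\del\to0$; away from $0$ one must invoke the continuity of the approximation scheme and possibly decrease $\eta$ so that $\sig_{\min}(G^\del)\ge\frac{1}{2}\sig_{\min}(G^0)$ throughout. Once this is in place, the rest of the argument is a clean computation.
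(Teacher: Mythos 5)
Your strategy is essentially the paper's: test the Rayleigh quotient with linear combinations $v=\sum_j c_j\chi^j_\del$, bound the numerator using $t\,\hat\mu_\veps(t)\le 1$ together with the disjointness of $\ol{S_\del},\ol{U^1_\del},\ldots,\ol{U^K_\del}$ (your diagonal computation $B[v,v]\le\sum_j c_j^2\tau_j/|\hat u^j|$ is correct and even slightly cleaner than the paper's, which bounds $|\nabla\psi_\del|\le|\nabla\wt u_\del|/\min_j|\hat u^j|$ and then applies Cauchy--Schwarz), and bound the denominator below via a Gram matrix. The only structural difference in the variational step is cosmetic: the paper uses the characterization \deq{eq:rayleigh_quotient} and chooses the coefficient vector $\what\psi$ so that $\psi_\del\perp\vphi_1,\ldots,\vphi_{n-1}$ (an underdetermined homogeneous system since $n\le K$), while you use the subspace form of Courant--Fischer with $V_\del=\Span\{\chi^1_\del,\ldots,\chi^K_\del\}$; these are interchangeable here.

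The genuine gap is precisely the point you flag yourself, and your proposed repair does not close it. Entrywise convergence $G^\del\to G^0$ yields $\sig_{\min}(G^\del)\ge\tfrac12\sig_{\min}(G^0)$ only for $\del$ below some unknown threshold $\eta'$, i.e.\ it proves a statement weaker than the lemma, which asserts the bound for \emph{all} $\del\in(0,\eta]$ with $\eta$ fixed by \deq{eq:USetsSep}. ``Continuity of the approximation scheme'' in $\del$ is not among the hypotheses (only $L^2$-convergence as $\del\to0$ and the support conditions \deq{eq:approx_reg} are assumed), and on the remaining range $[\eta',\eta]$ per-$\del$ positive definiteness gives no uniform lower bound. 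The paper's Lemma \ref{lem:CharFuncLinInd} closes this with a device your write-up misses: since $\supp{\nabla\chi^j_\del}\sbt\ol{U^j_\del}$, each $\chi^j_\del$ coincides with $\chi^j$ on $D_\del\spt D_\eta$, hence
\begin{equation*}
	\|v\|^2\ \ge\ \Big\|\sum_{j=1}^K c_j\chi^j\Big\|_{L^2(D_\eta)}^2\ =\ c^{T} M c ,
	\qquad
	M_{kj}=\cL\brb{A^k\cap A^j\cap D_\eta},
\end{equation*}
with a single $\del$-independent matrix $M$, so uniformity in $\del$ (and in $\veps$) is automatic. Note this requires linear independence of the \emph{restrictions} of $\chi^1,\ldots,\chi^K$ to $D_\eta$, which is strictly stronger than the full-domain independence your ``atoms of positive measure'' argument provides, because the $\eta$-collars around all boundaries are excised from $D_\eta$; the paper proves it (assertions \ref{lem:Aser1}--\ref{lem:Aser2} of Lemma \ref{lem:CharFuncLinInd}) by exhibiting inductively an outermost set $A^j$ whose $\eta$-interior meets $D_\eta$ in a nonempty open set avoided by all the others. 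With this substitution for your denominator step, your proof is complete and yields the claimed bound with $C$ depending only on the geometry, as required.
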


Together, the results above yield the following theorem.

\begin{theorem}\label{thm:main}
Let $u$ be given by \deq{eq:u_decomp}, the approximation, $u_\del$, of $u$ be given by \deq{eq:u_del_decomp}, $\vphi_0$ be given by \deq{eq:phi0BVP_weakForm}, and $(\lam_k,\vphi_k)$ with $k\ge 1$ satisfy \deq{eq:eigenValProb_weakForm}, where $(\lam_k)_k$ is non-decreasing and $(\vphi_k)_k$ orthonormal in $L^2(\Om)$.
Suppose $A^1,\ldots,A^K\sbt\Om$ and $A^{0,1},\ldots,A^{0,M}\sbt\Om$ have Lipschitz boundaries, and $\eta>0$ such that \deq{eq:USetsSep} is satisfied.
If \deq{eq:approx_reg0_reg} hold true, and there exists a constant $C$ such that for every $\del\in(0,\eta]$, each of the functions $\chi_\del=\chi^{0,m}_\del$, $m=1,\ldots,M$, or $\chi_\del=\chi^{k}_\del$, $k=1,\ldots,K$, satisfies
\begin{equation}\label{eq:chi_del_Linf_est}
	\del \|\nabla \chi_\del \|_{L^\infty(\Om)}
	\ \le\ C ,
\end{equation}
then there exists a constant $C_1$, independent of the coefficients $\hat u^{0,1},\ldots,\hat u^{0,M}$ and $\hat u^{1},\ldots,\hat u^{K}$ such that for every $\veps>0$ and $\del\in(0,\eta]$, the following estimates hold:
\begin{equation}\label{eq:est_BG}
	\|\nabla \vphi_0\|_{L^2(D_{\delta})}^2
	\ \leq\ C_1\max_m|\hat u^{0,m}|\, \varepsilon ,
\end{equation}
\begin{equation}\label{eq:est_eigs}
	\|\nabla \vphi_k\|_{L^2(D_\del)}^2 \ \le\  \frac{C_1}{\min_{j} |\hat u^j|}\, \veps ,
	\quad\text{and}\quad
	\lam_k \ \le\  \frac{C_1}{\min_{j} |\hat u^j|} ,
	\quad
	k=1,\ldots,K .
\end{equation}
Moreover, for each Lipschitz domain $V\sbt D_\eta\setminus A_\eta$ with $\cH^{d-1}(\p V\cap\p\Om)>0$, there exists a constant $C_2$, independent of the coefficients $\hat u^{0,1},\ldots,\hat u^{0,M}$ and $\hat u^{1},\ldots,\hat u^{K}$ such that for every $\veps>0$ and $\del\in(0,\eta]$, the following estimates are satisfied
\begin{equation}\label{eq:L2_est_BG}
	\|u^0-\vphi_0\|_{L^2(V)}^2 \le C_2\max_m|\hat u^{0,m}|\, \varepsilon ,
\end{equation}
\begin{equation}\label{eq:L2_est_eigs}
	\|\vphi_k\|_{L^2(V)}^2 \le \frac{C_2}{\min_{j} |\hat u^j|}\, \veps ,
	\qquad
	k=1,\ldots,K .
\end{equation}
\end{theorem}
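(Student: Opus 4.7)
The plan is to combine the three preparatory results (Lemmas \ref{lem:background_estimation}, \ref{lem:eigVal_est} and Proposition \ref{prop:eigFunc_est0}) with the $L^\infty$-hypothesis \deq{eq:chi_del_Linf_est} to derive the gradient bounds \deq{eq:est_BG} and \deq{eq:est_eigs}, and then upgrade them to the $L^2$-bounds \deq{eq:L2_est_BG} and \deq{eq:L2_est_eigs} via a Poincar\'e--Friedrichs inequality on the Lipschitz subdomain $V$.

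The first step is to obtain a uniform (in $\del$) $L^1$-bound for the gradient of each characteristic-function approximation. By the support conditions \deq{eq:approx_reg0_reg}, $\nabla\chi^{0,m}_\del$ and $\nabla\chi^{k}_\del$ are concentrated in the $\del$-tubular neighborhoods $\ol{S_\del}$ and $\ol{U^k_\del}$ of Lipschitz $(d{-}1)$-surfaces, so part \ref{lem:SuffCondTVEst_g} of Lemma \ref{lem:SuffCondTVEst} applied with $p=\infty$, together with \deq{eq:chi_del_Linf_est}, produces $\|\nabla\chi^{0,m}_\del\|_{L^1(\Om)},\ \|\nabla\chi^k_\del\|_{L^1(\Om)}\le C$ uniformly in $\del\in(0,\eta]$. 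Expanding $u^0_\del=\sum_m\hat u^{0,m}\chi^{0,m}_\del$ and inserting the resulting $L^1$-bound into Lemma \ref{lem:background_estimation} immediately yields \deq{eq:est_BG}. Likewise, the vector $\tau$ of Lemma \ref{lem:eigVal_est} satisfies $|\tau|\le\sqrt{K}\,C$, so Lemma \ref{lem:eigVal_est} gives the eigenvalue bound in \deq{eq:est_eigs}; feeding this bound into Proposition \ref{prop:eigFunc_est0} then yields the gradient estimate in \deq{eq:est_eigs}.

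For the $L^2$-estimates on $V$, the key geometric observation is that, since $V\sbt D_\eta\setminus A_\eta$, the set $V$ meets neither any interface of $u^0$ nor any inclusion $A^k$. Hence on $V$ the background $u^0$ is a single constant $\hat u^{0,m_0}$ and $\wt u\equiv 0$. From the support conditions \deq{eq:grad_u0_tildeu_supp} and the interpolation property of the approximations $\chi^{\cdot}_\del$, the same is true of $u_\del$ on $V\sbt D_\del$, and in particular $u_\del=\hat u^{0,m_0}$ on $\p V\cap\p\Om$. Consequently $\vphi_0-u^0\in H^1(V)$ vanishes on $\p V\cap\p\Om$ (since $\vphi_0=u_\del$ there), and $\vphi_k$ also vanishes on $\p V\cap\p\Om$ (since $\vphi_k\in\cV^\del_0$). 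The Poincar\'e--Friedrichs inequality on the Lipschitz domain $V$ with vanishing trace on the subset $\p V\cap\p\Om$ of positive $\cH^{d-1}$-measure then gives
\begin{equation*}
\|\vphi_0-u^0\|_{L^2(V)}^2\le C(V)\,\|\nabla\vphi_0\|_{L^2(V)}^2,\qquad
\|\vphi_k\|_{L^2(V)}^2\le C(V)\,\|\nabla\vphi_k\|_{L^2(V)}^2,
\end{equation*}
and bounding the right-hand sides by the corresponding norms on $D_\del\spt V$ and applying \deqr{eq:est_BG}{eq:est_eigs} completes the proof.

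The main obstacle will be the geometric bookkeeping in this last step: verifying that the Dirichlet trace $u_\del|_{\p V\cap\p\Om}$ really equals the constant $\hat u^{0,m_0}$. This rests on the fact that $V$ sits at distance at least $\eta$ from $S$ and from every $\p A^k$, so the supports of $\nabla\chi^{0,m}_\del$ and $\nabla\chi^k_\del$ never reach $V$, and one still has to argue that the interpolants $\chi^{0,m}_\del$ and $\chi^k_\del$ actually coincide with the corresponding characteristic functions on $V$. This is essentially immediate for nodal FE interpolants, but requires more care in the convolution setting---which is the reason such approximations are treated separately in Corollary \ref{cor:main}. Once this geometric fact is granted, the remaining arguments are direct applications of the earlier lemmas and the Poincar\'e--Friedrichs inequality.
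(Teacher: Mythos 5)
Your proposal is correct and takes essentially the same route as the paper's own (very terse) proof: the gradient bounds \deq{eq:est_BG}, \deq{eq:est_eigs} via Lemma \ref{lem:SuffCondTVEst} with $p=\infty$ combined with Lemmas \ref{lem:background_estimation}, \ref{lem:eigVal_est} and Proposition \ref{prop:eigFunc_est0}, and then the $L^2$ bounds \deq{eq:L2_est_BG}, \deq{eq:L2_est_eigs} via the Poincar\'e inequality on $V$, using $\cH^{d-1}(\p V\cap\p\Om)>0$ and $V\sbt D_\eta\sbt D_\del$. One incidental correction: Corollary \ref{cor:main} treats the mollification and FE cases separately in order to verify the hypothesis \deq{eq:chi_del_Linf_est}, not the trace coincidence you flag at the end --- for the standard mollifier that coincidence is in fact immediate, since $\zeta_\del * \chi$ agrees exactly with $\chi$ at distance greater than $\del$ from the interface; your remark that this exactness-away-from-the-tubes is an implicit ingredient (glossed over in the paper's two-line proof) is nonetheless well taken.
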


\begin{proof}
Estimates \deq{eq:est_BG} and \deq{eq:est_eigs} easily follow from Lemma \ref{lem:SuffCondTVEst} with $p=\infty$, Lemmas \ref{lem:background_estimation} and~\ref{lem:eigVal_est}, and Proposition \ref{prop:eigFunc_est0}.
Estimates \deq{eq:L2_est_BG} and \deq{eq:L2_est_eigs} follow from the Poincar{\'e} inequality, and estimates \deq{eq:est_BG} and \deq{eq:est_eigs}, respectively, since $\cH^{d-1}(\p V \cap \p\Om)>0$.
\end{proof}

Essentially, estimates \deq{eq:est_BG} and \deq{eq:est_eigs} imply that $\vphi_0$ and the first $K$ eigenfunctions $\vphi_k$ of $L_\veps[u_\del]$ are ``almost'' constant in each connected component of $D_\del$.
In particular, $\vphi_0$ is almost constant in each connected component of $D_\del\setminus A_\del$.
Since the Hausdorff measure of $\p\Om\cap\p A^{0,m}$ is positive for each $m=1,\ldots,M$, and $\vphi_0$ coincides with $u^0_\del$ on $\p\Om$, $\vphi_0$ indeed approximates $u^0_\del$ well in $D_\del\setminus A_\del$.
Similarly, for $k=1,\ldots,K$, $\vphi_k$ is almost constant in each connected component of $D_\del\setminus A_\del$ and vanishes on $\p\Om$; therefore, every $\vphi_k$ is small in $D_\del\setminus A_\del$.
Consequently, $u_\del$ can be well approximated in $\vphi_0+\Phi_K$, for instance, by its $L^2$-best approximant $\vphi_0+\Pi_K(u_\del-\vphi_0)$, where $\Pi_K:\cV^\del\to\Phi_K$ denotes the standard $L^2$-orthogonal projection
\begin{equation}\label{eq:orthogonal-projector-PK}
	\angb{v-\Pi_Kv,\vphi}=0 ,
	\qquad
	\forall \, \vphi \in \Phi_K.
\end{equation}

From Theorem \ref{thm:main}, we may deduce estimates \deq{eq:est_BG}-\deq{eq:L2_est_eigs} for specific methods of approximation.

\begin{corollary}\label{cor:main}
Suppose $A^1,\ldots,A^K$ and $A^{0,1},\ldots,A^{0,M}$ have Lipschitz boundaries.
Estimates \deq{eq:est_BG}-\deq{eq:L2_est_eigs} hold true in each of the two following cases:
\begin{enumerate}
\item
For each $\del\in(0,\eta]$,  $u_\del$ is the convolution of $u$ with the standard mollifier (e.g., \cite{EG1992}), and $\cV^\del=H^1(\Om)$.

\item
For each $\del\in(0,\eta]$, $u_\del$ is the Lagrange interpolant of $u$ in an $H^1$-conforming FE space $V_\del$ associated with a simplex mesh $\cT_\del$ with mesh size $\del$, where the family of meshes $\{\cT_\del\}_{\del\in(0,\eta]}$ is regular and quasi-uniform (see, e.g., \cite{Q2008}), and either $\cV^\del=V_\del$ or $\cV^\del=H^1(\Om)$.
\end{enumerate}
\end{corollary}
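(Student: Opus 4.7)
The plan is, for each of the two cases, to verify the hypotheses of Theorem~\ref{thm:main}, namely the $L^2$-convergence $\chi_\del\to\chi$, the support condition \deq{eq:approx_reg0_reg}, and the $L^\infty$-gradient bound \deq{eq:chi_del_Linf_est}, along with the memberships $\chi^{0,m}_\del\in\cV^\del$ and $\chi^k_\del\in\cV^\del_0$. Once these are in place, the conclusion is a direct invocation of Theorem~\ref{thm:main}.

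For the mollification case, I would take $\chi_\del=\chi*\eta_\del$ with $\eta_\del$ the standard mollifier supported in $B(0,\del)$, and rely on routine convolution properties. The $L^2$-convergence is classical. For the support condition, if $\dist(x,\p A)>\del$ then $B(x,\del)$ lies entirely in $A$ or entirely in $A^c$, so $\chi*\eta_\del$ equals $1$ or $0$ on a neighborhood of $x$ and $\nabla\chi_\del(x)=0$. The $L^\infty$-bound follows from $\nabla\chi_\del=\chi*\nabla\eta_\del$ together with Young's inequality, giving $\|\nabla\chi_\del\|_{L^\infty}\le\|\nabla\eta_\del\|_{L^1}\le C/\del$. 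Finally, when $A^k\sbt\sbt\Om$ and $\del$ is small enough, $\chi^k_\del$ vanishes near $\p\Om$, so $\chi^k_\del\in H^1_0(\Om)=\cV^\del_0$.

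For the FE interpolation case, let $\chi_\del$ denote the Lagrange interpolant of $\chi$ on the simplex mesh $\cT_\del$. On any simplex $T\in\cT_\del$ whose vertices all lie in $A$ (respectively in $A^c$), all nodal values agree, so $\chi_\del\equiv 1$ (resp.\ $\chi_\del\equiv 0$) on $T$ and $\nabla\chi_\del=0$ there. Hence $\nabla\chi_\del$ can only be nonzero on simplices with mixed vertex values, which must intersect $\p A$ and are therefore contained in the $C\del$-neighborhood of $\p A$ for some $C$ depending on the shape-regularity of the mesh. The $L^\infty$-bound $|\nabla\chi_\del|\le C/\del$ follows from a standard inverse estimate on a quasi-uniform simplicial mesh applied to a piecewise linear function with nodal values in $\{0,1\}$. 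For the $L^2$-convergence, I would note that $\chi_\del$ and $\chi$ differ only on the union of simplices intersecting $\p A$, a set of Lebesgue measure $O(\del)$ by Lemma~\ref{lem:SuffCondTVEst}(\ref{lem:SuffCondTVEst_L}). The membership $\chi^k_\del\in\cV^\del_0$ is automatic for small $\del$, since all FE nodes within distance $\del$ of $\p\Om$ carry the value $0$ when $A^k\sbt\sbt\Om$.

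The main obstacle I anticipate is the mismatch between the $\del$-neighborhoods in \deq{eq:setU}--\deq{eq:setA} and the $C\del$-neighborhoods that naturally arise from the mollifier radius in the first case and from the simplex diameter in the second. This is resolved either by rescaling $\del$ by a fixed constant before invoking Theorem~\ref{thm:main}, or by observing that the constants in that theorem depend only on the geometric constants of the approximation method; either way, \deq{eq:est_BG}--\deq{eq:L2_est_eigs} hold with possibly larger but still $\del$- and $\veps$-independent constants $C_1,C_2$.
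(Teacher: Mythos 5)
Your proposal is correct and follows essentially the same route as the paper: reduce to verifying the hypotheses of Theorem~\ref{thm:main} (chiefly the gradient bound \deq{eq:chi_del_Linf_est}), establish it for the mollifier case via the scaling $\|\nabla\zeta_\del\|_{L^1}\le C/\del$ (the paper's computation is exactly your Young's-inequality step), and handle the FE case via inverse estimates on a quasi-uniform mesh, which the paper leaves as ``follows easily.'' Your worry about a $\del$-versus-$C\del$ neighborhood mismatch is in fact vacuous here—the mollifier's support radius and the simplex diameters are both bounded by $\del$ itself, so $\supp{\nabla\chi_\del}\sbt\ol{U_\del}$ holds without rescaling—but your proposed fix would be harmless in any case.
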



\begin{proof}
For the proof, it is sufficient to show that in each case (i), (ii), the hypotheses of Theorem \ref{thm:main} are satisfied, i.e., that the approximation $\chi_\del$ of a characteristic function of a Lipschitz domain contained in $\Om$ satisfies \deq{eq:chi_del_Linf_est}.
Here we sketch the proof for (i); the result for (ii) follows easily from the definitions of a regular and quasi-uniform family of FE meshes and from basic properties of polynomial Lagrange interpolation.

Let $\chi$ be the characteristic function of a Lipschitz domain contained in $\Om$.
We extend $\chi$ to $\R^d$ by setting $\chi=0$ outside $\Om$, and set
\begin{equation}
	\chi_\del(x) = \zeta_\del*\chi = \int_{\R^d} \zeta_\del(x-y)\chi(y)\, d y ,
	\qquad
	\zeta_\del(x)=\del^{-d}\zeta(x/\del)
\end{equation}
with $\zeta$ the standard mollifier.
Since
\begin{equation}
	\nabla \chi_\del = \nabla \zeta_\del*\chi ,
\end{equation}
and $\|\chi\|_{L^\infty(\R^d)}=1$, we obtain
\begin{equation}
	\del |\nabla \chi_\del(x)| \le \|\chi\|_\infty\int_{|y|<\del} \del \, |\nabla \zeta_\del(y)|\, d y
		=\int_{|z|<1} |\nabla \zeta(z)|\, d z <\infty
\end{equation}
for a.e.\ $x\in\Om$, which concludes the proof.
\end{proof}

\begin{remark}\label{remark:MainThm}
Let us suppose, for simplicity, that $K=1$, $A^1=A$, $\hat u^1=1$, $u^0=0$, and the hypotheses of Theorem \ref{thm:main} are satisfied.
By following the proof for this simpler case, one finds that for small $\eta$ and $\veps$, $\lam_1$ is bounded from above by a constant arbitrarily close to
\begin{equation}\label{eq:Examp_LimitConst}
	\frac{\limsup_{\rho\to 0^+}\|\nabla\chi_\rho\|_{L^1(\Om)}}{\cL(A)} .
\end{equation}
For an appropriate approximation $\{\chi_\del\}_\del$ of the characteristic function $\chi^1$ of $A$, this constant essentially coincides with the eigenvalue
\begin{equation}\label{TV_EV}
	\lam=\frac{\cH^{d-1}(\p A)}{\cL(A)}
\end{equation}
of the (subdifferential of the) total variation (TV) functional associated with $\chi^1$ in two dimensions \cite{BCN2002}.
However, $\chi^1$ can be an eigenfunction of the TV functional associated with the eigenvalue \deq{TV_EV}, only if $A$ is convex and has boundary of class $C^{1,1}$.
In contrast, the conclusion of Theorem \ref{thm:main} is valid for any Lipschitz domain $A$; in particular, $A$ need not be convex.
Since the TV functional is only lower semi-continuous, we note that the quantities in \deq{eq:Examp_LimitConst} and \deq{TV_EV} may not be equal.

\end{remark}

\begin{remark}\label{rem:generalization}
The main results, given by Theorem \ref{thm:main} and Corollary \ref{cor:main}, essentially remain true if we replace the last inequality in \deq{eq:mu_cond} by the more general inequality \deq{eq:mu_cond_r} with $r\in[1,2]$.
The proofs only require slight modifications, as much of the analysis actually carries over from the case $r=1$.
The main adjustments are required in Lemmas \ref{lem:background_estimation} and \ref{lem:eigVal_est}, where estimates of $\mu_\veps[u_\del^0]|\nabla u_\del^0|$ and $\mu_\veps[\wt{u}_\del]|\nabla \wt{u}_\del|$ are employed.
Since the modifications for the two Lemmas are similar, we only address the former here.
Instead of the estimate
\begin{equation}
	\int_{S_\del} \mu_\veps[u^0_\del]\, |\nabla u^0_\del|^2\, dx
		\le \|\nabla u^0_\del\|_{L^1(\Om)}
\end{equation}
which relies on \deq{eq:mu_cond}, we obtain
\begin{equation}\label{eq:gen_B_est}
	\int_{S_\del} \mu_\veps[u^0_\del]\, |\nabla u^0_\del|^2\, dx
		\le \cL(S_\del)^{r-1}\|\nabla u^0_\del\|_{L^1(\Om)}^{2-r} 
\end{equation}
by using \deq{eq:mu_cond_r} and H\"older's inequality.
As a consequence, for $r\in(1,2]$, the conclusions of Theorem \ref{thm:main} and Corollary \ref{cor:main} essentially remain true, and can even be improved by introducing a small multiplicative term to the right hand sides of the inequalities.
\end{remark}

\subsection{Proofs}
\label{sec:MainProofs}

\begin{proof}[Proof of Lemma \ref{lem:SuffCondTVEst}]

\asserRef{lem:SuffCondTVEst_L}
We show that $\psi:[0,\eta]\to\R$ given by
\begin{equation}
	\psi\frb{\del}=\begin{cases}
			\del^{-1}\cL(U_\del) & \del\ne0 \\
			2\cH^{d-1}(\p A) & \del=0
		\end{cases}
\end{equation}
is bounded by proving that $\psi$ is continuous in $[0,\eta]$.
Since $A$ is a bounded Lipschitz domain, its boundary $\p A$ is $(d-1)$-rectifiable, i.e., there exists a Lipschitz function from a bounded subset of $\R^{d-1}$ onto $\p A$.
Hence the $(d-1)$-dimensional Minkowski content and the $(d-1)$-dimensional Hausdorff measure of $\p A$ coincide \cite[Theorem 3.2.39]{H1969}, that is
\begin{equation}\label{eq:MikowskiCont}
	\lim_{\del\to0^+}\del^{-1}\cL(U_\del)=2\cH^{d-1}(\p A) .
\end{equation}
Therefore $\lim_{\del\to0^+} \psi(\del)=\psi(0)$ and $\psi$ is continuous at $\del=0$.
Since $\rho(x)=\dist(x,\p A)$ is Lipschitz and satisfies $|\nabla\rho(x)|=1$, a.e.\ $x\in U_\del$ ~\cite[\S 6]{DZ2011}, we have
\begin{equation}
	\cL(U_\del)=\int_0^\del \cH^{d-1}\Brb{\rho^{-1}\brb{\{t\}}} \, d t ,
\end{equation}
by the co-area formula \cite[\S 3.4]{EG1992}.
Combining this with \deq{eq:MikowskiCont}, we conclude that the function $\del\mapsto\cL(U_\del)$ is continuous in $[0,\eta]$.
It follows that $\psi$ is continuous in $(0,\eta]$.
Since $\psi$ is also continuous at $\del=0$, we have that it is continuous in the entire closed interval $[0,\eta]$, and is, therefore, bounded.

\medskip
\asserRef{lem:SuffCondTVEst_g}
By H\"older's inequality we have
\begin{equation}
	\|g_\del\|_{L^1(\R^d)} =\int_{U_\del} |g_\del(x)|\, d x
		\le \Big(\cL(U_\del)\Big)^{1-1/p}\, \|g_\del\|_{L^p(\R^d)} ,
\end{equation}
which, together with \deq{eq:CondLpEst}, yields
\begin{equation}
	\|g_\del\|_{L^1(\R^d)} \le C_1 \Big(\del^{-1}\cL(U_\del)\Big)^{1-1/p} .
\end{equation}
Thus the conclusion follows from assertion \ref{lem:SuffCondTVEst_L}.
\end{proof}

\begin{proof}[Proof of Lemma \ref{lem:background_estimation}]
By Dirichlet's principle, $\vphi_0$ is the unique minimizer of the functional $I(v)=B[v,v]$ in $u_\del+\cV^\del_0$, i.e.,
\[
	B[\vphi_0,\vphi_0] \ \le\ B[u_\delta+v,u_\delta+v] ,
	\qquad
	\forall\, v\in\cV^\del_0 ,
\]
or, since $u^0_\delta-u_\delta=-\wt u_\del\in\cV^\del_0$, equivalently
\begin{equation}\label{eq:phi0_min}
	B[\vphi_0,\vphi_0] \le B[u^0_\delta+v,u^0_\delta+v] ,
	\qquad
	\forall\, v\in\cV^\del_0 .
\end{equation}
By \deq{eq:phi0_min} with $v=0$, and using \deq{eq:grad_u0_tildeu_supp}
\begin{equation}
	B[\vphi_0,\vphi_0] \leq B[\uBG_\del,\uBG_\del]
		=\int_{\Om} \mu_\veps[u_\del]|\nabla \uBG_\del|^2
		=\int_{S_\del} \mu_\veps[u_\del]|\nabla \uBG_\del|^2 .
\end{equation}
Then, using \deq{eq:mu_hatmu}, $\nabla u_\del=\nabla \uBG_\del$, which holds in $S_\del$, and \deq{eq:REstMu} we obtain
\begin{equation}
	B[\vphi_0,\vphi_0] \leq \int_{S_\del} \mu_\veps[u_\del]|\nabla \uBG_\del|^2
		=\int_{S_\del} \mu_\veps[\uBG_\del]|\nabla \uBG_\del|^2
		\leq \|\nabla \uBG_\delta\|_{L^1(S_\del)} .
\end{equation}
Finally, the conclusion follows from \deq{eq:grad_D_del_B_est}.
\end{proof}

The proof of Lemma \ref{lem:eigVal_est} requires the following result.

\begin{lemma}\label{lem:CharFuncLinInd}
If the sets $A^k_\eta$, $k=1,\ldots,K$, are nonempty and \deq{eq:USetsSep} is satisfied, for some $\eta>0$, then the following assertions hold true.
\begin{enumerate}
\item\label{lem:Aser1}
There exists $1\le j\le K$, such that
\begin{equation}\label{eq:lem:sepBound}
	\cB^j_\eta\setminus \bigcup_{\substack{k=1 \\ k\ne j}}^K \ol{\cB^k_\eta}\ne\emptyset ,
	\qquad
	\cB^\ell_\eta =A^\ell_\eta\cap D_\eta .
\end{equation}

\item\label{lem:Aser2}
The restrictions of the functions $\chi^k$, $k=1,\ldots,K$, to $D_\eta$ are linearly independent.\\

\item\label{lem:Aser3}
If for every $k=1,\ldots,K$, \deq{eq:approx_reg} is satisfied, then there exists a constant $C>0$, such that for every $\del\in[0,\eta]$ and $\what\psi=(\psi^k)\in\R^K$, the following estimates are satisfied:
\begin{equation}\label{eq:psi_delCoer}
	\|\psi_\del\|^2\ge \|\psi\|_{L^2(D_\eta)}^2 \ge C|\what\psi|^2 ,
\end{equation}
where $\psi=\sum_k\psi^k \chi^k$, and $\psi_\del=\sum_k\psi^k \chi^k_\del$.
\end{enumerate}
\end{lemma}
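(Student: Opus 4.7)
My plan is to exploit the tree (or forest) structure on $\{A^1,\ldots,A^K\}$ induced by the mutually-disjoint-boundaries hypothesis: any two sets $A^k,A^j$ are either strictly nested or have disjoint closures. Under the separation \deq{eq:USetsSep}, every descendant $A^k\subsetneq A^j$ has $\overline{A^k}$ at distance $>\eta$ from $\partial A^j$, and the closures of the maximal proper descendants of a fixed $A^j$ are pairwise disjoint. These geometric facts drive all three assertions.

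For \ref{lem:Aser1}, I would take $j$ outermost (not strictly contained in any other $A^k$), so every $k\ne j$ is either disjoint from $A^j$ (giving $\overline{\cB^k_\eta}\cap\cB^j_\eta=\emptyset$ at once from disjointness of closures) or satisfies $A^k\subsetneq A^j$. To produce a point of $\cB^j_\eta$ avoiding every descendant closure, observe that $A^j_\eta$ is connected for Lipschitz $A^j$ in the admissible range of $\eta$; if it were contained in $\bigcup_{A^k\subsetneq A^j}\overline{A^k}$ it would have to lie inside a single maximal descendant (the others having mutually disjoint closures), but $\overline{U^k_\eta}\cap\overline{U^j_\eta}=\emptyset$ forces $\partial A^k$ to be more than $\eta$ away from $\partial A^j$, so points of $A^j_\eta$ close to $\partial A^j$ cannot lie in $\overline{A^k}$. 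Any such point automatically sits in $D_\eta$ because $A^j\subset\subset\Omega\setminus S$.

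For \ref{lem:Aser2}, I would argue by contradiction: assume $\sum_k\alpha_k\chi^k=0$ on $D_\eta$ with $S=\{k:\alpha_k\ne 0\}$ non-empty, and pick $j\in S$ maximal for the nesting order restricted to $S$. Re-running the construction from \ref{lem:Aser1} but discarding only the descendants of $A^j$ that lie in $S$ produces $x_0\in A^j\cap D_\eta$ outside every $A^m$ with $m\in S\setminus\{j\}$; at $x_0$ the vanishing combination collapses to $\alpha_j=0$, contradicting $j\in S$.

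For \ref{lem:Aser3}, the support condition \deq{eq:approx_reg} forces $\chi^k_\del$ to be constant on each connected component of $\Omega\setminus\overline{U^k_\del}$, and the boundary condition $\chi^k_\del\in\cV^\del_0$ pins the outer component to $0$; thus on $D_\eta$ one has $\chi^k_\del=a^k_\del\chi^k$ for a constant $a^k_\del$ that tends to $1$ as $\del\to 0$ by $L^2$-convergence (and equals $1$ exactly in both cases of Corollary \ref{cor:main} by direct computation). This yields $\|\psi_\del\|^2\ge\|\psi_\del\|^2_{L^2(D_\eta)}=\|\sum_k a^k_\del\psi^k\chi^k\|^2_{L^2(D_\eta)}$, which reduces to $\|\psi\|^2_{L^2(D_\eta)}$ in those applications. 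For the remaining inequality, part \ref{lem:Aser2} implies the linear map $\what\psi\mapsto\psi|_{D_\eta}$ from $\R^K$ to $L^2(D_\eta)$ is injective, so its positive-definite quadratic form $\what\psi\mapsto\|\psi\|^2_{L^2(D_\eta)}$ attains a positive minimum $C$ on the unit sphere of $\R^K$, giving $\|\psi\|^2_{L^2(D_\eta)}\ge C|\what\psi|^2$. The main obstacle is the topological step in \ref{lem:Aser1}—showing that the connected set $A^j_\eta$ cannot be swallowed by closures of its proper descendants—since this is precisely where the quantitative $\eta$-separation hypothesis \deq{eq:USetsSep} is indispensable; once it is in hand, parts \ref{lem:Aser2} and \ref{lem:Aser3} follow by bookkeeping on the tree order and standard finite-dimensional compactness.
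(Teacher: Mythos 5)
Your part \ref{lem:Aser3} is essentially correct and matches the paper: the inequality $\|\psi_\del\|\ge\|\psi\|_{L^2(D_\eta)}$ comes from $\chi^k_\del$ agreeing with $\chi^k$ on $D_\del\supset D_\eta$ (the paper asserts this coincidence outright, so your extra care with the constants $a^k_\del$ is harmless), and the lower bound $\|\psi\|_{L^2(D_\eta)}^2\ge C|\what\psi|^2$ via injectivity of $\what\psi\mapsto\psi|_{D_\eta}$ plus finite-dimensional compactness is exactly the paper's Gram-matrix argument in different clothing. The problem is upstream, in parts \ref{lem:Aser1} and \ref{lem:Aser2}: your entire strategy rests on the claim that mutually disjoint boundaries force any two sets $A^k,A^j$ to be \emph{either strictly nested or have disjoint closures}, and that claim is false under the paper's hypotheses. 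Take $A^1=\{|x|<1.5\}$ (a disk) and $A^2=\{1<|x|<2\}$ (an annulus): both are connected Lipschitz domains, $\p A^1\cap\p A^2=\emptyset$, yet they overlap partially and neither contains the other. (The paper's own numerical example in Section \ref{sbSec:Numerical example} features two \emph{overlapping} sets $A^1$, $A^6$.) Consequently the nesting order on which you build — ``outermost'' $j$ in part \ref{lem:Aser1}, ``maximal descendant'' in part \ref{lem:Aser2}, and the dichotomy ``every $k\ne j$ is either disjoint from $A^j$ or satisfies $A^k\subsetneq A^j$'' — need not exist, and your case analysis simply omits the partially overlapping configuration. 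A secondary, unproven step is the connectedness of $A^j_\eta$, which you invoke but do not establish.

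The paper's proof of part \ref{lem:Aser1} sidesteps all order structure: pick any $x\in\p A_\eta$, where $A_\eta=\bcup_k A^k_\eta$. Since the boundary of a finite union lies in the union of the boundaries, $x\in\p A^j_\eta$ for some $j$, and because $\p A^j_\eta\sbt\ol{U^j_\eta}$ while the sets $\ol{U^k_\eta}$ are mutually disjoint by \deq{eq:USetsSep}, this $j$ is unique; moreover $x\notin A^k_\eta$ (as $x\notin A_\eta$) and $x\notin\p A^k_\eta$ give $x\notin\ol{A^k_\eta}$ for all $k\ne j$, and likewise $x\notin\ol{S_\eta}$. A sufficiently small neighborhood $W$ of $x$ then yields the nonempty open set $V=W\cap A^j_\eta\sbt\cB^j_\eta$ avoiding every $\ol{\cB^k_\eta}$, $k\ne j$, which is \deq{eq:lem:sepBound} — no tree, no connectedness of $A^j_\eta$. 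Part \ref{lem:Aser2} then follows by induction exactly as you intend (the separating set is open, hence of positive measure, killing one coefficient at a time), but with the induction driven by reapplying this boundary-point argument to the remaining subfamily rather than by a nesting order. If you want to salvage your route, you would have to prove the separation property directly for partially overlapping families, which is essentially the paper's argument anyway.
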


\begin{proof}
\asserRef{lem:Aser1}
Since one set  $A^k_\eta$ may be included inside another, the index $j$ must be such that the boundary of $A^j_\eta$ is contained in the boundary of the union $A_\eta=\cup_k A^k_\eta$.
We show that this requirement is sufficient:
Fix some $x\in \p A_\eta$.
Then, for each neighborhood $W$ of $x$, there holds
\begin{equation}
	W\cap A_\eta\ne\emptyset ,
	\quad
	W\cap (\Om\setminus A_\eta)\ne\emptyset .
\end{equation}
Since
\begin{equation}
	W\cap A_\eta =\bigcup_{k=1}^K \Brb{W\cap A^k_\eta} ,
\end{equation}
and
\begin{equation}
	W\cap (\Om\setminus A_\eta)
		=\bigcap_{k=1}^K \Brb{W\cap (\Om\setminus A^k_\eta) } ,
\end{equation}
we have $x\in\p A^j_\eta$, for some $1\le j \le K$.
Since $\p A^j_\eta\sbt\p U^j_\eta$ and, by \deq{eq:USetsSep}, the sets $\ol{U^k_\eta}$ are mutually disjoint, $x\notin \p A^k_\eta$, for all $k\ne j$.
Similarly, we have $x\notin\p S_\eta$.
Hence, there exists an open neighborhood $W$ of $x$, $W\cap (\ol{A^k_\eta\cup U^k_\eta})=\emptyset$, for all $k\ne j$, and $W\cap\ol{S_\eta}=\emptyset$.
Moreover, since $x\in \p A^j_\eta$, the intersection $V=W\cap A^j_\eta$ is nonempty,
while $V\cap \ol{U^k_\eta}=\emptyset$, for all $k=1,\ldots,K$, and $V\cap \ol{S_\eta}=\emptyset$.
Thus, $V$ is nonempty, open and satisfies $V\sbt A^j_\eta\cap D_\eta=\cB^j_\eta$ and $V\cap \ol{\cB^k_\eta}\sbt V\cap \ol{A^k_\eta}=\emptyset$, for $k\ne j$, by \deq{eq:setA} and \deq{eq:USetsSep}.
This yields \deq{eq:lem:sepBound}.

\medskip
\asserRef{lem:Aser2}
Suppose
\begin{equation}\label{eq:linCombZero}
	\sum_{k=1}^K \psi^k \chi^k=0
	\qquad
	\text{a.e.\ in $D_\eta$}
\end{equation}
for some $\psi^k\in\R$, $k=1,\ldots,K$.
According to \ref{lem:Aser1}, there is an index $j$ for which \deq{eq:lem:sepBound} holds.
Note that the set on the left-hand side of \deq{eq:lem:sepBound} is open and therefore is of positive measure.
Without loss of generality, assume that $j=K$ satisfies \deq{eq:lem:sepBound}.
However, by \deq{eq:linCombZero} this can be, only if $\psi^K=0$.
The above argument may be repeated by induction.
Since there is a finite number of functions $\chi^k$, the procedure stops only when a single function remains, say $\chi^1$.
Then \deq{eq:linCombZero} reduces to $\psi^1\chi^1=0$.
Again, since $A^1_\eta\cap D_\eta$ is open and nonempty, this implies $\psi^1=0$ and the restrictions of $\chi^k$ to $D_\eta$ are thus linearly independent.

\medskip
\asserRef{lem:Aser3}
Let $\what\psi=(\psi^k)\in\R^K$, $\del\in[0,\eta]$, $\psi=\sum_k\psi^k \chi^k$, and $\psi_\del=\sum_k\psi^k \chi^k_\del$, where $\chi^k_0=\chi^k$.
As each $\chi^k_\del$ coincides with $\chi^k$ in $D_\del$, we have
\begin{equation}
	\|\psi_\del\|^2 \ge \int_{D_\del} \Big|\sum_{k=1}^K \psi^k\chi^k_\del \Big|^2
		=\int_{D_\del} \Big|\sum_{k=1}^K \psi^k\chi^k \Big|^2
		=\|\psi\|^2_{L^2(D_\del)} .
\end{equation}
Since $D_\eta\sbt D_\del$, $\del\le\eta$, we obtain the lower bound
\begin{equation}
	\|\psi_\del\|^2 \ge \|\psi\|^2_{L^2(D_\del)} \ge \|\psi\|^2_{L^2(D_\eta)} .
\end{equation}
Expanding the term on the right-hand side yields
\begin{equation}
	\|\psi\|^2_{L^2(D_\eta)} = \sum_{k,j=1}^K \psi^k\psi^j \int_{D_\eta} \chi^k\chi^j
		=\sum_{k,j=1}^K M_{kj} \psi^k\psi^j
\end{equation}
where
\begin{equation}
	M_{kj}=\int_{D_\eta} \chi^k\chi^j =\cL(A^k\cap A^j\cap D_\eta)
	\qquad
	k,j=1,\ldots,K .
\end{equation}
Hence $\|\psi\|^2_{L^2(D_\eta)}$ is a quadratic form in the vector $\what\psi$, represented by the symmetric $K\times K$ matrix $M=(M_{kj})$, and it is positive for any $\what\psi\ne0$ by assertion \ref{lem:Aser2} of this lemma.
Therefore, $M$ is symmetric positive definite, which implies \deq{eq:psi_delCoer} and hence the conclusion.
\end{proof}

\begin{proof}[Proof of Lemma \ref{lem:eigVal_est}]
From the spectral theory for symmetric elliptic operators~\cite[\S 6.5 -- 6.6]{E2010}, it follows that for each $n\ge 1$,
\begin{equation}\label{eq:rayleigh_quotient}
	\lam_n=\min_{0\ne v\in \Phi_{n-1}^\bot} \frac{B[v,v]}{\|v\|^2} ,
\end{equation}
where $\Phi_{n-1}=\Span\{\vphi_j\}_{j=1}^{n-1}$ for $n\ge2$, or $\Phi_0=\{0\}$ for $n=1$, and $W^\bot$ denotes the orthogonal complement of $W$ in $\cV^\del_0$ with respect to the $L^2(\Om)$ inner product.
For any
\begin{equation}\label{eq:psi_del_def}
	\psi_\del=\sum_{k=1}^K \psi^k \chi^k_\del ,
	\qquad
	\psi^k\in\R ,
\end{equation}
there holds
\begin{equation}
	\angb{\vphi_j, \psi_\del} =\sum_{k=1}^K \psi^k\angb{\vphi_j, \chi^k_\del} .
\end{equation}
Now, let $1\le n\le K$ and $\what\psi=(\psi^k)\in\R^K$, with $|\what\psi|=1$, such that $\psi_\del$ given by \deq{eq:psi_del_def} satisfies
\begin{equation}\label{eq:psiInPhiOC}
	\angb{\vphi_j, \psi_\del}=0 ,
	\qquad
	j=1,\ldots,n-1 ,
\end{equation}
for $n\ge 2$, or no condition at all for $n=1$.
One can always find such a vector of coefficients $\what\psi$, since $n\le K$ and hence the homogeneous linear system \deq{eq:psiInPhiOC} has more unknowns than equations.
By Lemma \ref{lem:CharFuncLinInd} there exists a constant $C_1>0$, independent of $|\what\psi|=1$, $\del\in(0,\eta]$ or $\veps$, such that $\|\psi_\del\|^2\ge C_1$.
Therefore, $\psi_\del\in\Phi_{n-1}^\bot$, is not identically zero, and we obtain from \deq{eq:rayleigh_quotient}
\begin{equation}\label{eq:proof_lam_n_est}
	\lam_n\le \frac{B[\psi_\del,\psi_\del]}{\|\psi_\del\|^2} \le \frac{B[\psi_\del,\psi_\del]}{C_1} .
\end{equation}

Thus, it is left to estimate $B[\psi_\del,\psi_\del]$.
By \deq{eq:approx_reg}, we have
\begin{equation}
	B[\psi_\del,\psi_\del]
		=\int_{\Om} \mu_\veps[u_\del] |\nabla \psi_\del|^2 \, d x
		=\int_{U_\del} \mu_\veps[u_\del] |\nabla \psi_\del|^2 \, d x .
\end{equation}
Furthermore, by employing \deq{eq:mu_hatmu} and the equality $\nabla u_\del=\nabla \wt{u}_\del$ which holds a.e.\ in $U_\del$ because $S_\del\cap U_\del=\emptyset$, we get
\begin{equation}\label{eq:B_psi}
	B[\psi_\del,\psi_\del]
		=\int_{U_\del} \mu_\veps[\wt u_\del] |\nabla \psi_\del|^2 \, d x .
\end{equation}

Next, we shall show that
\begin{equation}\label{eq:est_psi_mu}
	|\nabla \psi_\del\frb{x}| < \frac{|\nabla \wt u_\del\frb{x}|}{m} ,
	\qquad
	\text{a.e.\ $x\in U_\del$,}
\end{equation}
with $m=\min_j |u^j|>0$.
Since the sets $U^1_\del,\ldots,U^K_\del$ are mutually disjoint, each $x\in U_\del$ lies in precisely one $U^k_\del$, with $1\le k\le K$, and hence at a.e.\ $x\in U_\del$, at most one of the gradients, $\nabla \chi^k_\del(x)$, is nonzero.
Therefore, a.e.\ in $U_\del$ we have
\begin{equation}
\begin{aligned}
	|\nabla \psi_\del|
        &=\Big|\sum_{k=1}^K \psi^k\nabla \chi^k_\del \Big|
			= \sum_{k=1}^K |\psi^k||\nabla \chi^k_\del | \\
		&\le \frac{1}{m}\, \sum_{k=1}^K |u^k| |\nabla \chi^k_\del |
			= \frac{1}{m}\, \abs{\sum_{k=1}^K u^k \nabla \chi^k_\del }
\end{aligned}
\end{equation}
which completes the proof of \deq{eq:est_psi_mu}.

Finally, from \deq{eq:B_psi}, \deq{eq:est_psi_mu} and \deq{eq:REstMu} with $w=u_\del$, we infer that
\begin{equation}
	B[\psi_\del,\psi_\del]
		\le \frac{1}{m} \int_{U_\del} |\nabla \psi_\del|\, d x
		=\frac{1}{m}\, \|\nabla\psi_\del\|_{L^1(\Om)} .
\end{equation}
By using the definition of $\tau_k$ in \deq{eq:ThmEst} and applying the Cauchy-Schwarz inequality in $\R^K$ we obtain
\begin{equation}
	B[\psi_\del,\psi_\del]\le \frac{1}{\min_j |u^j|}\, \sum_{k=1}^K |\psi^k| \tau_k
		\le\frac{|\tau|}{\min_j |u^j|} ,
\end{equation}
which, together with \deq{eq:proof_lam_n_est}, yields the conclusion.
\end{proof}

\subsection{Numerical example}\label{sbSec:Numerical example}

\begin{figure}[t]
    \centering
    \subcaptionbox{true medium $u$ (or $u_{\del}$)\label{fig:example-1-approximation-exact-profile}}{%
	\includegraphics[width=0.35\linewidth]{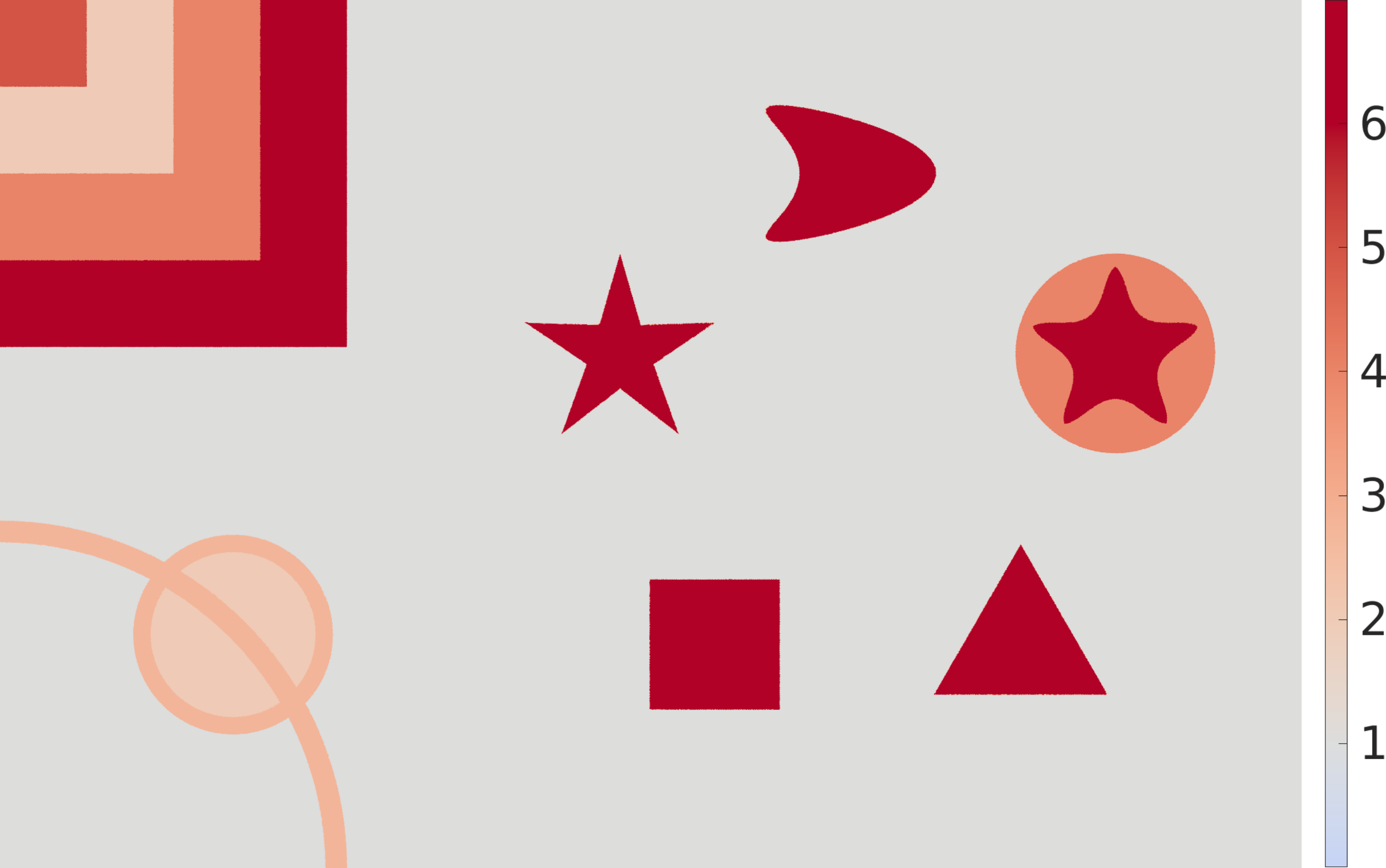}}
    \hspace{0.5cm}
    \subcaptionbox{AS approximation\label{fig:example-1-approximation-AE-approximation}}{%
        \includegraphics[width=0.35\linewidth]{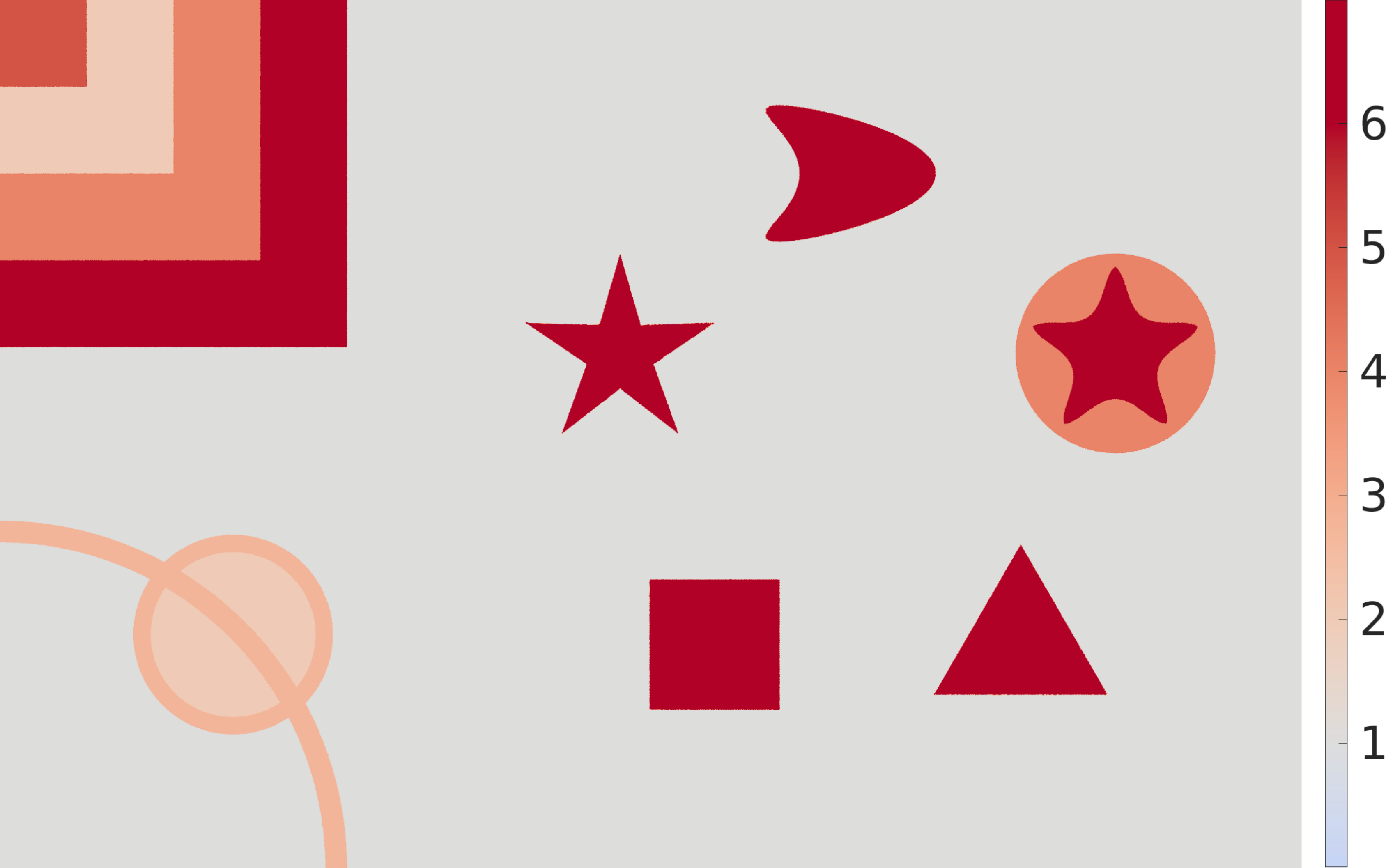}}

    \caption{Two-dimensional piecewise constant medium: (a) true medium $u$ (or $u_{\del}$),
        (b) AS approximation $\varphi_0 + \Pi_K[u_\delta-\vphi_0]$.
        Note that $u$ and its FE-interpolant $u_{\del}$ cannot be distinguished here.
    }
    \label{fig:example-1-approximation}
\end{figure}
\begin{figure}[t]
    \centering
    \def\expASlambdaA{5.9}
    \def\expASlambdaB{6.0}
    \def\expASlambdaC{7.1}
    \def\expASlambdaD{7.8}
    \def\expASlambdaE{13.7}
    \def\expASlambdaF{43.2}
    \def\expASlambdaG{55.4}
    \def\expASlambdaH{62.3}

    \subcaptionbox{$\vphi_0$\label{fig:example-1-background}}{%
	\includegraphics[width=0.3\linewidth]{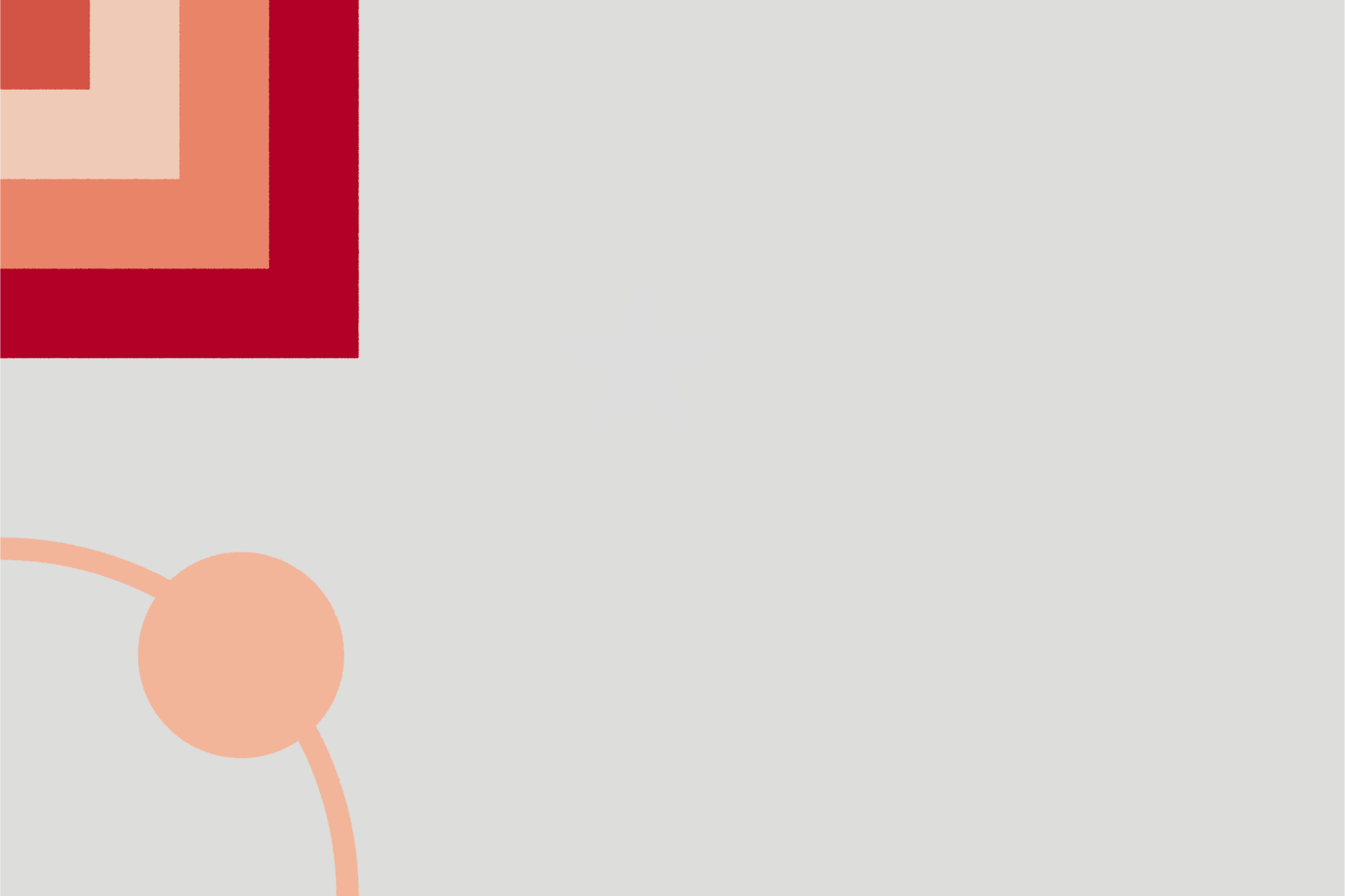}}
    \hspace{0.25cm}
    \subcaptionbox{$\vphi_1$ with  $\lambda_1 \approx \expASlambdaA$\label{fig:example-1-eigenfunction-1}}{%
        \includegraphics[width=0.3\linewidth]{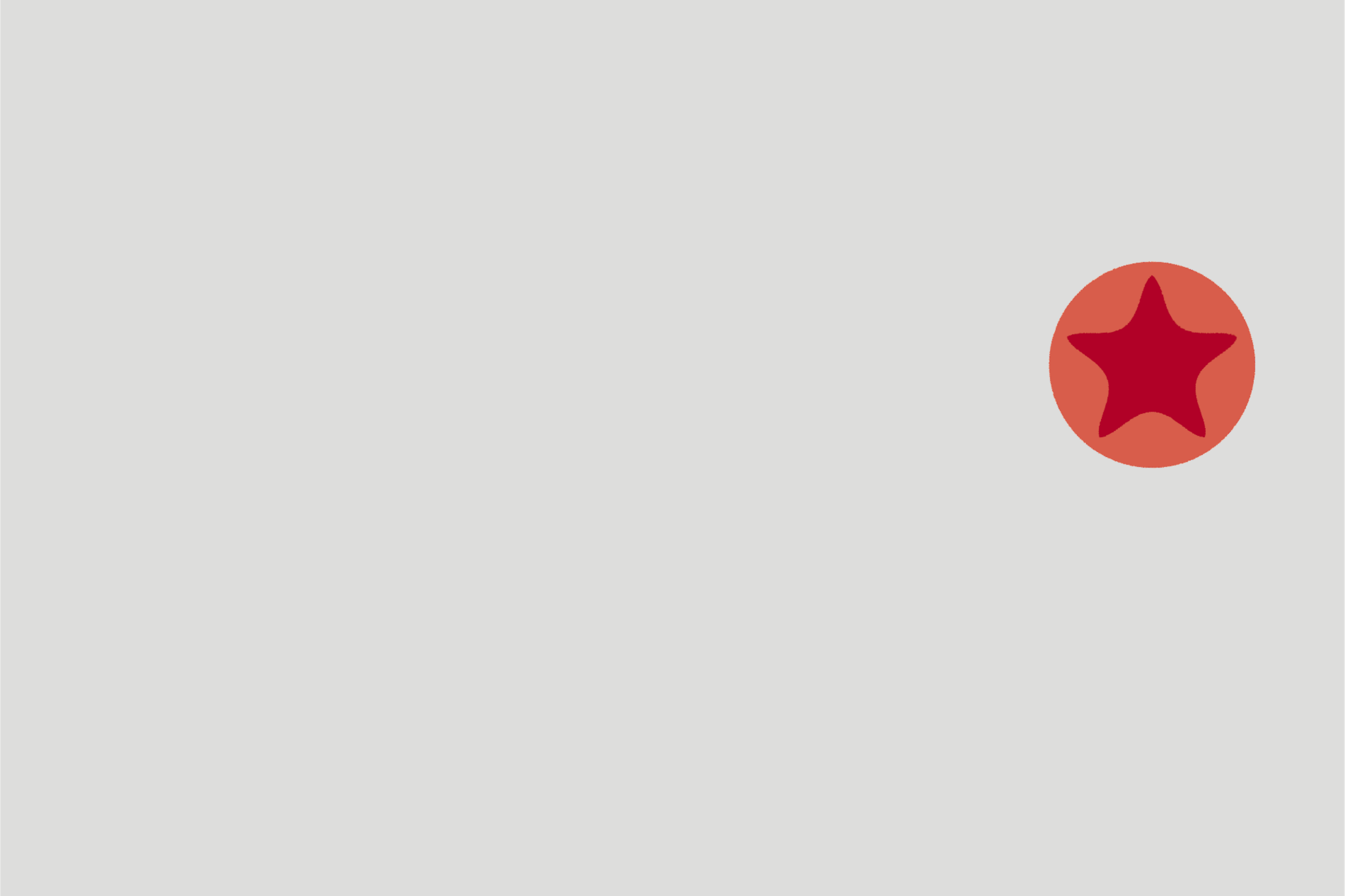}}
    \hspace{0.25cm}
    \subcaptionbox{$\vphi_2$ with  $\lambda_2 \approx \expASlambdaB$\label{fig:example-1-eigenfunction-2}}{%
        \includegraphics[width=0.3\linewidth]{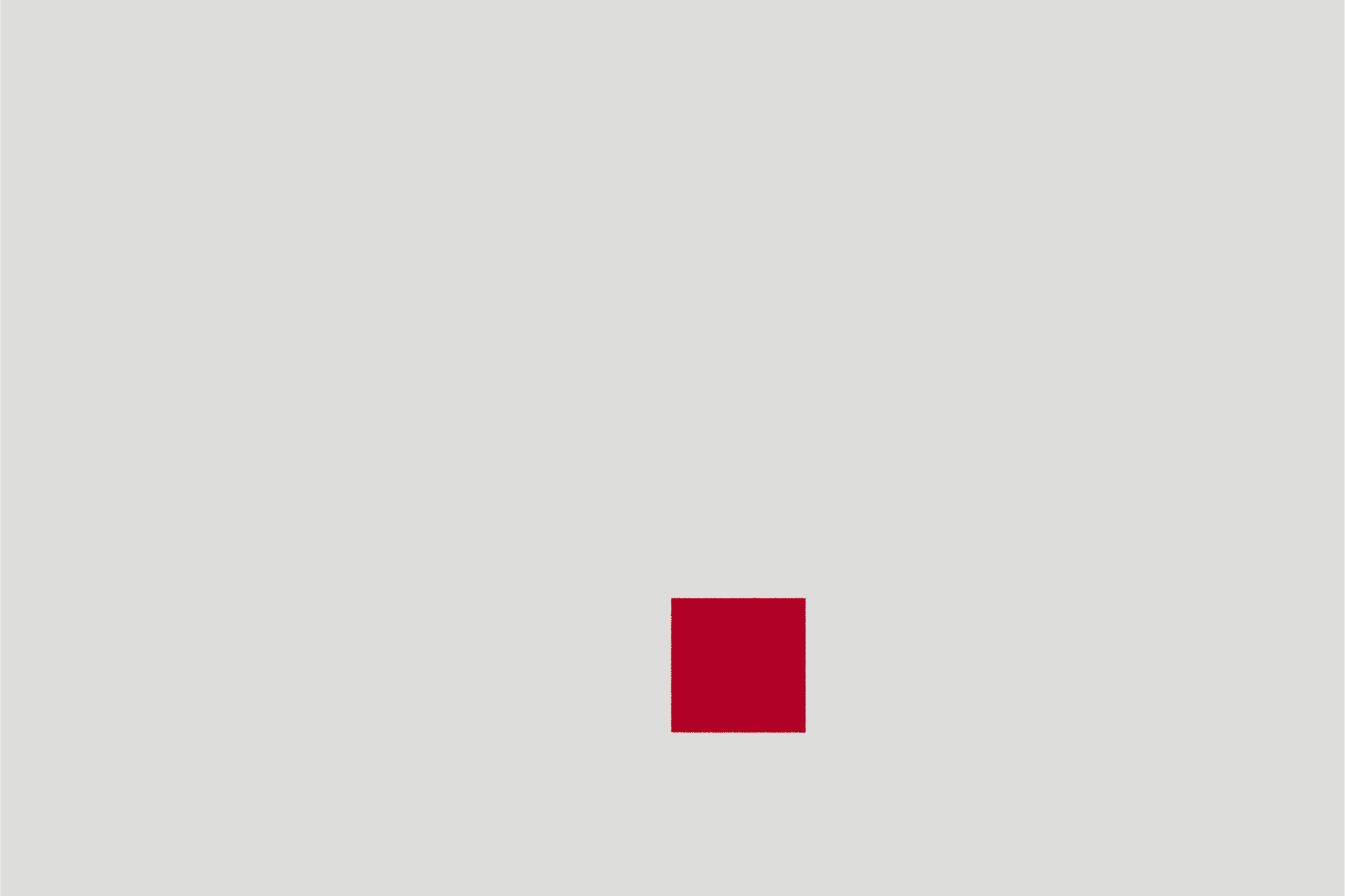}}
    \\[2ex]

    \subcaptionbox{$\vphi_3$ with  $\lambda_3 \approx \expASlambdaC$\label{fig:example-1-eigenfunction-3}}{%
        \includegraphics[width=0.3\linewidth]{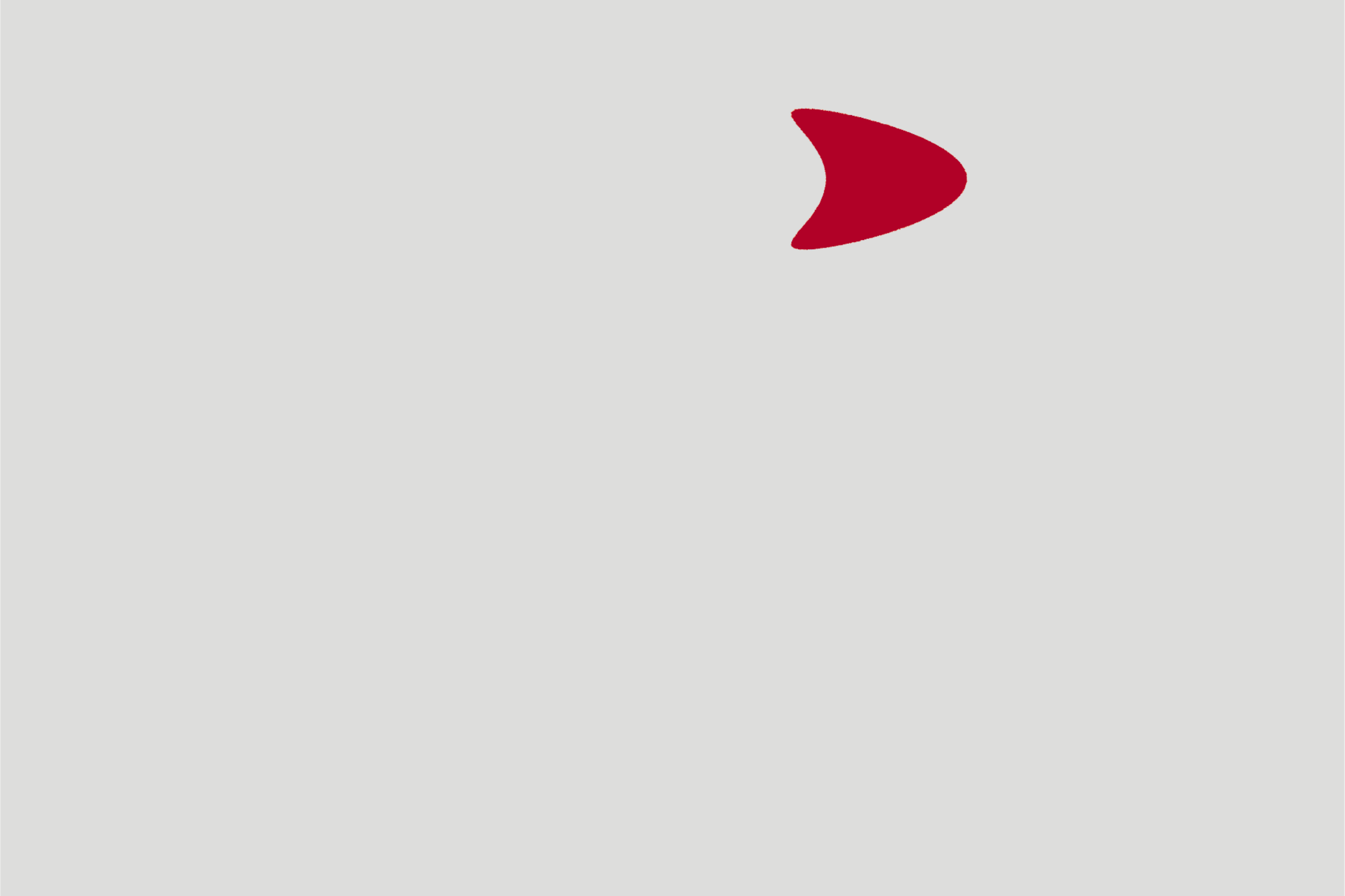}}
    \hspace{0.25cm}
    \subcaptionbox{$\vphi_4$ with  $\lambda_4 \approx \expASlambdaD$\label{fig:example-1-eigenfunction-4}}{%
        \includegraphics[width=0.3\linewidth]{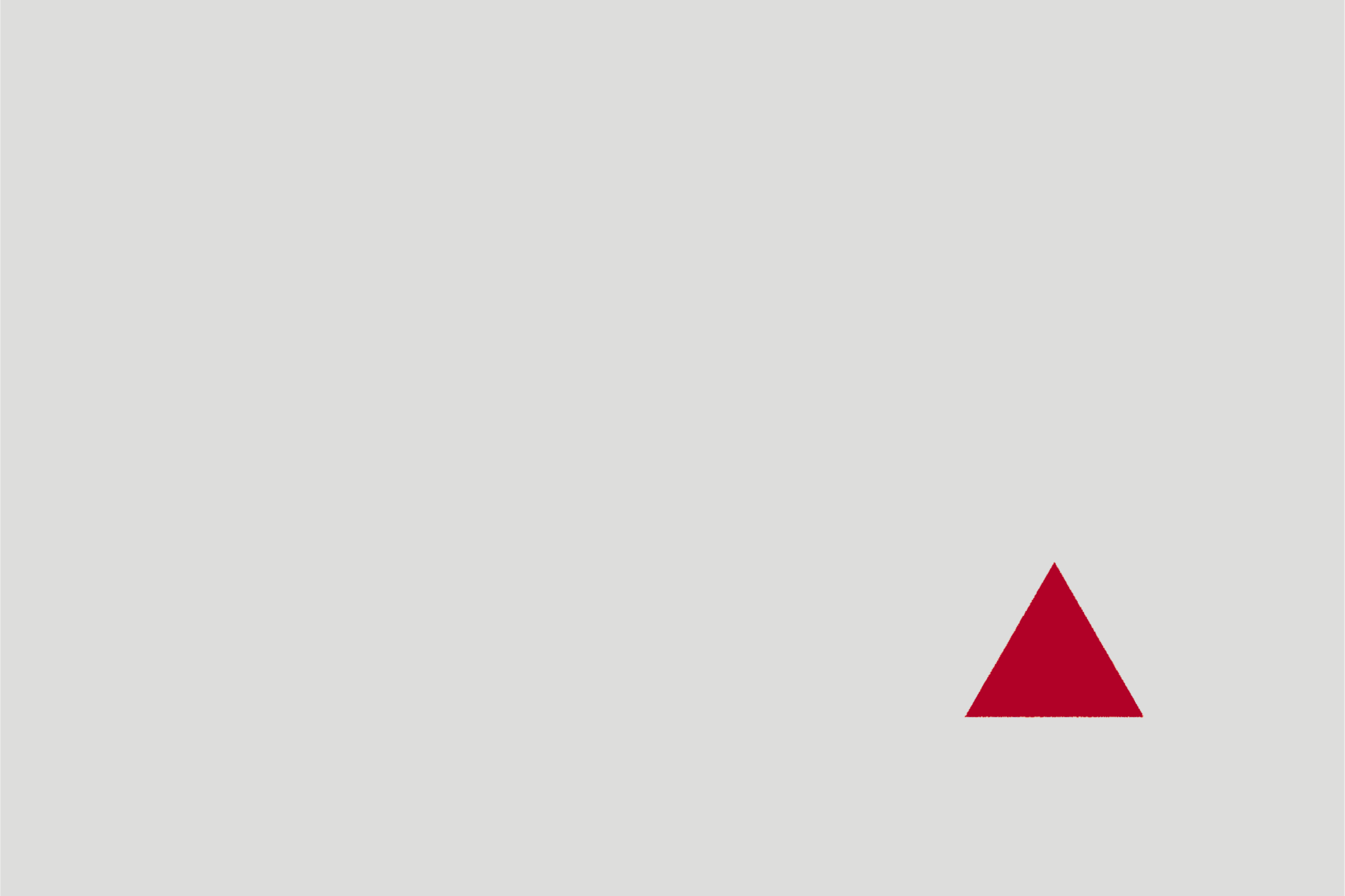}}
    \hspace{0.25cm}
    \subcaptionbox{$\vphi_5$ with  $\lambda_5 \approx \expASlambdaE$\label{fig:example-1-eigenfunction-5}}{%
        \includegraphics[width=0.3\linewidth]{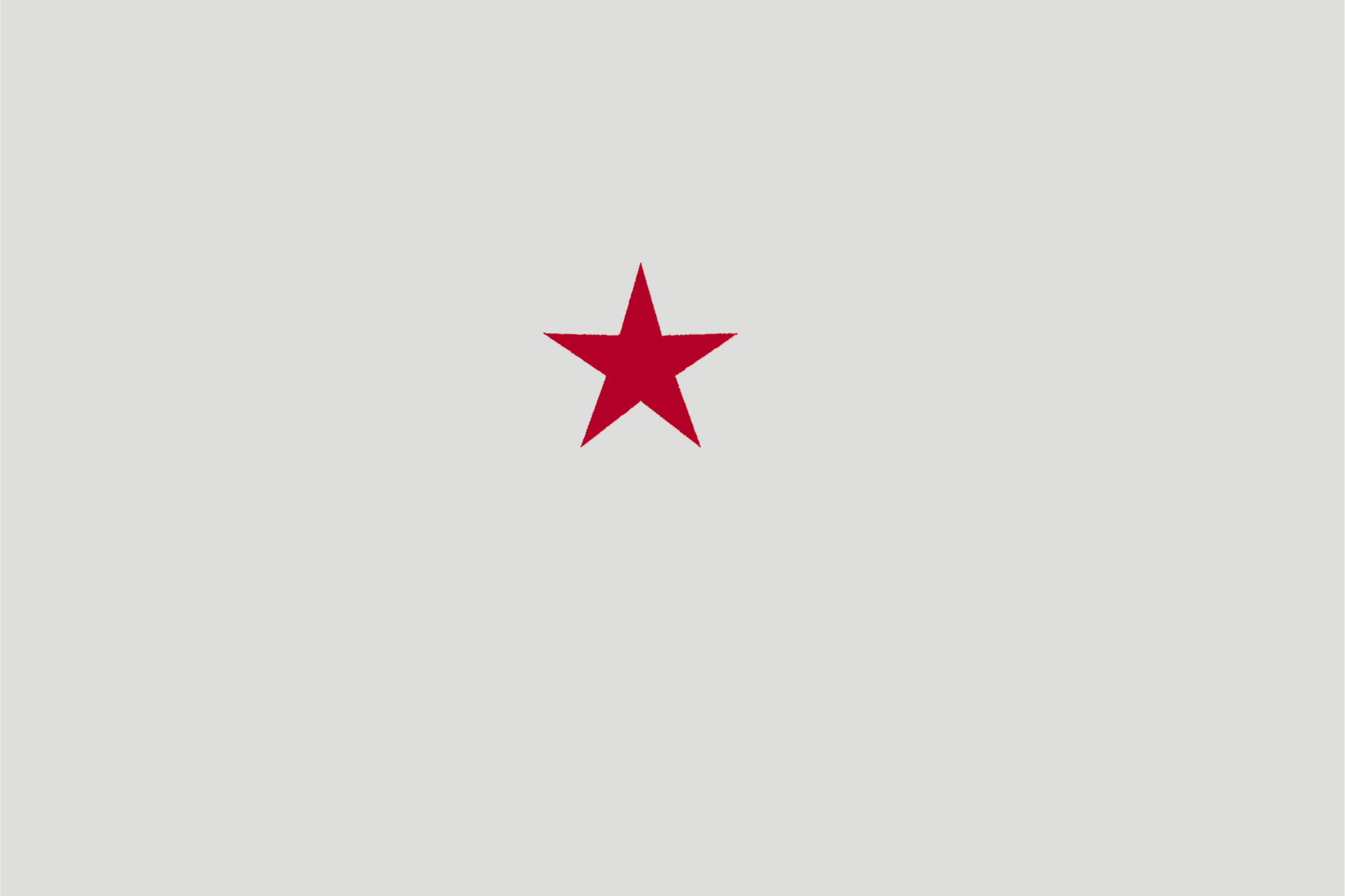}}
    \\[2ex]

    \subcaptionbox{$\vphi_6$ with  $\lambda_6 \approx \expASlambdaF$\label{fig:example-1-eigenfunction-6}}{%
        \includegraphics[width=0.3\linewidth]{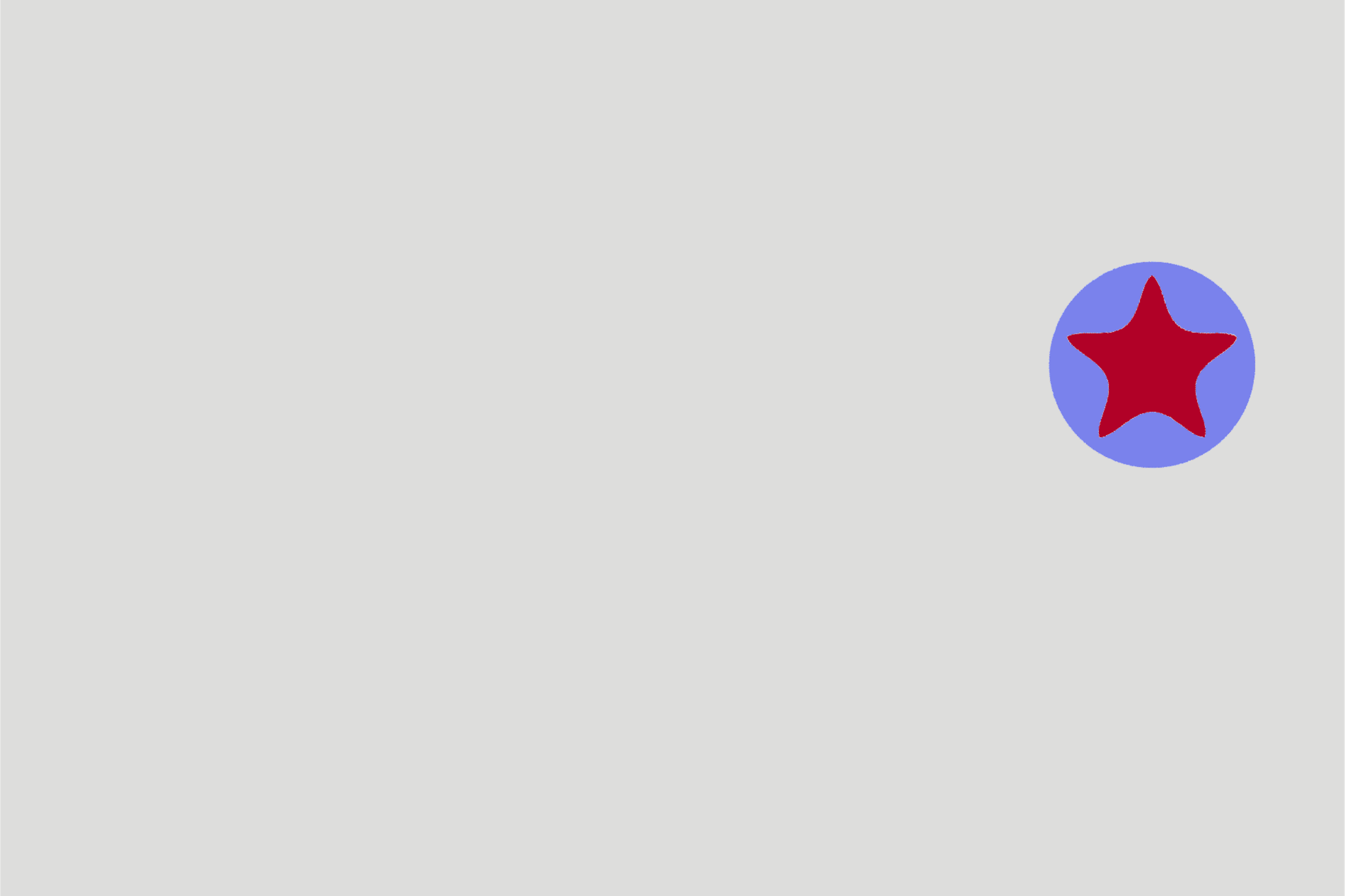}}
    \hspace{0.25cm}
    \subcaptionbox{$\vphi_7$ with  $\lambda_7 \approx \expASlambdaG$\label{fig:example-1-eigenfunction-7}}{%
        \includegraphics[width=0.3\linewidth]{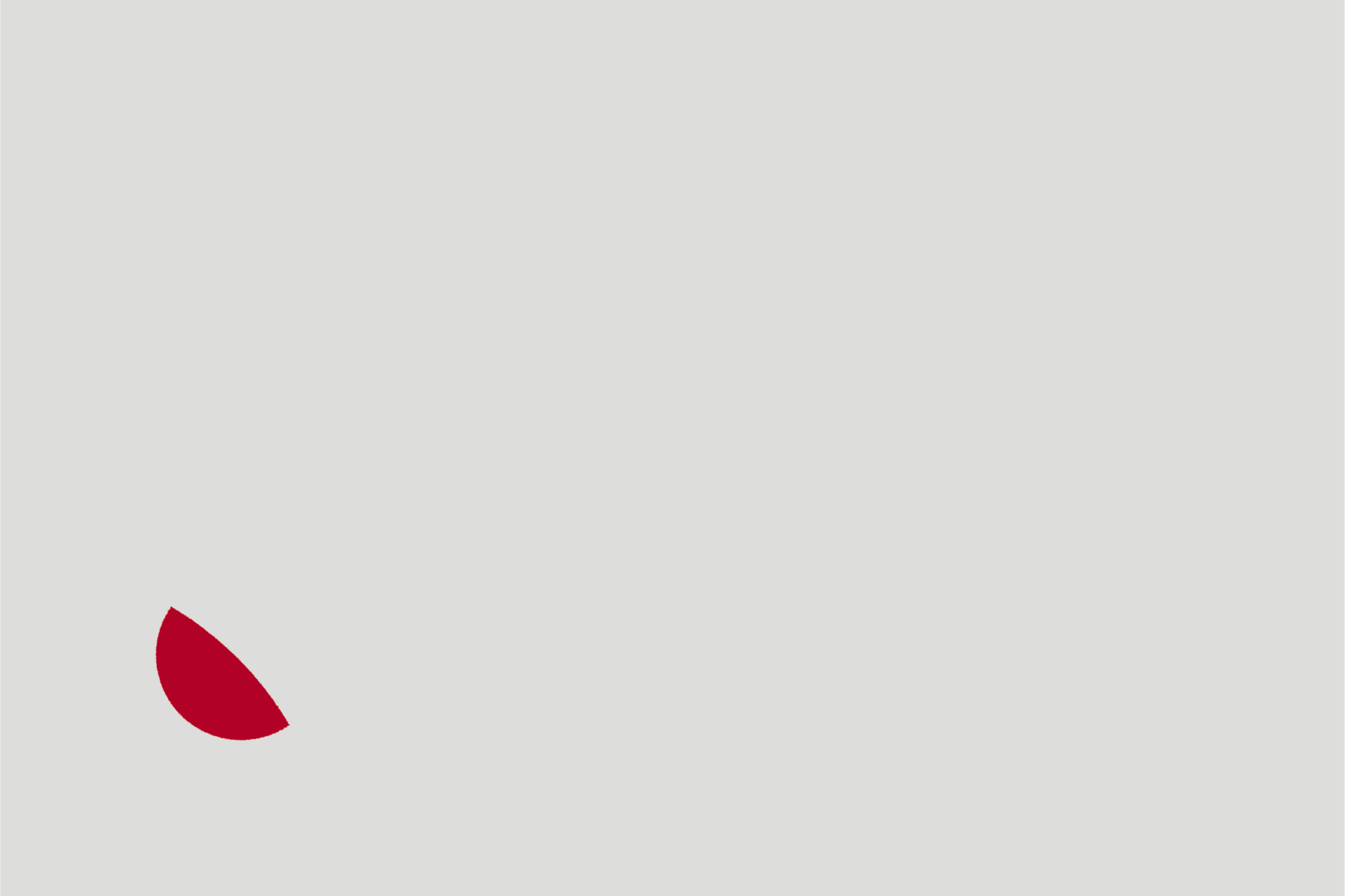}}
    \hspace{0.25cm}
    \subcaptionbox{$\vphi_8$ with  $\lambda_8 \approx \expASlambdaH$\label{fig:example-1-eigenfunction-8}}{%
        \includegraphics[width=0.3\linewidth]{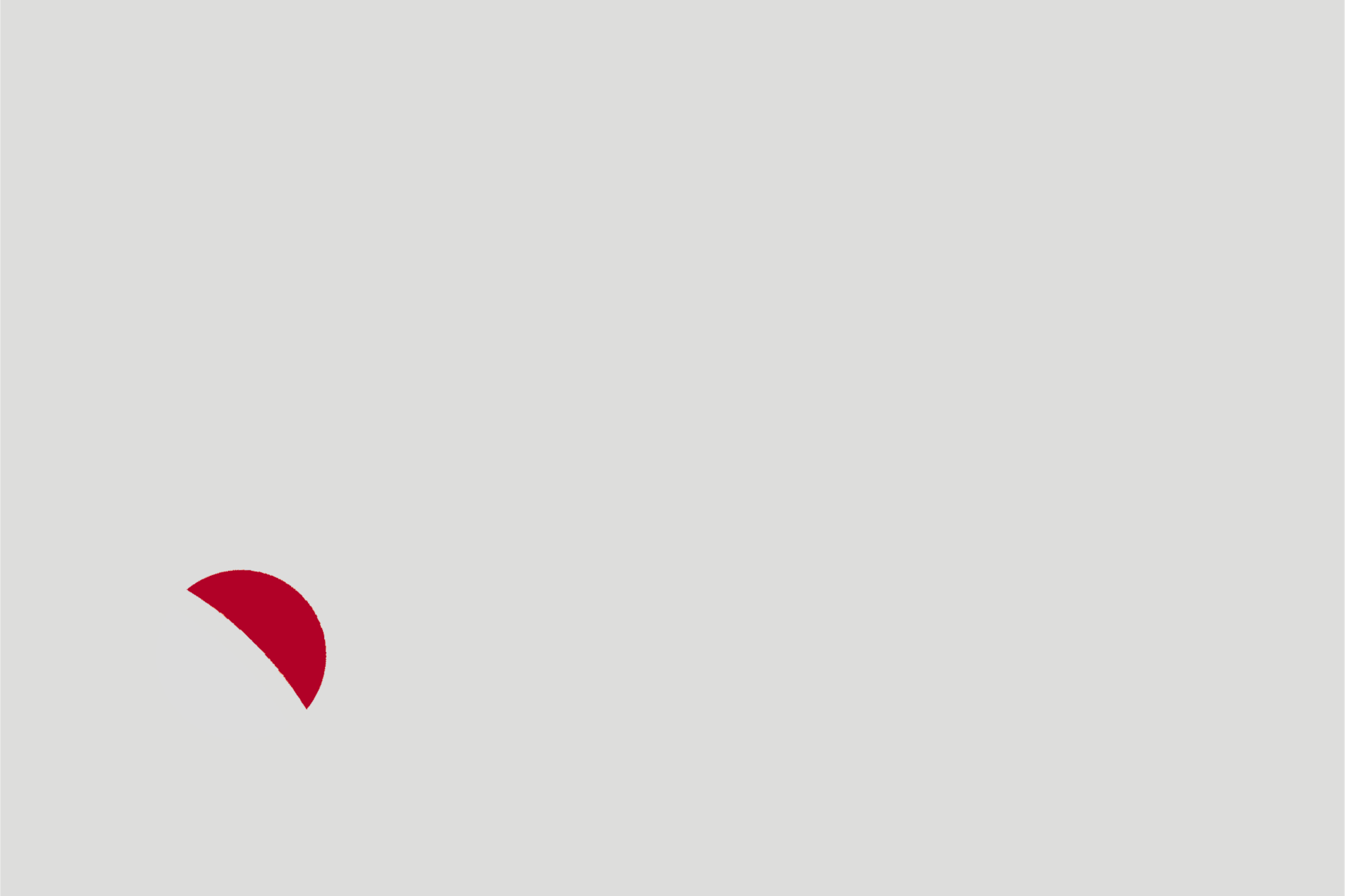}}

    \caption{
        Two-dimensional piecewise constant medium:
        (a) background $\vphi_{0}$ in (\ref{eq:phi0BVP_analysis}) with $w=u_{\del}$ and $\varepsilon=10^{-8}$;
        (b)-(i) eigenfunctions $\vphi_{1},\ldots,\vphi_{8}$ of \deq{eq:eigenValProb_analysis} corresponding to
        the first eight eigenvalues $\lambda_1,\ldots,\lambda_8$.
    }
    \label{fig:example-1-eigenfunctions}
\end{figure}

\begin{figure}
    \centering
    \includegraphics[width=12cm]{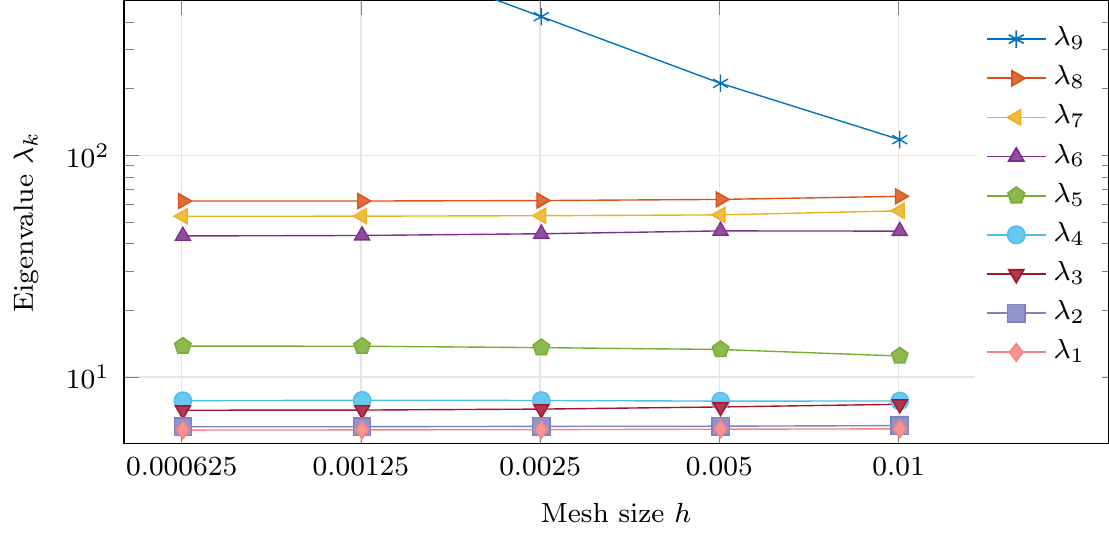}
    \caption{Two-dimensional piecewise constant medium:
    eigenvalues $\lambda_1,\ldots, \lambda_9$ of $L_\veps[u_{\del}]$ for mesh sizes $\delta = h=0.01/2^{m}$, $m=0,\ldots,4$, and fixed $\veps=10^{-8}$.
    }
     \label{fig:example-1-eigenvalues}
\end{figure}

%
To illustrate the remarkable approximation properties of
the adaptive spectral (AS) representation in Theorem \ref{thm:main} and Corollary \ref{cor:main},
we now consider in $\Omega = (0,1.5) \times (0,1)$ the piecewise constant medium $\uTotal = \uBG  + \uMedium$,
shown in Fig.\ \ref{fig:example-1-approximation-exact-profile}.
It consists of a background, $ \uBG $, and an interior part, $\uMedium$,
which vanishes on the boundary $\partial \Omega$.
Both $\uBG$ and $\uMedium$ are linear combinations,
\[
    \uBG(x)        =  \sum_{m=1}^{M} \hat u^{0,m} \chi^{0,m}(x),
    \qquad
    \uMedium(x)    =   \sum_{k=1}^{K} \hat u^{k} \chi^k(x),
    \qquad \hat u^{0,m}, \hat u^{k} \in \R
\]
of characteristic functions $\chi^{0,m}$ and $\chi^{k}$ associated with given subsets $A^{0,m}$ and $A^{k}$ of $\Omega$, respectively, with $M=7$ and $K=8$.
While the subsets $A^{0,m}$ composing the background reach the boundary  $\partial \Omega$ in the sense of $\mathcal{H}^{d-1}(\p A^{0,m}\cap \partial \Omega)>0$, where $\mathcal{H}^{d-1}$ denotes the $(d-1)$-dimensional Hausdorff measure, the inclusions $A^k$ lie strictly inside $\Omega$.

First, we approximate $\uTotal$ by its $H^1$-conforming $\mathcal{P}^1$-FE interpolant, $u_{\del} \in W^{1,\infty}(\Omega)$,
on a regular triangular mesh, locally refined along discontinuities, with
$242'790$ elements varying in size between $4.4\times 10^{-4}$ and $3.8\times 10^{-1}$.
Due to the fine adaptively constructed mesh, the FE- approximation errors are essentially negligible.
In fact, since $\uTotal$ and $u_{\del}$ differ by a relative $L^2$-error as little as $2.5\%$, they are hardly distinguishable in Fig.\ \ref{fig:example-1-approximation-exact-profile}.

Now, we compute the approximation $\vphi_0$ of the background by solving (\ref{eq:phi0BVP_analysis}) with
$L_\veps[u_{\del}]$ and $\mu_{\veps}[u_{\del}]$ as in (\ref{eq:linear_op}), (\ref{eq:mutwo}) and $\varepsilon=10^{-8}$.
As shown in Fig.\ \ref{fig:example-1-background}, $\vphi_0$ appears essentially piecewise constant
throughout $\Omega$ while correctly representing the various components of the background $\uBG$.
Similarly, the first eight eigenfunctions $\vphi_1,\ldots,\vphi_8$ of $L_\veps[u_{\del}]$,
defined by (\ref{eq:eigenValProb_analysis}), correctly identify in
Figs.\ \ref{fig:example-1-eigenfunction-1} -- \ref{fig:example-1-eigenfunction-8}
all remaining interior sets (or inclusions). For all isolated inclusions, each eigenfunction $\vphi_k$
accurately matches a single characteristic function $\chi^k$.
However, for the two overlapping star- and disk-shaped sets, ${A^1}$ and ${A^6}$, each of the remaining two eigenfunctions $\vphi_1$ and $\vphi_6$ essentially correspond to a linear combination of $\chi^1$ and $\chi^6$; hence, $\vphi_1$ and $\vphi_6$ span the same two-dimensional subspace as $\chi^1$ and $\chi^6$.
Note that neither $\vphi_0$ nor
$\vphi_1,\ldots,\vphi_8$ are truly piecewise constant but, in fact, lie in $H^1(\Om)$.
Although their gradients nearly vanish in $D_\delta$ (i.e., outside the $\delta$ neighborhoods of the interfaces), due to the upper bounds (\ref{eq:est_BG}) and (\ref{eq:est_eigs}) of Theorem \ref{thm:main}, the eigenfunctions do vary (slightly) throughout $\Omega$.

Next, we subtract the background $\vphi_0$ from $u_{\del}$ and compute
the $L^2$-projection $\Pi_K[u_\delta-\vphi_0]$ into $\Phi_K = \Span\{\varphi_i\}_{i = 1}^K$,
defined in (\ref{eq:orthogonal-projector-PK}). Since $\vphi_0$ matches well $\uBG$ and
$\vphi_1,\ldots,\vphi_8$ essentially span the same eight-dimensional subspace as
$\chi_1, \ldots, \chi_8$, $\Pi_K[u_\delta-\vphi_0]$ approximates $\uMedium$ remarkably well.
Hence, the combined AS representation $\varphi_0 + \Pi_K[u_\delta-\vphi_0]$, shown in Figure
\ref{fig:example-1-approximation-AE-approximation}, also approximates remarkably
well the entire medium $\uTotal$, or $u_{\del}$, with a relative $L^2$-error as little as $2.5\%$, or $0.05\%$, respectively.
In fact, $\uTotal$ is hardly
distinguishable from the AS approximation $\varphi_0 + \Pi_K[u_\delta-\vphi_0]$
 in  Figure~\ref{fig:example-1-approximation}.

Finally, we monitor in Fig.\ \ref{fig:example-1-eigenvalues} the behavior of the first nine eigenvalues of
$L_\veps[u_{\del}]$ for a sequence of increasingly finer quasi-uniform FE meshes and fixed $\veps=10^{-8}$.
While the first eight eigenvalues remain bounded, thereby validating the upper bound  (\ref{eq:est_eigs}),
the ninth eigenvalue apparently diverges as $h$ tends to zero.

%
%
%
%
%
%
%
%
%
%
%
%
%
%
%
%
\section{Numerical experiments}
\label{sec:NumRes}

Here we present two numerical examples which illustrate the accuracy and the usefulness of the ASI method
for the solution of inverse medium problems.
In the first, the unknown medium is composed of five simple geometric inclusions, and in the second, the medium is a two-dimensional model of a salt dome from geophysics.

We apply the ASI Algorithm from Section 2.2 with frequency stepping for the minimization of the misfit $\cJ$, given by (\ref{eq:cost_functional}), where the forward solution operator of the boundary value problem is replaced by a $\mathcal{P}^3$-FE Galerkin approximation.
For  $w$ a continuous, piecewise linear FE function, the operator $L_\veps[w]$, needed in Step 5 of the ASI Algorithm, is given by \deq{eq:linear_op}, where, $\mu_\veps = \mu_\veps[w]$ is given by \deq{eq:mutwo} and $\veps=10^{-8}$.
To find a minimizer $u_h^{(m)}$ of $\cJ$ in $\vphi_0^{(m)}+\Psi^{(m)}$ in Step \ref{algo:STEP-1} of the ASI Algorithm at the $m$-th ASI iteration, we use the BFGS quasi-Newton method \cite{NW2006}.
We stop the iterations of the BFGS method once the relative norm of the gradient of the function
\[
	\beta \longmapsto \cJ\Big[\vphi_0^{(m)} +\sum_{j=1}^{\PsiDim_m} \beta_j\vphi_j^{(m)} \Big]
\]
is smaller than $10^{-6}$.
The criterion for incrementing the  frequency is given by (\ref{eq:frequency-stepping-criterion}) with a tolerance $\varepsilon_{\nu}=0.005$.
In Step \ref{algo:STEP-4} of the ASI Algorithm, we use (\ref{eq:truncation-criterion}) with $\varepsilon_\Psi=\varepsilon_{\nu}=0.005$.

\subsection{Five simple geometric inclusions}

\begin{figure}[t]
\centering
    \begin{minipage}[t]{0.3\linewidth}
    \subcaptionbox{The medium $u$\label{fig:Exp1-ooo-adaptive-eigenspace-medium}}{%
                \includegraphics[width=\linewidth]{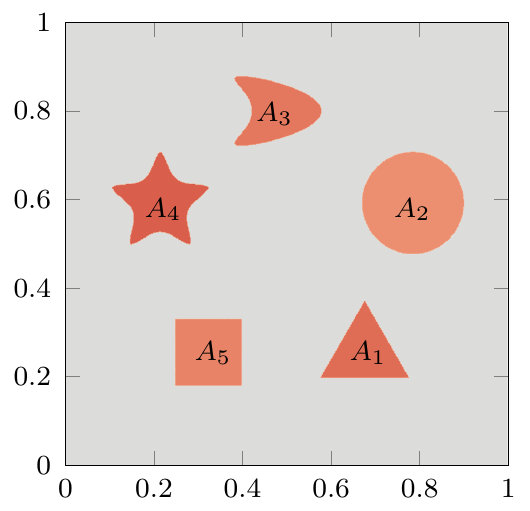}}
    \end{minipage}%
    \hfill
    \begin{minipage}[t]{0.3\linewidth}
    \subcaptionbox{Locations of the smoothed Gaussian point sources\label{fig:Exp1-ooo-adaptive-eigenspace-source}}{%
                \includegraphics[width=\linewidth]{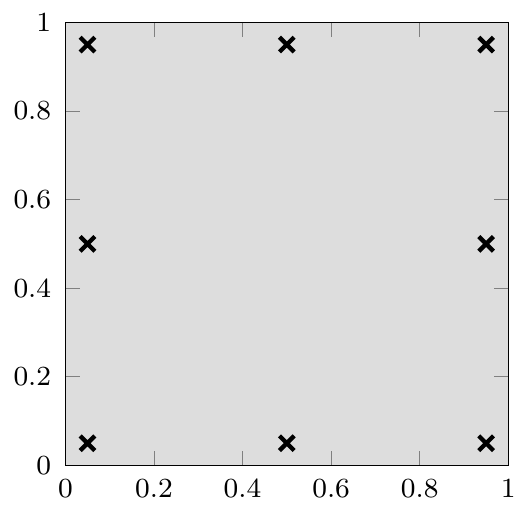}}
    \end{minipage}%
    \hfill
    \begin{minipage}[t]{0.3\linewidth}
    \subcaptionbox{The reconstruction $u_{h}^\textsuperscript{\smash{(30)}}$ at the 30th ASI iteration (rel.\ $L^2$ error of $4.4\%$)
        \label{fig:Exp1-ooo-adaptive-eigenspace-reconstruction}}{%
                 \includegraphics[width=\linewidth]{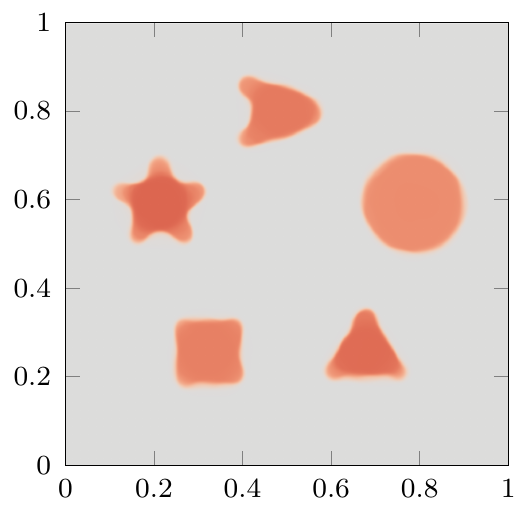}}
    \end{minipage}
    \caption{Five simple geometric inclusions:
    ASI reconstruction $u_h$ of the unknown medium $u$ from boundary observations with 20\% noise}
    \label{fig:Exp1-ooo-adaptive-eigenspace_1}
\end{figure}

\begin{figure}[t]
    \centering    
    \subcaptionbox{misfit $\cJ$ vs.\ ASI iteration \label{fig:Exp1-ooo-misfit}}{%
            \includegraphics[width=0.47\linewidth]{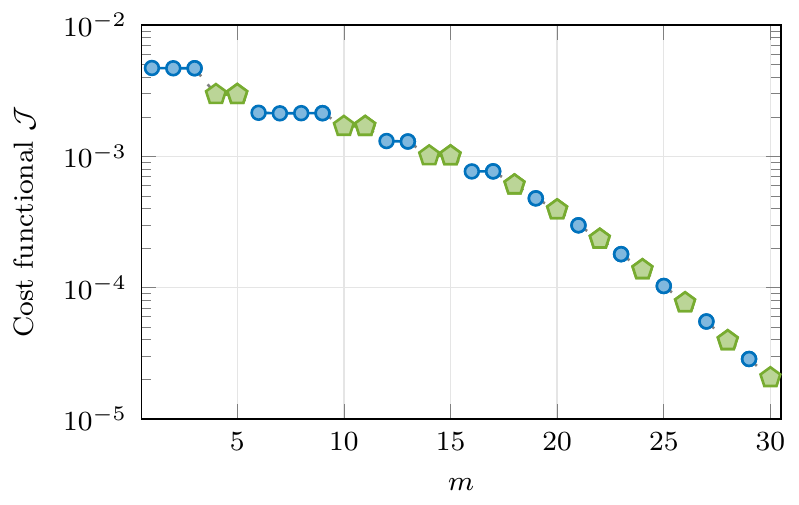}
    }
\hfill
    \subcaptionbox{relative $L^2$ error vs.\ ASI iteration \label{fig:Exp1-ooo-L2error}}{%
            \includegraphics[width=0.47\linewidth]{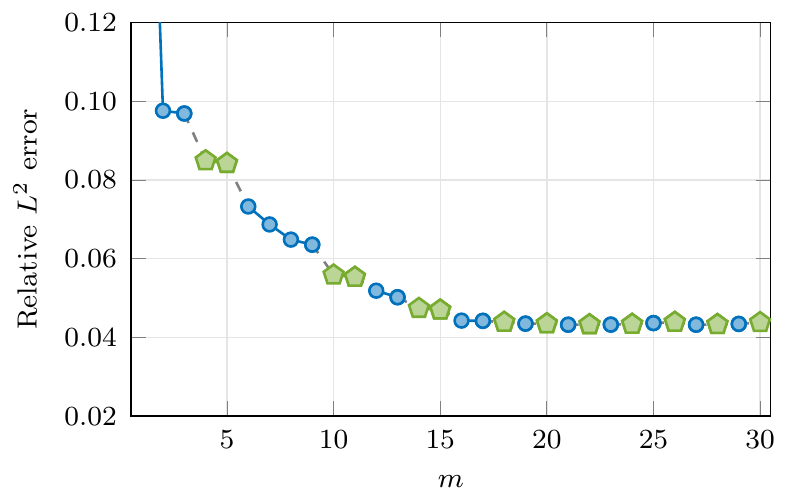}
    }
    \\[2ex]
    \subcaptionbox{frequency vs.\ ASI iteration \label{fig:Exp1-ooo-frequency}}{%
            \includegraphics[width=0.47\linewidth]{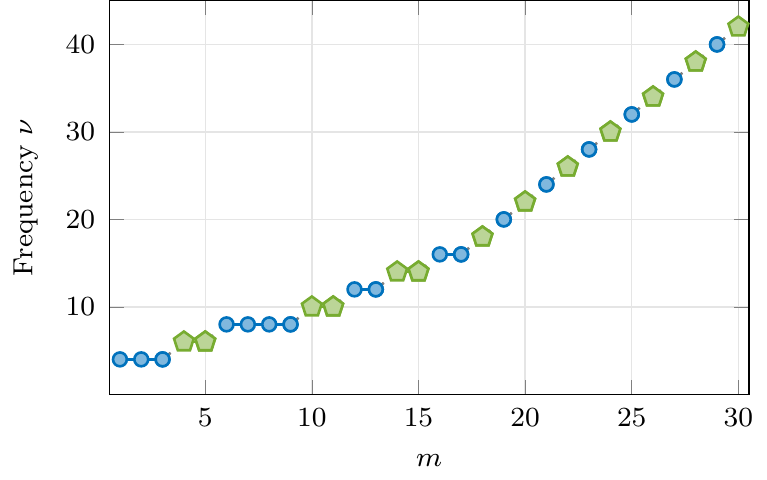}%
    }
\hfill
    \subcaptionbox{number of basis functions vs.\ ASI iteration \label{fig:Exp1-ooo-nb-basisfunction}}{%
            \includegraphics[width=0.47\linewidth]{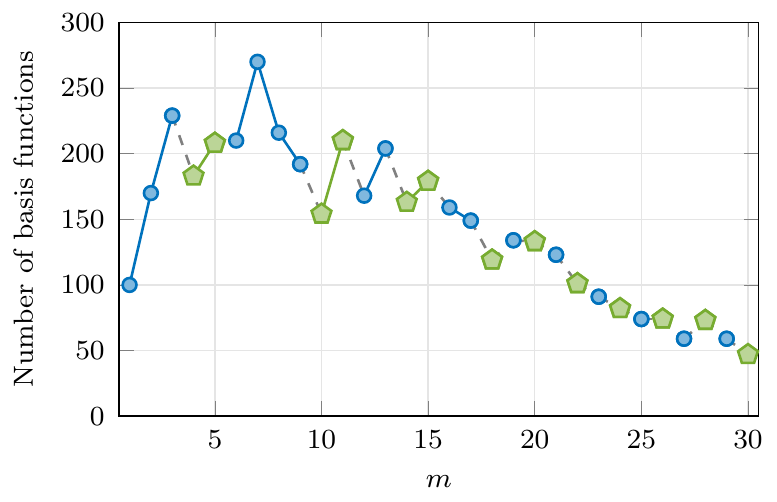}%
    }

    \caption{Five simple geometric inclusions.
    A change in marker indicates a change in frequency.
    }
\end{figure}

\begin{figure}[t]
    \centering
    \def\expFSlambdaA{16.92}
    \def\expFSlambdaB{22.82}
    \def\expFSlambdaC{22.88}
    \def\expFSlambdaD{25.18}
    \def\expFSlambdaE{25.59}
    \def\expFSlambdaF{159.98}
    \subcaptionbox{
    $\psi_1$ \quad ($\lambda_1 \approx \expFSlambdaA$)\label{fig:Exp1-ooo-adaptive-eigenspace-basisfunction-1}}{%
	\includegraphics[width=0.3\linewidth]{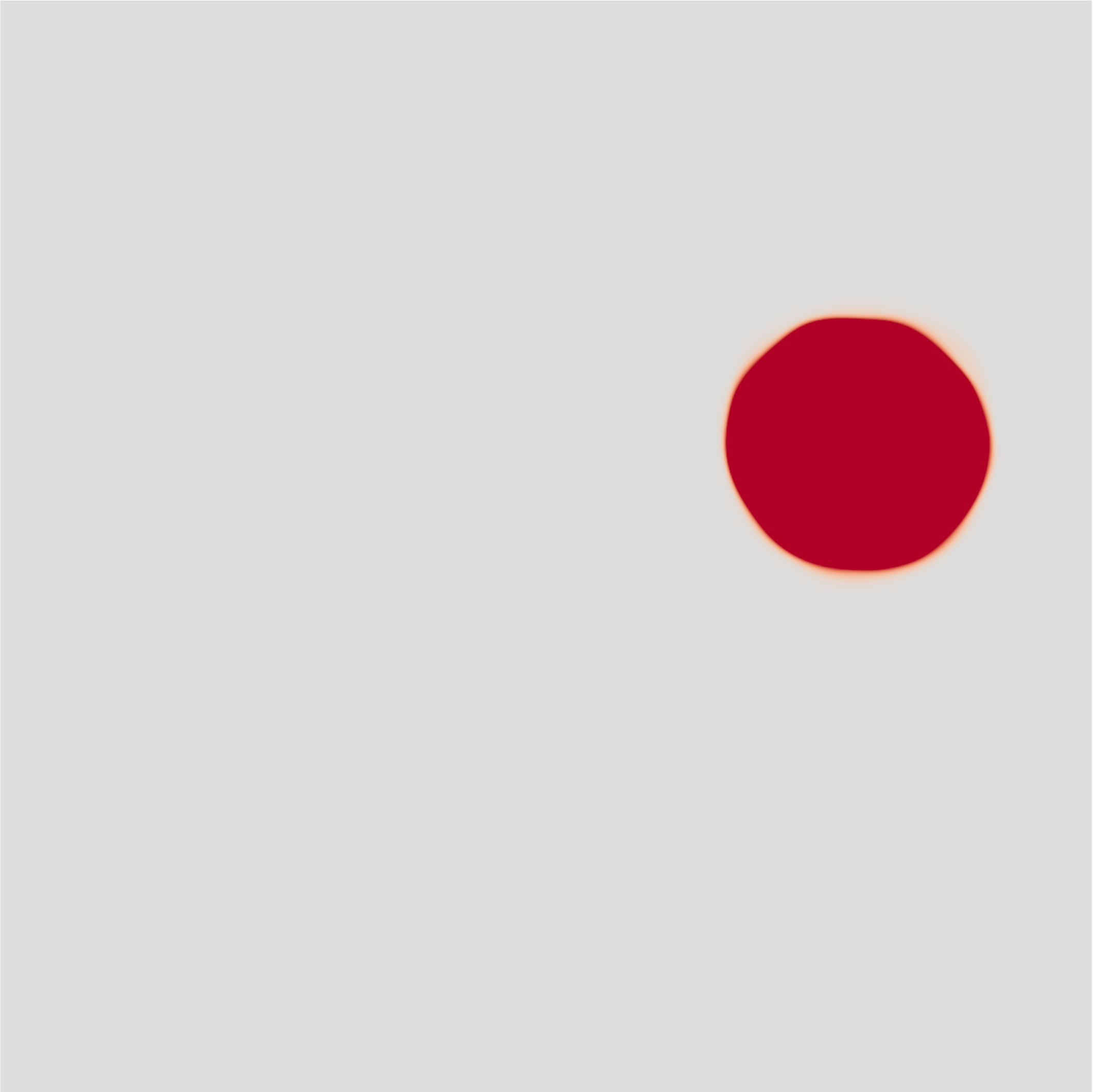}
        }
    \hspace{0.25cm}
    \subcaptionbox{
    $\psi_2$ \quad ($\lambda_2 \approx \expFSlambdaB$)\label{fig:Exp1-ooo-adaptive-eigenspace-basisfunction-2}}{%
	\includegraphics[width=0.3\linewidth]{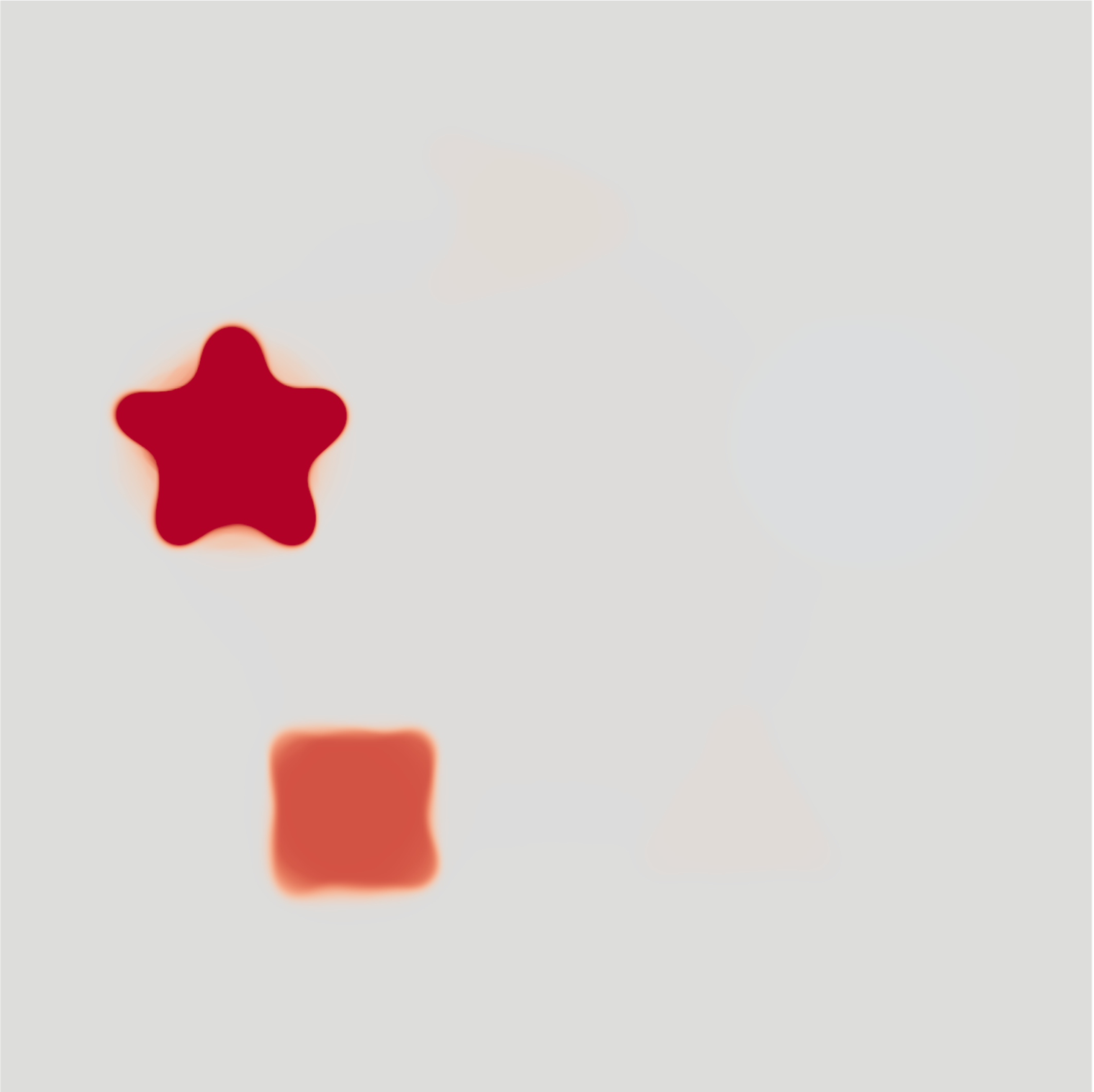}
        }
    \hspace{0.25cm}
    \subcaptionbox{
    $\psi_3$ \quad ($\lambda_3 \approx \expFSlambdaC$)\label{fig:Exp1-ooo-adaptive-eigenspace-basisfunction-3}}{%
	\includegraphics[width=0.3\linewidth]{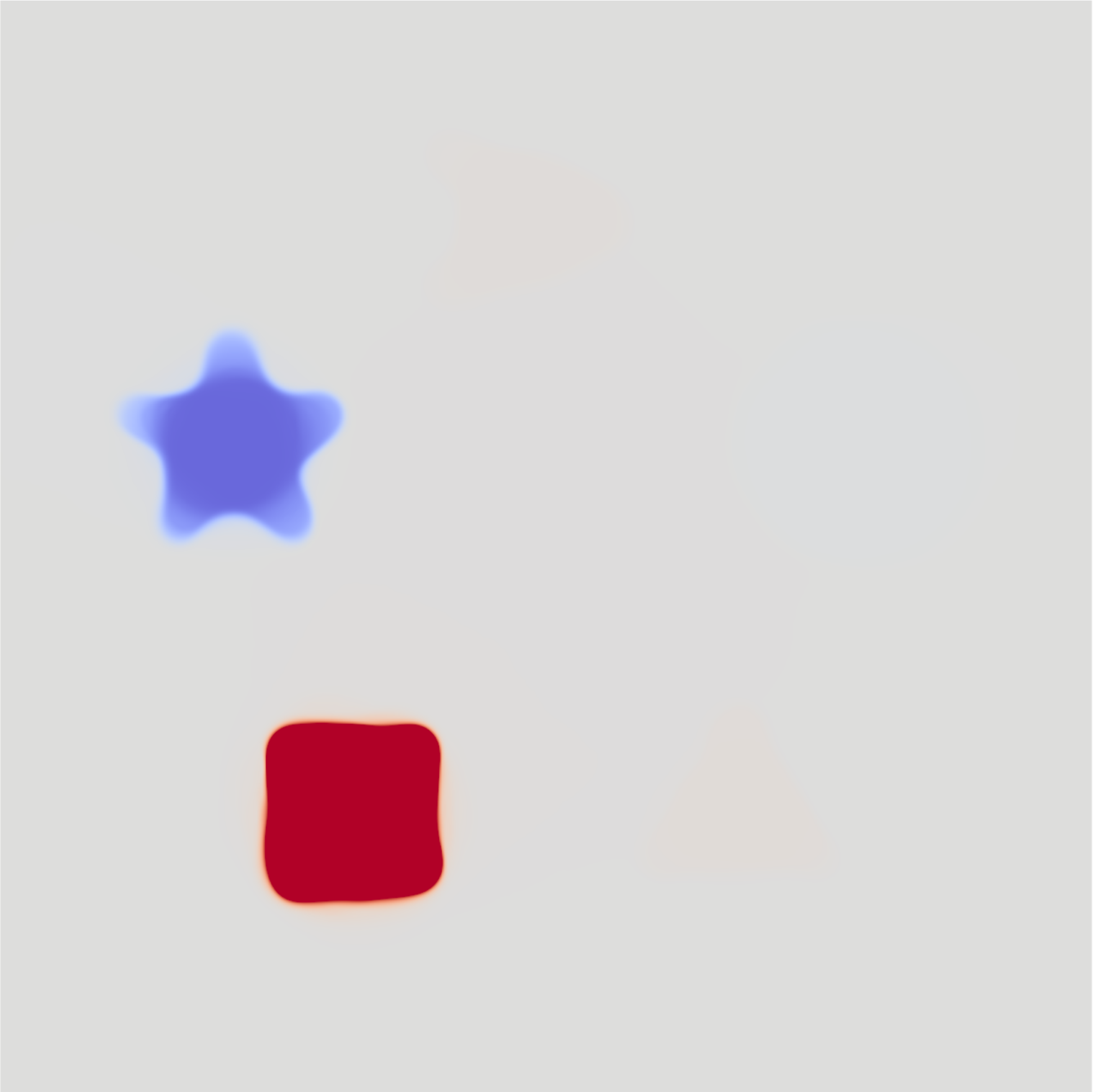}
        }
    \\[2ex]

    \subcaptionbox{
    $\psi_4$ \quad ($\lambda_4 \approx \expFSlambdaD$)\label{fig:Exp1-ooo-adaptive-eigenspace-basisfunction-4}}{%
	\includegraphics[width=0.3\linewidth]{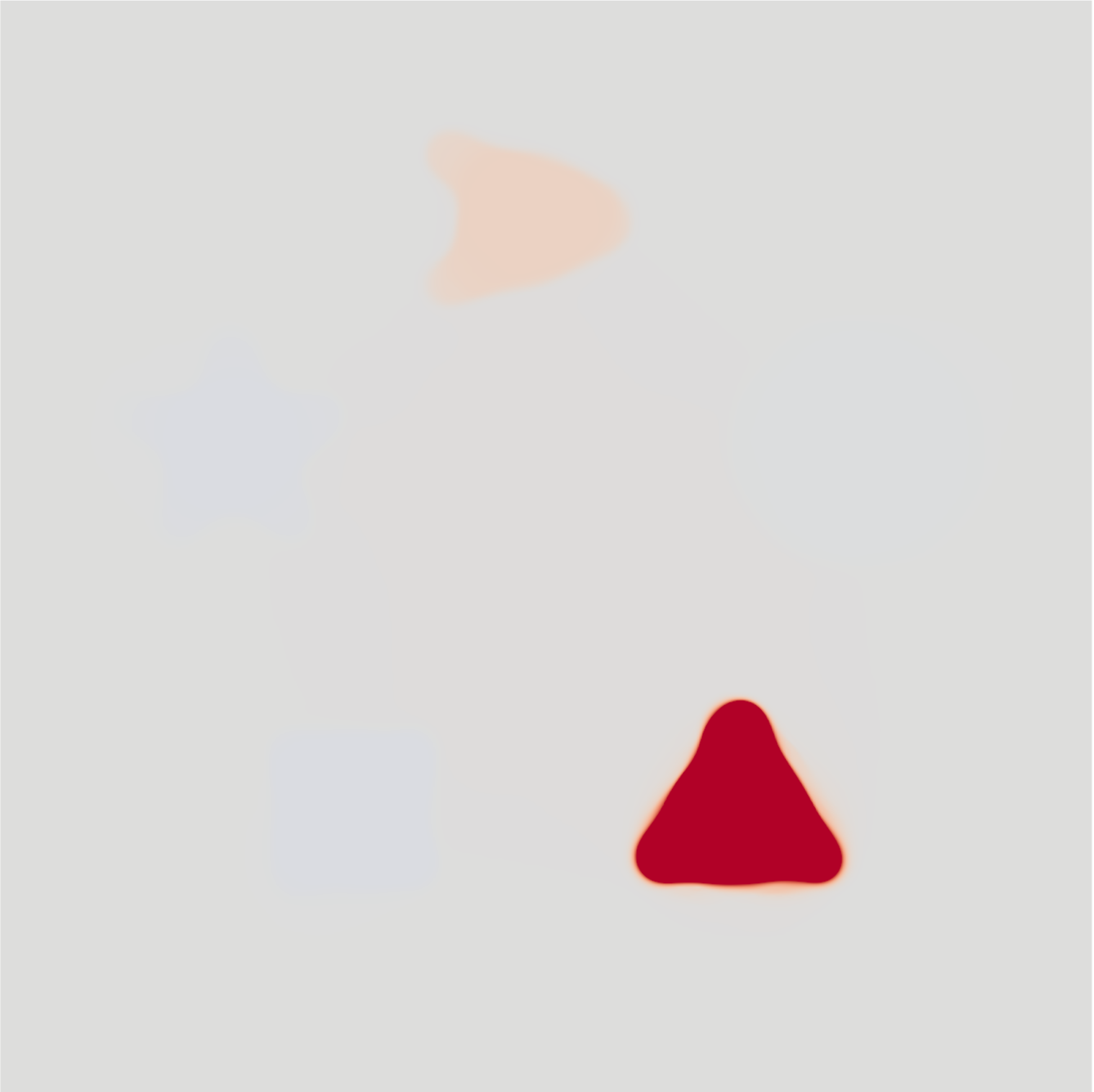}
        }
    \hspace{0.25cm}
    \subcaptionbox{
    $\psi_5$ \quad ($\lambda_5 \approx \expFSlambdaE$)\label{fig:Exp1-ooo-adaptive-eigenspace-basisfunction-5}}{%
	\includegraphics[width=0.3\linewidth]{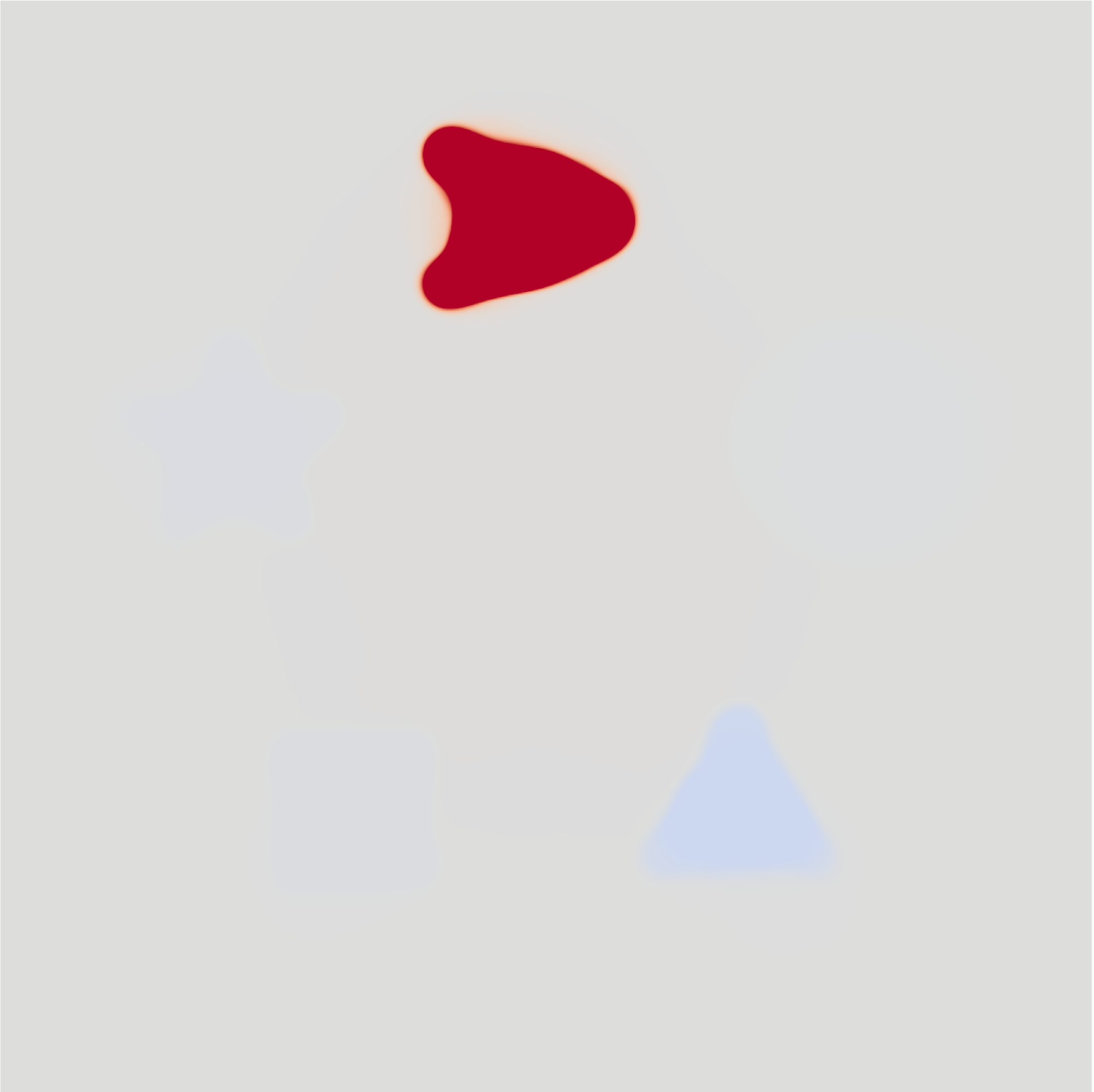}
        }

    \caption{Five simple geometric inclusions:
    the first five basis functions $\psi_1,\ldots,\psi_5$ and their corresponding
    Rayleigh quotients $\lambda_1,\ldots,\lambda_5$,
    at the final iteration of the ASI method.
    }
    \label{fig:Exp1-ooo-adaptive-eigenspace_2}
\end{figure}

We consider inside $\Omega=(0,1) \times(0,1)$ the unknown piecewise constant medium
\[
    u(x) = 2 + 1.4 \chi_{A_1}(x) + 1.1\chi_{A_2}(x) + 1.3\chi_{A_3}(x)+1.5\chi_{A_4}(x)+1.2\chi_{A_5}(x),
\]
where $\chi_{A_i}$ denotes the characteristic function of the set $A_i$, $i=1,\ldots,5$, shown in Figure \ref{fig:Exp1-ooo-adaptive-eigenspace-medium}.
Given noisy observations $\yObs_\ell$, $\ell=1,\ldots,N_S$, on the boundary $\G=\p\Om$ of $\Om$, we seek to reconstruct $u$ inside $\Om$.

To generate the perturbed observations $\yObs_\ell$, $\ell=1,\ldots,N_S$, we add $20 \%$ white noise to the numerical solutions of \deq{eq:HE} with $g=0$ for eight separate smoothed Gaussian point sources $f$, each centered at a location shown in Figure~\ref{fig:Exp1-ooo-adaptive-eigenspace-source}.
Both the observations $\yObs_\ell$ and the scattered wave fields $\y_\ell=y_\ell[w]$, needed for the inversion procedure, are discretized with a $\mathcal{P}^3$-FEM using about $10$ grid points per wavelength.
To avoid any inverse crime, the observations $\yObs_\ell$ are computed on a different, about $30\%$ finer mesh.

We seek our approximation $u_h$ of the exact medium $u$ in the $\mathcal{P}^1$-FE space for a regular, triangular mesh with $320'801$ vertices and $640'000$ elements.
Since $u$ is  assumed known on the boundary $\p\Om$ of $\Om$, where it is constant, we choose as initial guess $u_h^{(0)}$ constant throughout $\Om$; thus, on $\p\Om$, $u_h^{(0)}$ coincides with $u$.
To initialize the algorithm we must also choose $\varphi_0^{(1)}$ and $\Psi^{(1)}=\Span\{\psi_1^{(1)},\ldots,\psi_{J_1}^{(1)}\}$.
To do so, we set $J_1=100$ and solve \deq{eq:phi0BVP_alg_m} and \deq{eq:eigenValProb_alg_m}.
%
%
%
Since $u_h^{(0)}$ is constant, $L_\veps[u_h^{(0)}]$ simply reduces to the negative Laplacian times~$\veps^{-1}$.
For Step \ref{algo:STEP-5} of the ASI Algorithm, we use (\ref{eq:truncation-criterion-lowerbound}) with $\rho_0=0.8$ and $\rho_1=1.2$.


The reconstruction $u_h=u_h^{(m)}$ of the medium at the final ASI iteration, with $m=30$, is shown in Figure~\ref{fig:Exp1-ooo-adaptive-eigenspace-reconstruction}.
Although corners appear smoothed out in the reconstruction, $u_h$ captures well the locations and shapes of the inclusions composing $u$ with a relative $L^2$-error $\|u-u_h\|/\|u\|$ of $4.4\%$.

In Figures \ref{fig:Exp1-ooo-misfit} and \ref{fig:Exp1-ooo-L2error}, we show the misfit $\cJ$ of the reconstructed medium $u_h^{(m)}$ and the relative $L^2$ error at each ASI iteration.
Different markers indicate different frequencies, shown in Figure \ref{fig:Exp1-ooo-frequency}.
Both the misfit and the relative $L^2$ error decrease until the relative error eventually levels around 0.04.
In Figure \ref{fig:Exp1-ooo-nb-basisfunction}, we monitor the dimension $\PsiDim_m$ of the adaptive space $\Psi^{(m)}$ at each ASI iteration.
Starting at $\PsiDim_1=100$, first the dimension $\PsiDim_m$ increases until it peaks around $\PsiDim_m=270$ at iteration $m=7$, and eventually decreases to about 50.

The first five basis functions $\psi_1^{(m)},\ldots,\psi_5^{(m)}$ of the adaptive space $\Psi^{(m)}$ at the final step, $m=30$, shown in Figure \ref{fig:Exp1-ooo-adaptive-eigenspace_2}, illustrate how the adaptive basis captures the span of the first five lowest eigenfunctions in the AS decomposition.
\subsection{Salt-dome model}\label{subsec:SaltDome}
\begin{figure}
    \centering
    \subcaptionbox{unknown profile $u_\delta$\label{fig:Exp2-Pluto-profile}}{%
        \includegraphics[width=0.47\linewidth]{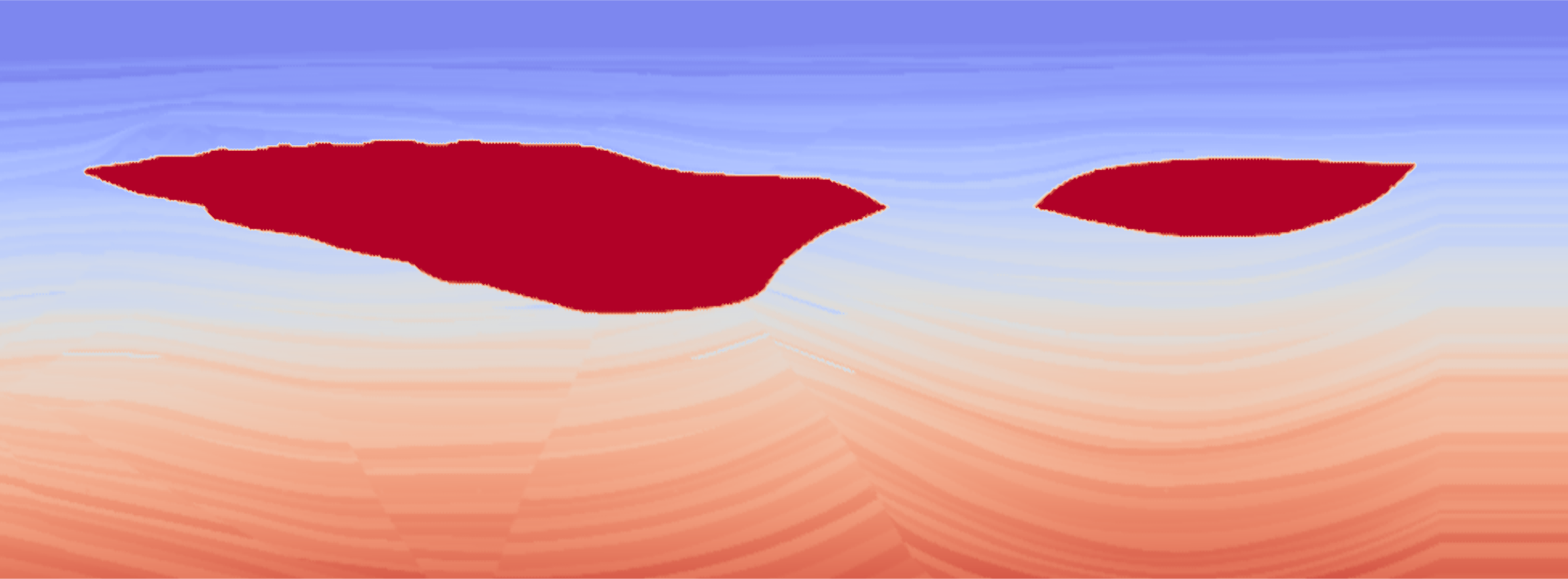}
    }%
\hfill
    \subcaptionbox{initial guess ($L^2$-error of $23.7\%$)\label{fig:Exp2-Pluto-initialguess}}{%
        \includegraphics[width=0.47\linewidth]{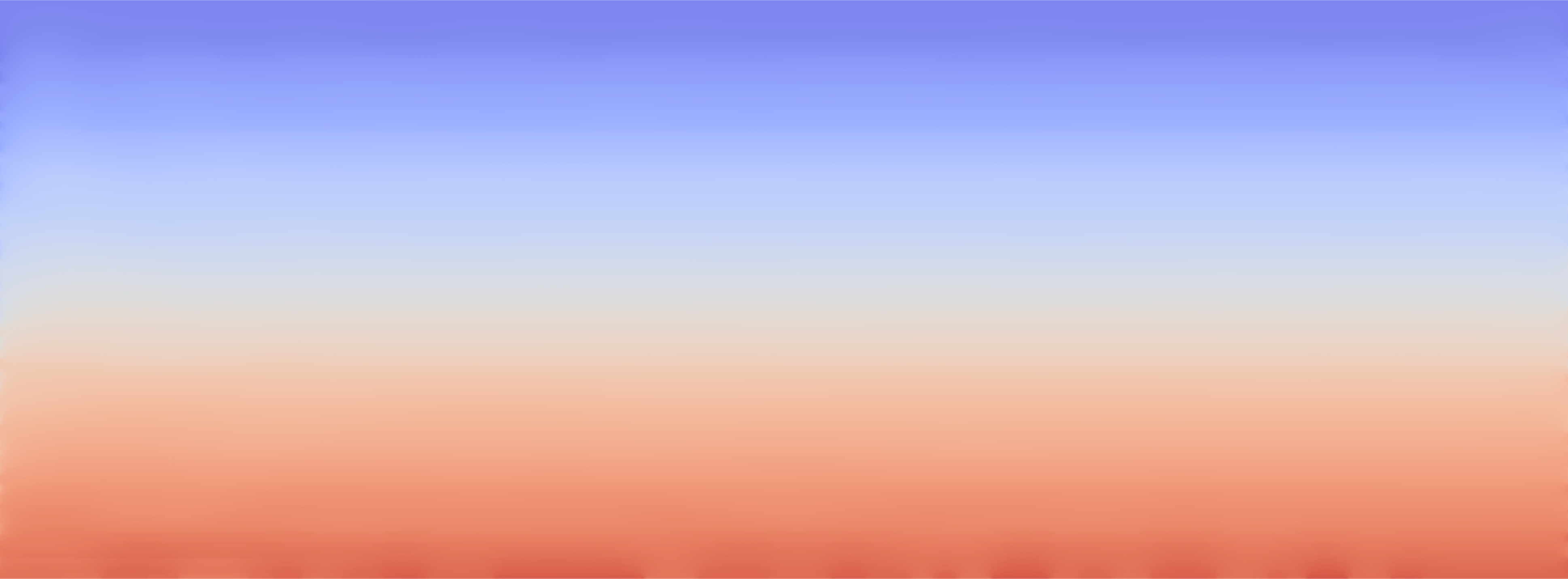}
    }\\[1em]
    \subcaptionbox{ASI reconstruction ($L^2$ error of $5.0\%$)}{%
    \includegraphics[width=0.47\linewidth]{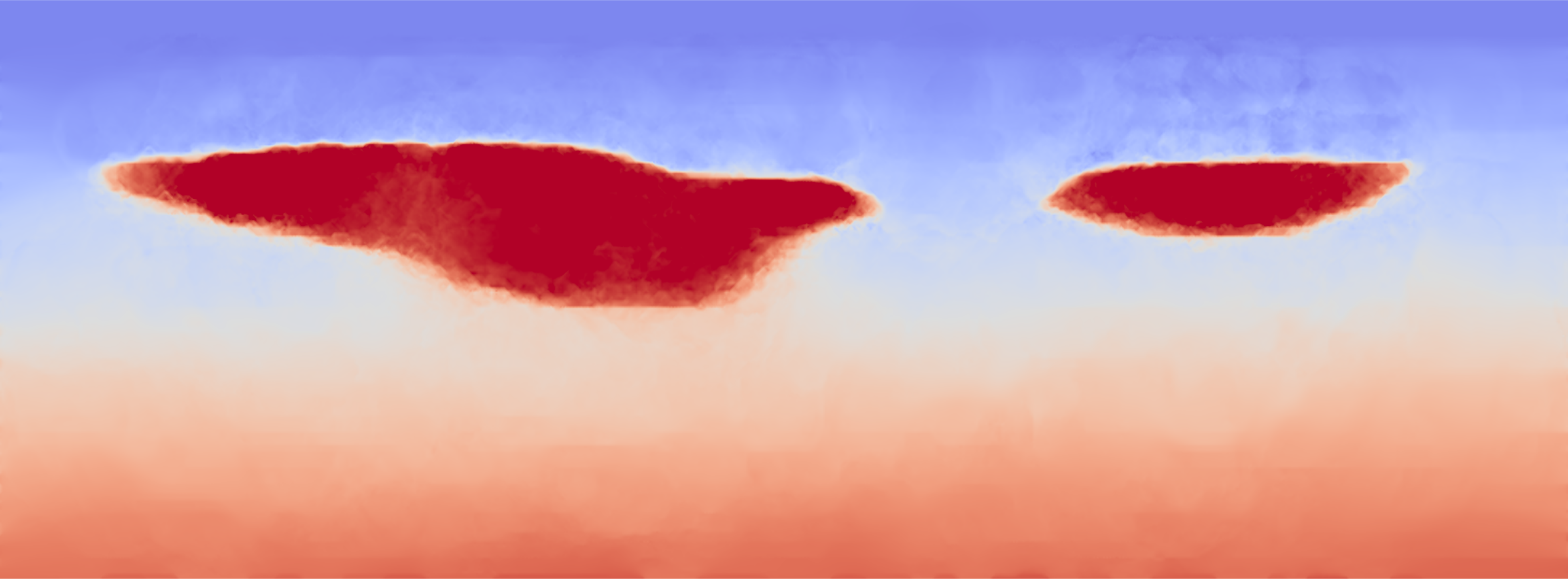}
    }

    \caption{Salt dome model:
    top left:  FE approximation $u_\delta$ of $u$;
    top right: initial guess $u_h^{(0)}$;
    bottom: ASI reconstruction ($\nu=4.0 [Hz]$).
    }
    \label{fig:Exp2-Pluto}
\end{figure}

\begin{figure}[t]
    \centering
    \subcaptionbox{misfit $\mathcal{J}(u_h^{(m)})$ vs.\ ASI iteration\label{fig:Exp2-Pluto-misfit}}{%
\includegraphics[width=0.47\linewidth]{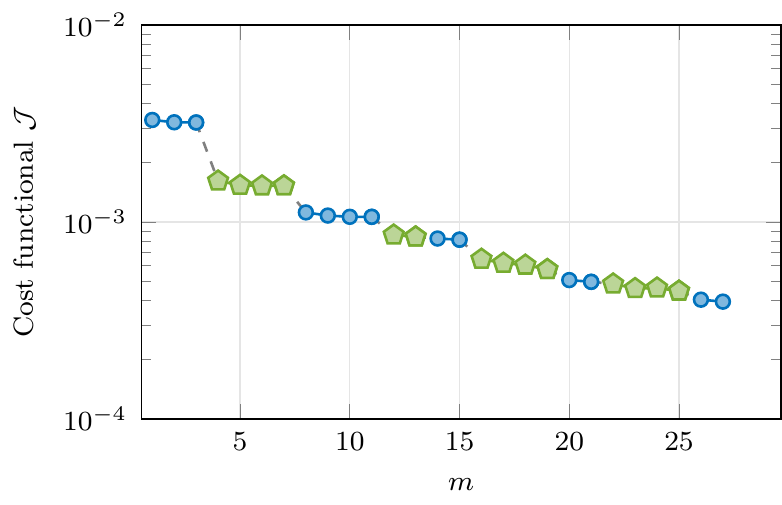}
        }
\hfill
    \subcaptionbox{relative $L^2$ error in $u_h^{(m)}$ vs.\ ASI iteration\label{fig:Exp2-Pluto-L2error}}{%
\includegraphics[width=0.47\linewidth]{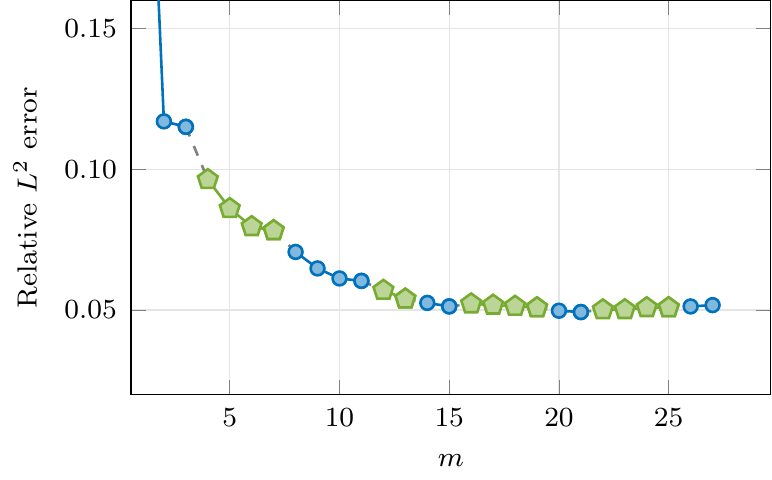}
        }
    \\[2ex]
    \subcaptionbox{frequency $\nu$ vs.\ ASI iteration\label{fig:Exp2-Pluto-frequency}}{%
\includegraphics[width=0.47\linewidth]{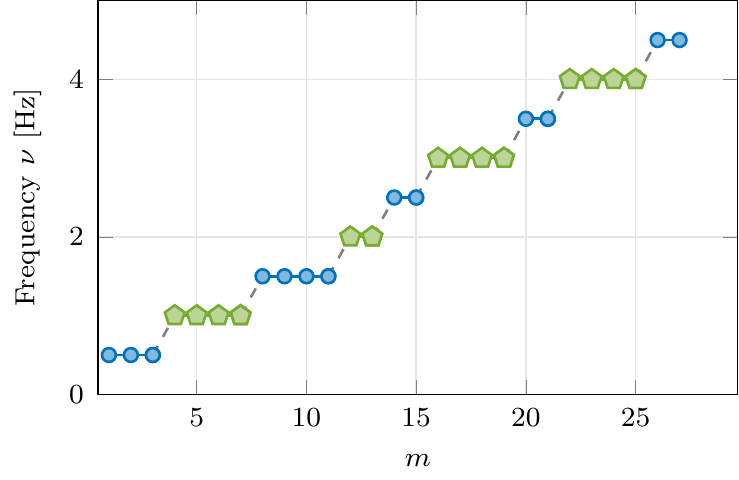}
        }
\hfill
    \subcaptionbox{number of basis functions $\PsiDim_m$ vs.\ ASI iteration\label{fig:Exp2-Pluto-nbfunctions}}{%
\includegraphics[width=0.47\linewidth]{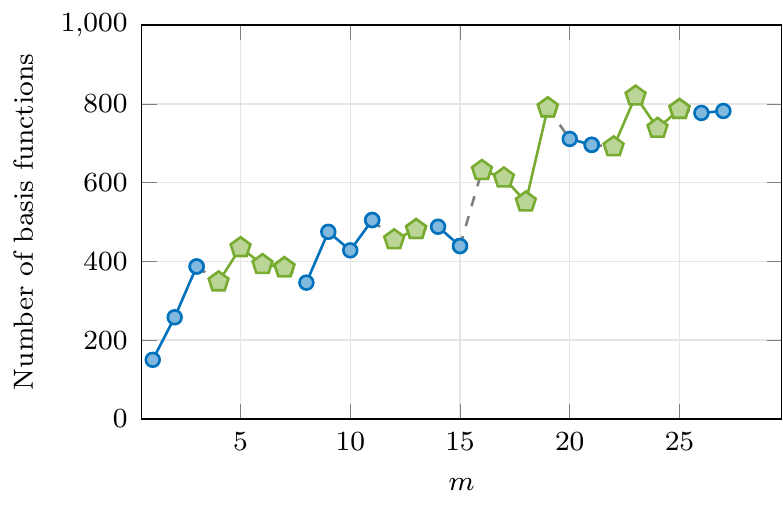}
        }
    \\[2ex]

    \caption{Salt dome model: misfit (a), relative $L^2$ error (b),
    frequency $\nu$ (c), and number of basis functions (d) in each ASI iteration.
    A change in marker indicates a change in frequency $\nu$.
    }
    \label{fig:Exp2-measurements}
\end{figure}

We consider a two-dimensional (Pluto 1.5) salt dome model from geosciences, generated by the ``subsalt multiples attenuation and reduction technology'' (SMAART)~\cite{B2002}.
Hence, we consider (\ref{eq:HE_PDE}) in $\Omega=(0,24.4) \times (-9,0)$ [km] with the squared velocity profile $u(x)=c^2(x)$ [km/s]$^2$ shown in Figure \ref{fig:Exp2-Pluto} (top left).
We impose a first order absorbing boundary condition (\ref{eq:HE_ABC}) on the two lateral and the lower artificial boundaries, and a homogeneous Neumann condition at the top (physical) boundary $\Gamma=\{y=0\}\subset \partial\Omega$.
Synthetic observations at the surface $\Gamma$ are obtained from the wave fields induced by $N_s=100$ source terms located $50$ meters beneath the top surface, each about $245$ meters apart.
Again, we add $20\%$ white noise to the observations.

Now, we apply the ASI Algorithm to reconstruct $u$ from the surface observations.
Here, we seek an approximation $u_h$ of the unknown medium $u$ in a $\mathcal{P}^1$-FE space with $176'349$ vertices and $351'360$ elements.
The initial guess $u^{(0)}$, shown in Figure \ref{fig:Exp2-Pluto}, is generated by extending along the $x$ direction the known Eastern boundary (borehole) data $u$ to the entire computational domain.
Then, starting with $\nu=0.5$ [Hz], the initial guess and the space $\vphi^{(1)}+\Psi^{(1)}$ spanned by the spectral basis of the negative Laplacian operator with $\PsiDim_1=\dim(\Psi^{(1)})=150$, we solve the inverse problem (\ref{eq:optimization-problem}) by using the ASI method while progressively increasing the frequency $\nu$ up to $4.0$ [Hz].
In Step \ref{algo:STEP-5} of the ASI Algorithm, we set $\rho_0=0.9$ and $\rho_1=1.1$ in (\ref{eq:truncation-criterion-lowerbound}).

After $m=27$ ASI iterations, the reconstruction $u_h=u_h^{(m)}$, shown in Fig.\ \ref{fig:Exp2-Pluto}, captures remarkably well the location, size, and inner velocities of the two salt bodies, though not originally present in the initial guess.
In Fig.\ \ref{fig:Exp2-measurements}, we monitor the misfit $\cJ(u_h^{(m)})$,  the dimension $\PsiDim_m$ of the search space $\Psi^{(m)}$, and the relative $L^2$-error at each ASI iteration.
Both the relative $L^2$-error and the misfit $\cJ$ monotonically decrease until the error levels off at about $5\%$.
In Fig.\ \ref{fig:Exp2-Pluto-nbfunctions}, we observe that the number of basis functions of the AS space ${\Psi}^{(m)}$ varies between $150$ and $800$ basis functions.

%
%
%
%
%
%
%
%
%
%
%
%
%
%
%
%
\section{Concluding remarks}
\label{sec:conc}
Starting from the adaptive spectral (AS) decomposition \eqref{eq:wExpansion}--\eqref{eq:eigenValProb}, we have proposed 
a nonlinear optimization method for the solution of inverse medium problems.
Instead of a grid-based discrete representation, the unknown medium $u(x)$
is projected at the $m$-th iteration to a finite-dimensional affine subspace $\vphi_0 + \Psi^{(m)}$, which is updated repeatedly.
The search space $\Psi^{(m)}$ is constructed by combining 
the search space from the previous iteration with 
the "background" $\vphi_0$, which satisfies
\eqref{eq:phi0BVP}, and the first $K$ eigenfunctions $\vphi_j$ of a judicious linear elliptic operator $L_\veps$. Since $L_\veps$ depends itself on the current iterate, $\vphi_0$ and the orthonormal basis of
eigenfunctions $\vphi_1, \dots, \vphi_K$ are updated repeatedly.
Moreover, the resulting ASI (adaptive spectral inversion) algorithm, listed in Section 2.2, 
also adapts "on the fly" the dimension $J_m$ of the search space
by solving a small, quadratically constrained, quadratic minimization problem to filter basis functions while preserving important features. Hence the AS decomposition not only 
substantially reduces the dimension of the search space, but also removes the need for added Tikhonov-type 
regularization. Our numerical results for the ASI method, when applied to time-harmonic inverse scattering
problems governed by the Helmholtz equation, illustrate its accuracy and efficiency even in
the presence of noisy or partial boundary data.  In particular, the ASI method is able to invert
a two-dimensional (Pluto 1.5) salt dome model~\cite{B2002} from noisy surface observations with only a few hundred control variables. 

Our analysis in Section  \ref{sec:AE_analysis} underpins the remarkable accuracy of the AS decomposition 
observed in practice. In particular, Theorem \ref{thm:main} provides rigorous estimates for the approximation $\vphi_0$ of the background and for the eigenfunctions $\vphi_1,\ldots,\vphi_K$ of $L_\veps[u_\del]$, when the medium $u$
consists of $K$ piecewise constant distinct characteristic functions.
Our estimates imply that $\vphi_0$ and the first $K$ eigenfunctions $\vphi_j$ are ``almost'' constant in each connected component away from interfaces.
Hence for small $\del$ and $\veps$, the background is well approximated by $\vphi_0$, whereas
the deviation from the background, $u-\vphi_0$ (or $u_\del-\vphi_0$), may be well approximated in the span of $\vphi_1,\ldots,\vphi_K$. 
The analysis is valid for a wide class of medium-dependent (nonlinear) weight functions $\mu_\veps$ and also for different types of $H^1$-regular approximations $u_\del$ of $u$. In particular, 
they also hold for standard $H^1$-conforming FE approximations $u_\del = u_h$, where $\del$ corresponds
to the underlying mesh size $h$ -- see Corollary \ref{cor:main}.

The operator $L_\veps[u_\del]$ is related to the linearization of the 
total variation functional \cite{GKN2017}.
In Remark \ref{remark:MainThm}, we discuss similarities between Theorem \ref{thm:main} and the nonlinear spectral theory for the TV functional \cite{BCEGM2016,BGM2016,BCN2002}.
The comparison also raises some interesting questions we have not addressed in this work.
Since our estimates are uniform in the parameters $\veps$ and $\del$, they suggest
that a limit argument could be used for estimating the asymptotic behavior of the eigenvalues 
for $\veps, \del\rightarrow 0$.
Interestingly, however, the present estimates are also valid for cases that are specifically excluded by 
the spectral theory for the TV functional \cite{BCN2002}.
More precisely, according to \cite{BCN2002}, if a set $A$ is not convex, or if its  boundary is Lipschitz but not $C^{1,1}$, then its characteristic function $\chi_A$ 
cannot be an eigenfunction for the eigenvalue \deq{TV_EV} of the TV functional.
Our estimates, however, then still hold true, as supported by our numerical tests in Section \ref{sbSec:Numerical example}.

Although we have only considered scalar problems here, the AS decomposition 
can also be used for multiple parameters \cite{GN2019}. As the AS decomposition is independent
of the underlying governing PDE, or any particular choice of misfit functional $\cJ[u]$,  it is probably also useful for other inverse problems with time-dependent or elliptic governing field equations, or possibly for different applications from image analysis.

\bigbreak\bigbreak

\bibliographystyle{abbrv}
\bibliography{main-adaptive-eigenspace.bib,Refs.bib}

\end{document}